\numberwithin{equation}{section}  
\def\b{\boldsymbol}
\def\e{\epsilon}
\def\R{\mathbb{R}}
\def\P{\mathbb{P}}
\def\E{\mathbb{E}}
\def\cL{\mathcal{L}}
\def\cI{\mathcal{I}}
\def\cX{\mathcal{X}}
\def\cI{\mathcal{I}}
\def\bfP{\mathbf{P}}
\newtheorem{theorem}{Theorem}[section]
\newtheorem{lemma}{Lemma}[section]
\newtheorem{proposition}{Proposition}[section]
\newtheorem{remark}{Remark}[section]
\newtheorem{corollary}{Corollary}[section]
\newtheorem{definition}{Definition}[section]
\newtheorem{assumption}{Assumption}[section]
\title{A collision-oriented interacting particle system for Landau-type equations and the molecular chaos}
\author[a,c]{
Kai Du \thanks{Email: kdu@fudan.edu.cn}
}
\author[b,c]{
Lei Li \thanks{Email: leili2010@sjtu.edu.cn}
}
\affil[a]{Shanghai Center for Mathematical Sciences, Fudan University, Shanghai 200438, China
}
\affil[b]{School of Mathematical Sciences, Institute of Natural Sciences, MOE-LSC, Shanghai Jiao Tong University, Shanghai 200240, China}
\affil[c]{Shanghai Artificial Intelligence Laboratory}
\date{}
\begin{document}

\maketitle

\begin{abstract}
We propose a collision-oriented particle system to approximate a class of Landau-type equations. This particle system is formally derived from a particle system with random collisions in the grazing regime, and happens to be a special random batch system with random interaction in the diffusion coefficient. The difference from usual random batch systems with random interaction in the drift is that the batch size has to be $p=2$. We then analyze the convergence rate of the proposed particle system to the Landau-type equations using the tool of relative entropy, assuming that the interaction kernels are regular enough. A key aspect of our approach is the gradient estimates of logarithmic densities, applied to both the Landau-type equations and the particle systems. Compared to existing particle systems for the approximation of Landau-type equations, our proposed system not only offers a more intrinsic reflection of the underlying physics but also reduces the computational cost to $O(N)$ per time step when implemented numerically.
%We propose a collision-oriented particle system for the approximation of a class of Landau type equations. We first derive this particle system formally from the particle systems with random collisions in the grazing regime. We then estimate the convergence rate of this particle system to the Landau type equations using the tool of relative entropy, assuming that the interaction kernels are regular enough. A key technique is the gradient estimate of the logarithmic densities, both for the Landau type equations and for the particle systems. Compared with existing particle systems for the approximation of Landau type equations, the new particle system in this work may intrinsically reflect the physics behind, and the computational cost is $O(N)$ per time step if used a numerical method.
\end{abstract}

\section{Introduction}

We consider in this work the Landau-type equations:
\begin{multline}\label{eq:mckeanlandau}
\partial_t f(t,v)=-\nabla_v\cdot\left( \int_{\R^d} K(v-v_*)f(t, v_*)\,d v_* f(t, v)\right) \\
+\frac{1}{2}\nabla_v^2:\left(\int_{\R^d} A(v-v_*)f(t, v_*)dv_* f(t, v) \right).
\end{multline}
Here, $f$ is the particle density of particles in the velocity space $\R^d$ and the dimension $d\ge 2$. 
The kernel $K$ is the interaction in the drift and $A(\cdot)$ is the interaction kernel for the diffusion coefficient. 
We note that both $K$ and $A$ arise from the collision in the Landau kinetic equation
\begin{gather}
\partial_t f=Q_L(f,f)
=\frac{1}{2}\nabla_v\cdot(A*f\nabla_v f-f A* \nabla_v f),
\end{gather}
where $Q_L$ models the collision in the so-called grazing regime (see Section~\ref{sec:derivation} for more details). The notation
\begin{gather}
A*f(v):=\int_{\R^d} A(v-v')f(v')\,dv'
\end{gather}
represents the convolution in $\R^d$.
It can be verified easily that $K(z)=\nabla\cdot A(z)$ (note that $A$ is symmetric) in the Landau kinetic equation. In this work, we will consider general models without assuming $K=\nabla\cdot A$ and thus call \eqref{eq:mckeanlandau} ``Landau-type'' models. 
We call $K$ the ``drift'' and $A$ the ``diffusion''  just for the convenience.

The Landau kinetic equation is an important model in kinetic theory \cite{villani2002review}, and can be derived formally from the Boltzmann equation and other simplified Boltzmann like models in the grazing regime \cite{villani1998new,guerin2004pointwise,desvillettes1992asymptotics}.
In the Landau equation, the kernel $A$ is given by
\begin{gather}
A(z)=\Lambda |z|^{\gamma+2}(I- \Pi(z))=\Lambda |z|^{\gamma+2}(I-\hat{z}\hat{z}),\quad K=-\Lambda z,
\end{gather}
where $\Lambda$ is a constant related to the collision kernel, $-d\le \gamma\le 1$ and $\hat{z}=z/|z|$. 
If $d=3$, the cases $\gamma\in (0, 1]$ correspond to the hard potentials and $\gamma=0$ corresponds to the Maxwellian potential, while the cases $\gamma\in [-3, 0)$ correspond to the soft potentials. In particular, $\gamma=-d=-3$ corresponds to the Coulomb potential. The well-posedness of the Landau equations for various cases has been jusitfied in a series of works \cite{desvillettes2000spatially,fournier2021stability,villani1998spatially,fournier2009well,guillen2023landau}.

The derivation of Landau equation from interacting particle systems is considered important and challenging in mathematical physics. Moreover, developing suitable particle methods for solving Landau models numerically is also of great significance in plasma physics. Guerin et al. obtained some early works on particle approximations for Landau type models \cite{guerin2003convergence,fontbona2009measurability}. Later, there have been various of results on the particle approximation of Landau equations, including the Maxwellian case \cite{carrapatoso2016propagation}, hard potential cases \cite{fournier2017kac}, and soft potential cases \cite{fournier2016propagation}. These results are all based on the coupling technique. Recently, Carrillo et al. obtained a propagation of chaos results for a Landau-type model using the relative entropy \cite{carrillo2024mean} and for the Maxwellian case \cite{carrillo2024entropy}.  Carrillo et al. also designed a series of numerical particle methods for the Landau equations, see \cite{carrillo2020particle, carrillo2021random,carrillo2023convergence}.

We are interested in proposing a new particle approximation of the Landau-type models \eqref{eq:mckeanlandau} inspired by the microscopic pairwise collisions. The particle system obtained is quite different from those in \cite{fontbona2009measurability,fournier2016propagation,fournier2017kac,carrillo2024mean,carrillo2024entropy}, where the $N$-particle interaction was used in the diffusion. 
In this work, we will consider the following collision-oriented particle system~\ref{alg:rbmparticle} shown in the diagram, to approximate the Landau type equation \eqref{eq:mckeanlandau}. In particular, with a time step $\Delta t$ and time points $t_m=m\Delta t$,  the particles are divided into $N/2$ subgroups at each time point $t_m$, and then they interact within each group for time $\Delta t$ to approximate the statistical effect of the collision. The diffusion for each small time interval is thus only pairwise interaction instead of $N$-particle interaction. The time step $\Delta t$ reflects the frequency of the jumps in the random Poisson process. Clearly, if our particle system is used as a numerical method, the computational cost is only $O(N)$ per time step, which may be a benefit in applications and this would be studied in more detail in the future.  

\begin{algorithm}[H]
\floatname{algorithm}{Particle system}
\caption{Collision-oritented particle system for Landau-type equations}\label{alg:rbmparticle}
\begin{algorithmic}[1]
\STATE Take a time step $\Delta t$ for the mean time duration between two consecutive collisions.
Define $t_m=m\Delta t$.
\STATE At time $t_m$, the particles are divided into $N/2$ subgroups randomly. 
\STATE Consider the time interval $[t_m, t_{m+1})$, the particles evolve as following.
\begin{gather}
dV_i=K(V_i-V_{\theta(i)})\,dt+\sigma(V_i-V_{\theta(i)})dW_i,
\end{gather}
where
\begin{gather}
\sigma(z)\sigma^T(z)=A(z).
\end{gather}
and  $\theta(i)$ is the batchmate of $i$ in the random grouping at $t_m$. If $A=\Lambda |z|^{\gamma+2}(I- \Pi(z))$, one can simply take 
\begin{gather}
\sigma(z)=\sqrt{\Lambda}|z|^{\gamma/2+1}\left(I-\Pi(z) \right).
\end{gather}
\end{algorithmic}
\end{algorithm}

Here, the main difference is that the diffusion is pairwise for each small duration of time interval and then the interaction will be switched. The details for why this particle system is a reasonable approximation will be provided in Section~\ref{sec:derivation}.
Roughly speaking, the statistical effect of the collision between two particles in the grazing regime would be similar to a certain diffusion after a time step. Though the collisions happen at random time points which can be described by some Poisson process, the statistical properties could be approximated by 
interaction that happens at some given time points. 
We remark that the interacting particle system~\ref{alg:rbmparticle} is a special random batch system introduced in \cite{jin2020random,jin2022random,jin2021convergence,jin2022} with $p=2$ applied in the diffusion term. 
This suggests that the random batch interaction has a physical meaning in the Landau type models, not just a numerical approximation. The random batch method was introduced as a numerical method in \cite{jin2020random,jin2021convergence}, where the the particle interaction in the drift term was approximated by the random batch interaction. The value of the batch size $p$ is not very crucial in those models and it has been proved that for any batch size $p\ge 2$, the error goes to zero as $\Delta t\to 0$. We emphasize, however, that the batch size $p=2$ is very crucial for our model as the interaction appears in the diffusion term. In fact,  if one chooses $p\ge 3$ for the random batch approximation in the diffusion, it would not be a consistent approximation for the Landau type equations. 

To see the intuition, let us first fix the groups during $(t_m, t_{m+1}]$ and assume that the particles are independently and identically distributed (i.i.d.) at $t_m$ (there is particle chaos). Then, the law of a specified particle $i$ satisfies for $t\in (t_m, t_{m+1}]$ the following
\begin{gather}\label{eq:joint1}
\partial_t\tilde{f}_i^{\b{\theta}}= -\nabla_{v_i}\cdot \big[K(v_i-v_{\theta(i)}) \tilde{f}_i^{\b{\theta}} \big]+\frac{1}{2}\nabla_{v_i}^2:\big[\sigma\sigma^T(v_i-v_{\theta(i)})\tilde{f}_i^{\b{\theta}}\big].
\end{gather}
With the assumption that the particles are i.i.d. at $t_m$, the correlation would be very weak during $(t_m, t_{m+1}]$. Taking expectation over the random batches $\b{\theta}$ and the other particles, one finds that $\tilde{f}=\E_{\b{\theta}}\tilde{f}_i^{\b{\theta}}$ (we omitted the subindex $i$ due to the symmetry) satisfies
\begin{gather}\label{eq:aprpoximateeqn}
\partial_t\tilde{f}\sim -\nabla_{v_i}\cdot \big[(K(v_i)*\tilde{f})\tilde{f} \big]+\frac{1}{2}\nabla_{i}^2:\big[(\Lambda*f)(v_i)\tilde{f} \big],
\quad \Lambda=\sigma\sigma^T.
\end{gather}
This is the desired Landau type equation.  However, if we apply the random batch approximation with $p\ge 3$ (namely, the group $\theta(i)$ that contains $i$ has $p\ge 3$ particles), the approximation is not correct. To see this, we focus on the diffusion term only and the dynamics could be given by
\begin{gather}
dV_i=\frac{1}{p-1}\sum_{j\in \theta(i), j\neq i}\sigma(V_i-V_{j})dW_i.
\end{gather}
Then, the equation during the short time interval would be
\begin{gather*}
\partial_t\tilde{f}_i^{\b{\theta}}= \frac{1}{2}\nabla_{v_i}^2:\bigg[\Big(\frac{1}{p-1}\sum_{j\in \theta(i), j\neq i}\sigma(v_i-v_j)\Big)\Big(\frac{1}{p-1}\sum_{j\in \theta(i), j\neq i}\sigma(v_i-v_j)\Big)^T\tilde{f}_i^{\b{\theta}}\bigg].
\end{gather*}
If we take the expectation, we cannot obtain the desired form of Landau type equation due to the nonlinearity. 
This effect of the interplay between diffusion and random batch approximation has been investigated in  \cite{guo2024convergence} already, where the McKean equation of the following type has been studied
\begin{gather}
dX=K*\rho(X)\,dt+\sigma*\rho(X)\, dW.
\end{gather}
Here $\rho=\mathcal{L}(X)$ is the law of $X$, and the corresponding equation for the law reads
\begin{gather}
\partial_t \rho=-\nabla\cdot( (K*\rho)(x) \rho(t, x))
+\frac{1}{2}\nabla^2:((\sigma*\rho)(\sigma*\rho)^T \rho(t, x)).
\end{gather}
To approximate this type of equation, one needs to take the batch size $p$ to be big enough ($p\ge \Delta t^{-\beta}$ for some $\beta>0$) \cite{guo2024convergence}.

In this paper, we aim to study the approximation error of the new particle system~\ref{alg:rbmparticle} to Landau type equations in a rigorous framework, assuming the drifts and diffusion terms possess some good properties. The tool we adopt is the relative entropy, which has been proved powerful to study the mean-field limit and propagation of chaos \cite{jabinquantitative,jabin2016mean,huang2024mean}. On one side, we aim to establish the propagation of chaos of the system so that it would be helpful to understand the molecular chaos assumption in the derivation of Landau equations. On the other side, we will also establish the discretization error in terms of the time step $\Delta t$. 
The work here could bring new understanding to the microscopic models of Landau type kinetic models, and inspire new numerical methods.

The rest of the paper is organized as follows. In Section~\ref{sec:derivation}, we explain in detail how the particle system we consider is proposed based on the collision model in the grazing regime. From the derivation, we will see clearly that the random batch interaction in the diffusion term in this collision-oriented particle system is not constructed due to the numerical approximation. Instead, it is connected closely to the underlying physical significance. 
In Section~\ref{sec:mainresults}, we present our main results about the estimates for the approximation error of the particle system to the Landau type equations, and then make some discussions about the molecular chaos. 
Section~\ref{sec:meanfieldproperty} and Section~\ref{sec:Npardensity} are devoted to some key estimates needed for the proof. In particular, we establish a Li--Yau type gradient estimate for the mean-field Landau type equation in Section~\ref{sec:meanfieldproperty} and an integral version of the estimates for the derivatives of the logarithmic density of the particle system in Section~\ref{sec:Npardensity}.  We finish the proof by closing some loose ends in Section~\ref{sec:looseends}
and conclude the work in Section~\ref{sec:conclusion}.

\section{A formal derivation for the collision-oriented particle system}\label{sec:derivation}

In this section, we illustrate why the particle system~\ref{alg:rbmparticle} considered in this paper is meaningful.  In particular, we check the grazing limit in the particle level of the Boltzmann type collision operator for the Maxwell case as discussed in \cite{fournier2000existence,
guerin2004pointwise}. In the grazing regime, the leading terms naturally yield diffusion operator considered in this paper and thus justifies our model.

\subsection{A particle system in the velocity space for Kac equation}

Consider the Kac equation in 2D, which is simplified spatially homogeneous Boltzmann equation, for Maxwellian particles 
\begin{gather}\label{eq:kacequation}
\begin{split}
\partial_t f& =Q(f, f)=\int_{v_*\in \R^2}
\int_{-\pi}^{\pi}[f(t, v')f(t, v_*')-f(t, v)f(t, v_*)]\beta(\theta)d\theta dv_* \\
&=:Q_g(f,f)-Q_{\ell}(f,f).
\end{split}
\end{gather}
Here, $\beta$ is the collision kernel and only depends on $\theta$, which is the angle of rotation due to collision. Moreover, it is an even function of $\theta$
\begin{gather}
\beta(\theta)=\beta(-\theta).
\end{gather}

A particle with velocity $v$ collides with a particle with velocity $v_*$,  and $v$ is then changed into $v'$ while $v_*$ is changed to $v_*'$ after collision.
For elastic collision, both the total momentum and kinetic energy are conserved. Specifically, given the angle change of the relative velocity $\theta$, one has 
\begin{gather}\label{eq:velocityaftercollision}
\begin{split}
& v'=\frac{1}{2}(v+v_*)+R(\theta)\frac{v-v_*}{2}
=v+\frac{1}{2}(R(\theta)-I)(v-v_*)=:c(v, v_*,\theta),\\
& v_*'=\frac{1}{2}(v+v_*)-R(\theta)\frac{v-v_*}{2}
=v_*-\frac{1}{2}(R(\theta)-I)(v-v_*)=:c_*(v, v_*, \theta).
\end{split}
\end{gather}
The matrix $R(\theta)$ is the standard rotation matrix
\begin{gather}
R(\theta)
=\begin{pmatrix}
\cos\theta & -\sin\theta\\
\sin\theta &\cos\theta
\end{pmatrix},
\end{gather}
 and one has
\begin{gather}
v'-v_*'=R(\theta)(v-v_*).
\end{gather}
The loss of density due to collision at $v$ is thus given by 
\begin{gather}
\bar{Q}_{\ell}(v)=\iint_{\theta, v_*} f^{(2)}(t, v, v_*)\beta(\theta) d\theta dv_* ,
\end{gather}
where $f^{(2)}$ is the two marginal distribution of the particles and the overall loss is thus obtained by integrating over all possible $v_*$ and $\theta$. One often assumes the molecular chaos, which means the two-marginal distribution to be written as the product of one marginals: 
\begin{gather}\label{eq:molecularchaos}
f^{(2)}(t, v_1, v_2)= f^{(1)}(t, v_1)f^{(1)}(t, v_2).
\end{gather}
Then, $\bar{Q}_{\ell}$ reduces to $Q_{\ell}$, the second term on the right-hand side of \eqref{eq:kacequation}.
Reversely, when particles with velocity $v'$ and $v_*'$ given in \eqref{eq:velocityaftercollision} collide and the angle is $-\theta$, the resulted velocities will be $v$ and $v_*$. The gain of the density is thus given by
\begin{gather}
\begin{split}
\bar{Q}_g(v)&=\iiint f^{(2)}(t, v', v_*')\beta(-\theta)\delta(v-c(v', v_*', -\theta)) d\theta dv' dv_*'\\
&=\iint_{\theta, v_*} f^{(2)}(t, v', v_*') \beta(\theta) dv_*.
\end{split}
\end{gather}
The first equality is due to the physical meaning of $\bar{Q}_g$ as one should consider every possibility of $v'$ and $v_*'$ that resulted in $c(v', v_*', -\theta)=v$. The coefficient of $\delta(v-c(v', v_*', -\theta))$ is determined by the fact that the integral of $\bar{Q}_g(v)$ over $v$
should be the total gain given by $\iiint f^{(2)}(t, v', v_*')\beta(-\theta)d\theta dv' dv_*'$. For the second equality,  one does the change of variables $w=c(v', v_*', -\theta), w_*=c_*(v', v_*', -\theta)$ and the Jacobian for $dv'dv_*'\to dw dw_*$ is $1$. 
Then, change the integral variable $w_*$ to $v_*$ gives the second equality. One again, assuming \eqref{eq:molecularchaos}, one obtains $Q_g$.

Due to the interpretation of the gain and loss terms, it is easily to  obtain a particle system in the velocity space for the Kac equation, similar to the classical Monte Carlo methods for kinetic equations \cite{guerin2003convergence,bird1976molecular}. The Direct Simulation Monte Carlo (DSMC) for Boltzmann equations in \cite{bird1976molecular} has indeed gained wide applications.
\begin{algorithm}[H]
\floatname{algorithm}{Particle system}
\caption{Collision model for Kac's equation}\label{alg:particlekac}
\begin{algorithmic}[1]
\STATE Take $N$ particles (in velocity space) with labels from $f_0$.
\STATE Generate a sequence of Poisson jumps with strength $N$.
\STATE At each Poisson jump, choose a pair of particles with labels $v$ and $v_*$. Choose $\theta$ from distribution $\beta(\cdot)$. The new labels of the two particles are given by \eqref{eq:velocityaftercollision} or \eqref{eq:velocityaftercollision2}.
\end{algorithmic}
\end{algorithm}

Here, the strength of the Poisson jumps is $N$ so that there would be $O(N)$ collisions during a time interval of length $O(1)$. That means, an observed particle is likely to be involved in one collision.

Note that this particle system is not totally equivalent to the Kac equation because the particles are not independent for finite $N$.  In fact, the equivalence of the particle system to the Landau equation does not hold if $N$ is not big because the molecular chaos \eqref{eq:molecularchaos}  does not hold. However, one expects it to hold as $N\to\infty$. The rigorous justification of chaos is in fact very important and the central point for the mean field limit \cite{jabin2014}.

\subsection{The grazing regime and the particle systems}

In the grazing regime, the small angle collisions are more important. Consider that the collision kernel rewritten as
\begin{gather}
\beta^{\e}(\theta)\sim \frac{1}{\e}g\left(\frac{\theta}{\e} \right),
\end{gather}
where $\e\ll 1$ and $g(\cdot)$ is a function that decays fast enough. Here, we use the notation $\sim$ to indicate that $g$ is actually is the limit of $\e \beta^{\e}(\e z)$ as $\e\to 0$.
We take a time scale as $1/\e^2$ so that $\tilde{t}=\e^2 t$. Using the scaled time $\tilde{t}$, a typical collision happens under a Poisson process with strength $N/(2\e^2)$: we have $N$ particles in the velocity space, and with a Poisson clock with strength $N/(2\e^2)$, two particles are chosen and then collide. 
 See the process illustrated in \ref{alg:particlegrazing} for the particle system with this grazing scaling.  Intuitively, this means that a typical collision happens every $2\e^2/N$ time, and during a time step $\Delta \tilde{t}=\epsilon^2$, typically $N/2$ collisions would happen. 
\begin{remark}
Note that we do not use the rescaled kernel $\frac{1}{\e^3}g(\frac{\theta}{\e})$ as in \cite{fournier2000existence,guerin2004pointwise} because we would like the integral of this rescaled kernel is a constant (i.e., $1$) so that it can be interpretated as a probability density. The extra $\e^2$ will naturally be viewed as the time step, and thus a rescaling of time using $\e^2$ will recover the classical Landau operator.
\end{remark}

\begin{algorithm}[H]
\floatname{algorithm}{Particle system}
\caption{Collision model for Kac's equation with grazing scalings}\label{alg:particlegrazing}
\begin{algorithmic}[1]
\STATE Take $N$ particles (in velocity space) with labels from $f_0$.
\STATE Generate a sequence of Poisson jumps with strength $N/(2\e^2)$.
\STATE At each Poisson jump, choose a pair of particles with labels $v$ and $v_*$. Choose $\theta$ from distribution $\beta^{\e}(\theta)=\e^{-1}g(\e^{-1}\theta)$. The new labels of the two particles are given by \eqref{eq:velocityaftercollision}.
\end{algorithmic}
\end{algorithm}

In this grazing regime, consider $v$ and $v_*$ are given. The velocity is changed from $v$ to $v'$ with $\theta$ being drawn from $\beta^{\e}$ randomly. The mean drift is thus given by
\begin{gather}
d(v, v_*):=\langle v'-v\rangle_{\theta}=\int_{-\pi}^{\pi} \frac{1}{2}(R(\theta)-I)(v-v_*)\beta^{\e}(\theta)f(t, v_*)d\theta.
\end{gather}
Note that $\beta^{\e}(\cdot)$ is an even function, and then one finds that
\begin{gather}
d(v, v_*)=\left(-\frac{1}{2}\int_{-\pi}^{\pi}(1-\cos\theta)\beta^{\e}(\theta)d\theta\right)(v-v_*),
\end{gather}
and
\begin{gather}
\frac{1}{2}\int_{-\pi}^{\pi}(1-\cos\theta)\beta^{\e}(\theta)d\theta
=\frac{\epsilon^2}{2}\int_0^{\infty}z^2 g(z)dz+O(\epsilon^4).
\end{gather}
Hence, 
\begin{gather}\label{eq:meanvelocity}
d(v, v_*)=-\Lambda (v-v_*)\epsilon^2+O(\epsilon^4),
\end{gather}
Here, $\Lambda=\frac{1}{2}\int_0^{\infty}z^2 g(z)dz$.

The second moment for the velocity change is similarly given by
\begin{gather}
\begin{split}
\langle (v'-v) \otimes (v'-v)\rangle_{\theta}
&=\frac{1}{4}\int [(R(\theta)-I)(v-v_*)]^{\otimes 2}\beta^{\e}(\theta)\,d\theta\\
&=\frac{1}{4}\e^2 [J(v-v_*)]^{\otimes 2}\int_{-\infty}^{\infty} z^2 g(z)\,dz
+O(\e^4),
\end{split}
\end{gather}
where
\begin{gather}
J=\begin{pmatrix} 
0 & -1 \\
1 & 0
\end{pmatrix}.
\end{gather}
Since $(J(v-v_*))^{\otimes 2}=|v-v_*|^2(I-\Pi)$ with $\Pi=\frac{(v-v_*)^{\otimes 2}}{|v-v_*|^2}$, one thus has
\begin{gather}
\langle (v'-v) \otimes (v'-v)\rangle=\e^2|v-v_*|^2(I-\Pi)\Lambda
+O(\e^4)=\e^2 A(v-v_*)+O(\e^4),
\end{gather}
where
\begin{gather}\label{eq:CoefficientA}
A(z):=\Lambda |z|^2(I-\Pi(z)),\quad \Pi(z)=\hat{z}\hat{z}.
\end{gather}
From this perspective, due to the randomness of $\theta$ sampled from $\beta^{\e}(\theta)$, the collision behaves in the leading term is like a Markov process with generator
\[
\e^2(-\Lambda (v-v_*)\cdot\nabla+\frac{1}{2}\Lambda |v-v_*|^2(I-\Pi(v-v_*)):\nabla^2)+O(\e^4).
\]

\begin{remark}
The above computation is often argued using the weak formulation for Kac equation. Taking a test function $\phi(v)$, one has
\begin{gather}\label{eq:Qweak}
\langle Q(f, f), \phi\rangle
=\int_{v\in \R^2}\int_{v_*\in \R^2}\int_{-\pi}^{\pi}f(t, v)f(t, v_*)[\phi(v')-\phi(v)]d\theta  dv_* dv
\end{gather}
This can be obtained by the evenness of $\beta^{\e}$ and change of variables
\begin{gather*}
\int_v\int_{v_*}
\int_{-\pi}^{\pi}f(t, v')f(t, v_*')\beta^{\e}(\theta)\phi(v)d\theta dv_*dv
=\int_{v}\int_{v_*}\int_{-\pi}^{\pi}f(t, v)f(t, v_*)\phi(v')\beta^{\e}(\theta)d\theta dv_*dv .
\end{gather*}
The weak formulation \eqref{eq:Qweak} clearly reflects the fact that $v$ is turned into velocity $v'$ after collision. One can decompose to split the mean drift part to obtain
\[
\phi(v')-\phi(v)=[\phi(v')-\phi(v)-(v'-v)\cdot\nabla\phi(v)]
-\nabla\phi(v)\cdot \frac{1}{2}(I-R(\theta))\cdot(v-v_*),
\]
In the grazing regime, $\beta(\theta)\to \beta^{\e}(\theta)=\frac{1}{\e}\beta(\theta/\e)$. One can do Taylor expansion and repeat the computation above to find that
\[
\langle Q(f,f), \phi\rangle=\e^2\iint_{\R^2\times\R^2}(-\Lambda (v-v_*)\cdot\nabla\phi+\frac{1}{2}\Lambda |z|^2(I-\hat{z}\hat{z}):\nabla^2\phi)f(t, v)f(t, v_*)dv dv_*+O(\e^4).
\]
\end{remark}

The coefficient in \eqref{eq:CoefficientA} is degenerate. For technical reasons, we will also consider coefficients of the form
\begin{gather}
A=\Lambda |z|^2(I-\Pi(z))+\sigma^2.
\end{gather}
This corresponds to collisions with some injected noise:
\begin{gather}\label{eq:velocityaftercollision2}
\begin{split}
& v'=v+\frac{1}{2}(R(\theta)-I)(v-v_*)+\xi,\\
& v_*'=v_*-\frac{1}{2}(R(\theta)-I)(v-v_*)+\xi_*.
\end{split}
\end{gather}
The injected noise $\xi$ could be understood as the momentum input due to random environment. For example, in the plasma simulations, there may be photons exchange between the particles and the environment. Or, if we consider ions in the continuum background of electrons. 
We assume that $\E \xi=0$ and
\begin{gather}
\E(|\xi|^2)=\e^2 \sigma^2+O(\e^4).
\end{gather}
for the grazing regime. With the same calculation, due to the mean zero assumption on $\xi$,  it is easy to find that
$d(v, v_*)$ is given by the same  expression \eqref{eq:meanvelocity}, and
\begin{gather}
\begin{split}
\langle (v'-v) \otimes (v'-v)\rangle_{\theta}
&=\frac{1}{4}\e^2 [J(v-v_*)]^{\otimes 2}\int_{-\pi}^{\pi} z^2 g(z)\,dz
+\E(|\xi|^2)+O(\e^4),
\end{split}
\end{gather}
This gives the desired $A$ and the relation $K=\nabla\cdot A$ still holds.

In the grazing regime,  the collision is like the discretization of a diffusion operator with time step $\e^2$. In this sense, the scaled time $\tilde{t}$ is suitable and we take $\Delta \tilde{t}=\epsilon^2$ (correspondingly $\Delta t=1$). For such a time step, we can thus approximate the collision with the Euler--Maruyama discretization of a diffusion process, and further with the time continuous diffusion process with time length $\Delta \tilde{t}=\e^2$.  Due to the strength of the Poisson process, during time step $\Delta \tilde{t}$, any focused particle is likely to be involved in one collision and typically $N/2$ collisions would happen. 
To make the particle simulation more efficient, we take an ideal approximation: for each duration of time $\Delta t=\e^2$, we divide the particles into $N/2$ groups. They evolve according to the diffusion process within each group. That means we use the particle system indicated in Particle system~\ref{alg:rbmlandau}.
\begin{algorithm}[H]
\floatname{algorithm}{Particle system}
\caption{Collision oriented particle system for Landau equation}\label{alg:rbmlandau}
\begin{algorithmic}[1]
\STATE Every time step $\Delta \tilde{t}=\e^2$. The $N$ particles are divided into $N/2$ groups randomly.
\STATE With each group, two particles evolve with the SDE for time step $\Delta t=\e^2$:
\[
dV_i=-\Lambda(V_i-V_{\theta(i)})\,dt+\sqrt{A(V_i-V_{\theta(i)})}\,dW_i.
\]
\end{algorithmic}
\end{algorithm}
If we relax the constraint $K=\nabla\cdot A$ and allow general $K, A$, we then have particle system~\ref{alg:rbmparticle}. As we mentioned, this particle system is exactly a special random batch particle system studied in \cite{jin2020random}, in the diffusion coefficient.
This indicates that the random batch system is not just a numerical algorithm and it has some physical significance in some problems.

\begin{remark}
Here, the Brownian motions $W_i$ are assumed independent.  During the time interval $(t_m, t_{m+1})$, particle $i$ and its batchmate clearly are related due to the conservation of momentum. In this sense, one should have $W_{\theta(i)}=-W_i$ for $t\in (t_m, t_{m+1})$. However, if the particle number is large, this correlation is expected to have little effect.
\end{remark}

As commented above already, the Kac's equation needs the chaos assumption that $f^{(2)}(t, v_1, v_2)\approx f^{(1)}(t, v_1)f^{(1)}(t, v_2)$. 
Using the collision model~\ref{alg:particlegrazing}, the molecular chaos is hard to justify directly, and thus the approximation of the particle system to the Landau equation is not clear.
However, using  particle system~\ref{alg:rbmlandau}, there is hope to justify the molecular chaos for regular $A$ because we may use the available tools for propagation of chaos to establish the asymptotic ``molecular chaos'' so that we can establish the convergence of the particle system to the Landau type equations.

We would also like to point out that  the difference of Algorithm \ref{alg:rbmlandau} from Algorithm \ref{alg:particlegrazing} seems not that essential as the numerical results for the two versions of RBM algorithms (RBM with and without replacement) in \cite{jin2020random} suggested. Hence, we believe that if the propagation of chaos can be established for system~\ref{alg:rbmlandau} corresponding to the real Landau model, the eventual molecular chaos result for the original collision model would not be very far.

\section{Approximation error and propagation of chaos}\label{sec:mainresults}

In this section, we present our main results about the approximation error of our particle system~\ref{alg:rbmparticle} to the Landau type equations, and make some discussions. Then, we provide the proof of the main results by taking some important estimates for granted, which will be proved in later sections. 

\subsection{Assumptions, main results and discussions}

Let us first list out our assumptions on $K$ and $A$.
\begin{assumption}\label{ass:KA1}
The kernel $K$ and $A$ are bounded and smooth functions with derivatives up third order to be bounded. Moreover, $A$ is uniformly positive definite in the sense that exists $\kappa>0$ with
\begin{gather}
A(v)\succeq \kappa I_d,\quad \forall v\in \R^d.
\end{gather}
\end{assumption}
The positive definiteness is essential in this work, which does not hold for the real Landau equation without injected noise. However, if there is injected noise, $A\sim |v|^{\gamma+2}(I-\hat{v}\hat{v})+\sigma^2$. The positive definiteness assumption seems reasonable, which is not so artificial.

We will also assume the following.
\begin{assumption}\label{ass:A2}
The matrix $A$ satisfies that
\begin{gather}
\|\nabla_v A(v)\|\le C(1+|v|)^{-1}.
\end{gather}
\end{assumption}
\noindent This assumption is used to ensure that $|\nabla_v\bar{f}|\le C(1+|v|)$ in Theorem~\ref{pro:gradlogdensity}~(ii).
%which is used for the large deviation estimate in the mean field limit of the diffusion coefficient.
Though this condition is posed as a technical condition, it is satisfied
for the Landau equation with (regularized) $A\sim |v|^{\gamma+2}(I-\hat{v}\hat{v})+\sigma^2$ for $\gamma\le -2$.

We assume the initial velocities are sampled from a common distribution $f_0$ independently. We put the following assumptions on $f_0$.
\begin{assumption}\label{ass:initial}
The initial distribution $f_0$ has a smooth density, denoted by $f_0$ again, with the bounds
\begin{gather}
c_1e^{-\alpha_1 |v|^2}\le f_0(v)\le c_2 e^{-\alpha_2 |v|^2}.
\end{gather}
\end{assumption}

Let $f^{N}\in \bfP_s((\R^{d})^N)$ be the joint probability distribution for the random batch particle system~\ref{alg:rbmparticle} with initial distribution
\[
f^N(0,v_1,\cdots, v_N)=\prod_{i=1}^N f_0(v_i).
\] 
The one marginal distribution is denoted by $f^{(1)}$. Let $\bar{f}$ be the solution to the McKean equation \eqref{eq:mckeanlandau}. Here, we use bar so that the $N$ tensor
\[
\bar{f}^N(t,v_1,\cdots,v_N):=\bar{f}^{\otimes N}(t, v_1,\cdots, v_N):=\prod_{i=1}^N \bar{f}(t, v_i)
\]
can be distinguished from $f^N$. 

Our goal is to show that $f^{(1)}\to \bar{f}$ as $N\to\infty$ in some sense.
Motivated by the tools used for quantative estimate of propagation of chaos in \cite{jabinquantitative,huang2024mean}, we use the tool of relative entropy. 
\begin{definition}
For two measures $\mu, \nu\in \bfP(\cX)$ where $\cX$ is some Polish space, the relative entropy is defined as
\begin{gather}
H(\mu |\nu)=
\begin{cases}
\int_{\cX} \log(\frac{d\mu}{d\nu})\mu(dx), & \mu\ll\nu,\\
+\infty & \text{otherwise}.
\end{cases}
\end{gather}
\end{definition}
In our case, $\cX=(\R^d)^k$ for $k=1,\cdots, N$. Then, one often considers a normalized version of the relative entropy for $\mu,\nu\in \bfP((\R^d)^k)$:
\begin{gather}
H_k(\mu | \nu)=\frac{1}{k}H(\mu | \nu).
\end{gather}
The reason to consider this is that when the particles are close to being independent, then $H(\mu | \nu)$ scales like $k$ for $\mu\approx (\mu^{(1)})^{\otimes k}$ and $\nu=(\nu^{(1)})^{\otimes k}$. A useful property in the analysis of propagation of chaos is the following linear scaling property of entropy.
\begin{lemma}\label{lmm:orderentropy}
For symmetric measures, if $1\le k\le \ell\le N$, one has
\begin{gather}
H_k(f^{(k)}| \bar{f}^{\otimes k})\le H_{\ell}(f^{(\ell)}| \bar{f}^{\otimes \ell}).
\end{gather}
\end{lemma}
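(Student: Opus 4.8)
\textbf{Proof plan for Lemma~\ref{lmm:orderentropy}.}

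The plan is to reduce the claim to the case $\ell = k+1$ and then iterate, since $H_k \le H_{k+1} \le \cdots \le H_\ell$ would follow by transitivity. So fix $1 \le k < N$ and let me write $\mu = f^{(k+1)}$, the $(k+1)$-marginal of the symmetric measure $f^N$, and correspondingly $\bar{f}^{\otimes(k+1)}$ for the reference. I want to show $\frac{1}{k+1} H(f^{(k+1)} \mid \bar{f}^{\otimes(k+1)}) \ge \frac{1}{k} H(f^{(k)} \mid \bar{f}^{\otimes k})$. The natural tool is the chain rule (additivity under disintegration) for relative entropy: writing a point of $(\R^d)^{k+1}$ as $(x, y)$ with $x \in (\R^d)^k$ and $y \in \R^d$, and disintegrating $f^{(k+1)}(dx,dy) = f^{(k)}(dx)\, f^{(k+1)}_{y\mid x}(dy)$ while the reference factorizes as $\bar{f}^{\otimes k}(dx)\,\bar{f}(dy)$, one gets
\begin{gather}
H(f^{(k+1)} \mid \bar{f}^{\otimes(k+1)}) = H(f^{(k)} \mid \bar{f}^{\otimes k}) + \int_{(\R^d)^k} H\big(f^{(k+1)}_{y\mid x} \mid \bar{f}\big)\, f^{(k)}(dx).
\end{gather}
Since relative entropy is nonnegative, the integral term is $\ge 0$, so $H(f^{(k+1)} \mid \bar{f}^{\otimes(k+1)}) \ge H(f^{(k)} \mid \bar{f}^{\otimes k})$. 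But this only gives monotonicity of the \emph{unnormalized} entropy, which is weaker than what is claimed; I need the extra factor coming from the $\frac1k$ versus $\frac{1}{k+1}$ normalization.

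To get the normalized statement I would exploit symmetry more carefully. The key observation is that by exchangeability of $f^N$, the same disintegration identity holds with the single coordinate $y$ replaced by \emph{any} of the $k+1$ coordinates; more useful is the "super-additivity" form: for the symmetric measure $f^{(k+1)}$ one has, summing over which $k$-subset of coordinates one marginalizes onto,
\begin{gather}
(k+1)\, H(f^{(k)} \mid \bar{f}^{\otimes k}) \le k\, H(f^{(k+1)} \mid \bar{f}^{\otimes(k+1)}),
\end{gather}
which is exactly $H_k \le H_{k+1}$ after dividing by $k(k+1)$. This inequality is the relative-entropy analogue of Han's inequality from information theory. I would prove it by the following averaging argument: apply the chain-rule identity above but peel off coordinate $j$ (for each $j = 1, \dots, k+1$) rather than always the last one, giving $k+1$ identities of the form $H(f^{(k+1)} \mid \bar{f}^{\otimes(k+1)}) = H(f^{(k)}_{\hat{j}} \mid \bar{f}^{\otimes k}) + (\text{nonneg.\ conditional term})$, where $f^{(k)}_{\hat{j}}$ is the marginal omitting the $j$-th variable. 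By symmetry every $f^{(k)}_{\hat{j}}$ equals $f^{(k)}$, so summing the $k+1$ identities yields $(k+1) H(f^{(k+1)}\mid\bar f^{\otimes(k+1)}) = (k+1)H(f^{(k)}\mid\bar f^{\otimes k}) + (\text{sum of }k+1\text{ nonneg.\ terms})$; the gain comes from comparing this against the single chain-rule identity $H(f^{(k+1)}) = H(f^{(k+1)\to k}) + (\text{one conditional term})$ applied at level $k+1 \to k+1$... let me instead recall the cleanest route: Han's inequality states $\frac{1}{k+1}\sum_{j} H(f^{(k)}_{\hat j}\mid \bar f^{\otimes k}) \le \frac{k}{k+1} H(f^{(k+1)}\mid \bar f^{\otimes(k+1)})$, and combined with $f^{(k)}_{\hat j} = f^{(k)}$ this is precisely the desired bound. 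The proof of Han's inequality itself is the entropy chain rule plus the fact that conditioning decreases entropy (equivalently, subadditivity of the conditional entropy terms), carried out by a telescoping/averaging computation.

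\textbf{Main obstacle.} The only real content is establishing the Han-type inequality; the chain rule and nonnegativity of relative entropy are routine. The subtlety is bookkeeping the combinatorics of which coordinates are marginalized and invoking exchangeability at the right moments so that all the $k$-marginals are identified with a single $f^{(k)}$ and all the $(k+1)$-marginals with a single $f^{(k+1)}$. One must also be slightly careful about the absolute-continuity hypotheses ($\mu \ll \nu$): since the reference measures are tensor powers of the fixed density $\bar f$ and $f^N \ll \bar f^{\otimes N}$ can be assumed (otherwise both sides or the relevant side are $+\infty$ and the inequality is trivial or needs the convention), the disintegrations are well-defined. I do not anticipate analytic difficulties beyond this, so the write-up is essentially a clean invocation of Han's inequality for relative entropy.
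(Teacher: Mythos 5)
Your proof is correct and is in fact the standard argument for this ``normalized entropy monotonicity'' lemma (the paper states it without proof, citing the propagation-of-chaos literature, so there is no paper proof to compare against). The structure is right: reduce to $\ell = k+1$, observe that the plain chain rule only gives monotonicity of the \emph{unnormalized} relative entropy, and then invoke the Han-type superadditivity inequality
\begin{equation*}
\sum_{j=1}^{k+1} H\bigl(f^{(k)}_{\hat{\jmath}} \,\big|\, \bar f^{\otimes k}\bigr) \;\le\; k\, H\bigl(f^{(k+1)} \,\big|\, \bar f^{\otimes (k+1)}\bigr),
\end{equation*}
which together with symmetry (so that every $f^{(k)}_{\hat{\jmath}}$ equals $f^{(k)}$) yields $(k+1) H(f^{(k)}|\bar f^{\otimes k}) \le k\, H(f^{(k+1)}|\bar f^{\otimes(k+1)})$, i.e.\ $H_k \le H_{k+1}$, and iterating gives the claim. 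Your translation of Shannon-entropy Han's inequality to the relative-entropy form is valid precisely because the reference is a product measure: writing $H(\mu|\bar f^{\otimes n}) = -H(\mu) - \sum_i \E_\mu[\log \bar f(x_i)]$, the linear terms cancel identically when you compare $\sum_j H(\mu^{(\hat{\jmath})}|\bar f^{\otimes(n-1)})$ against $(n-1)H(\mu|\bar f^{\otimes n})$, reducing to classical Han.

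One remark on the exposition: the middle of your argument (summing the $k+1$ chain-rule identities to get $(k+1)H(f^{(k+1)}) = (k+1)H(f^{(k)}) + \sum_j C_j$) is a dead end as you note --- it only recovers the unnormalized monotonicity --- and the subsequent sentence trying to ``compare against the single chain-rule identity at level $k+1 \to k+1$'' is not meaningful. You correctly abandon it and invoke Han directly, but in a final write-up this digression should be cut: either cite Han's inequality for relative entropy against product references as known (it is standard in the quantitative mean-field literature), or prove it via the usual telescoping argument (chain rule $H(X_1,\dots,X_n) = \sum_i H(X_i | X_{<i})$ plus ``conditioning decreases entropy,'' averaged over orderings). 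The absolute-continuity caveat you raise is handled exactly as you say: if $f^N \not\ll \bar f^{\otimes N}$ both sides are treated by convention, and under $f^N \ll \bar f^{\otimes N}$ all marginals and disintegrations are well-defined since $\bar f$ is a positive smooth density.
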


The main result in this work is the following.
\begin{theorem}\label{thm:chaosforLandau}
Suppose that Assumptions \ref{ass:KA1}-\ref{ass:A2} for the kernels hold and that the initial density $f_0$ satisfies the Gaussian tail bounds in Assumption \ref{ass:initial}.
Then, it holds that
\begin{gather}
H(f^N| \bar{f}^{\otimes N})\le C(1+N\Delta t).
\end{gather}
Consequently, for any $1\le j\le N$, the distances between $j$-marginal distribution of the particle system and the tensorized law of the Landau-type equation are controlled by
\begin{gather}
TV(f^{(j)}, \bar{f}^{\otimes j})+W_2(f^{(j)}, \bar{f}^{\otimes j})  \le C\sqrt{H(f^{(j)}|\bar{f}^{\otimes j})} \le  C\sqrt{j}\left(\frac{1}{\sqrt{N}}+\sqrt{\Delta t} \right).
\end{gather}
\end{theorem}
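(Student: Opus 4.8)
The plan is to control the relative entropy $H(f^N \mid \bar f^{\otimes N})$ by a Gr\"onwall-type argument on the interval $[0,T]$, handling the discretized-batch structure piecewise on each $[t_m, t_{m+1})$. The starting point is the entropy dissipation identity: differentiating $H(f^N(t) \mid \bar f^{\otimes N}(t))$ in time produces a Fisher-information dissipation term with the good (negative) sign, coming from the diffusion, plus error terms measuring (a) the mismatch between the pairwise batch interaction $A(V_i - V_{\theta(i)})$ and the mean-field convolution $(A * \bar f)(V_i)$ in both the drift and the diffusion coefficient, and (b) the fact that on $[t_m,t_{m+1})$ the batch $\theta$ is frozen rather than resampled. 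Because the interaction sits in the diffusion coefficient, the computation is more delicate than the classical drift case of \cite{jabinquantitative}: after integrating by parts one must absorb second-order terms of the form $\nabla^2 : [(A(v_i - v_{\theta(i)}) - A*\bar f(v_i)) f^N]$ against the logarithmic gradient $\nabla \log(f^N/\bar f^{\otimes N})$, and this is precisely where the uniform positive-definiteness $A \succeq \kappa I$ from Assumption~\ref{ass:KA1} is used to dominate the cross terms by the Fisher information dissipation plus a controllable remainder.

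The key structural input is that the remainder terms, after the integrations by parts, are bilinear expressions in $\nabla \log f^N$ (or $\nabla \log \bar f^{\otimes N}$) and the fluctuation $A(v_i - v_j) - A*\bar f(v_i)$, integrated against $f^N$. To estimate these I would split in the standard way: the diagonal/self terms ($j = i$ or $j = \theta(i)$) are $O(1)$ in number and bounded using the boundedness of $A, \nabla A$ (Assumption~\ref{ass:KA1}) together with the gradient estimate $|\nabla \bar f| \le C(1+|v|)\bar f$ and the particle-system logarithmic-gradient estimate, which are exactly Theorem~\ref{pro:gradlogdensity} and the integral estimate of Section~\ref{sec:Npardensity}; the main term is a law-of-large-numbers fluctuation, controlled by a large-deviation / Gibbs-variational argument as in \cite{jabinquantitative}, which requires an exponential moment bound $\int \exp(c|\phi(v_i,v_j)|)\,\bar f^{\otimes 2} < \infty$ for the centered kernel $\phi$, furnished here by boundedness of $A$ and the Gaussian tails in Assumption~\ref{ass:initial} (propagated to $\bar f(t,\cdot)$). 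The frozen-batch error over $[t_m,t_{m+1})$ contributes a term of size $O(\Delta t)$ per step, hence $O(N\Delta t)$ after summing over $m = 0,\dots, T/\Delta t$ and noting the $1/N$ normalization is offset by the $N$ particles — this is the source of the $N\Delta t$ in the bound.

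Concretely the steps are: (1) write down the evolution equation for $f^N$ on $[t_m,t_{m+1})$ from Particle system~\ref{alg:rbmparticle} and the Kolmogorov forward equation for $\bar f^{\otimes N}$; (2) compute $\frac{d}{dt} H(f^N(t) \mid \bar f^{\otimes N}(t))$, integrating by parts to isolate the negative Fisher-information term and the error terms; (3) bound the diffusion-coefficient error terms using $A \succeq \kappa I$ to absorb them into a fraction of the Fisher dissipation plus $C\,H(f^N \mid \bar f^{\otimes N})$ plus fluctuation terms, invoking the gradient estimates of Sections~\ref{sec:meanfieldproperty}--\ref{sec:Npardensity}; (4) bound the fluctuation terms by the Gibbs variational principle and the exponential-moment bound, absorbing the entropy piece into Gr\"onwall; (5) bound the frozen-batch discretization error by $C\Delta t$ on each subinterval, using Lipschitz-in-time control of the flow and $\int |v|^2 f^N \le C$; (6) apply Gr\"onwall across each $[t_m, t_{m+1})$ and sum, obtaining $H(f^N(T) \mid \bar f^{\otimes N}(T)) \le C(1 + N\Delta t)$; (7) deduce the marginal statement via the sub-additivity Lemma~\ref{lmm:orderentropy} (giving $H(f^{(j)} \mid \bar f^{\otimes j}) \le \frac{j}{N} H(f^N \mid \bar f^{\otimes N}) \le Cj(\tfrac1N + \Delta t)$) followed by Csisz\'ar--Kullback--Pinsker for $TV$ and the Talagrand transport inequality (valid since $\bar f^{\otimes j}$ inherits a log-Sobolev / Gaussian-tail bound) for $W_2$. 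The main obstacle I expect is step (3): unlike the drift case, the fluctuation appears inside a second derivative, so the integration by parts moves one derivative onto $A$ (fine, by Assumption~\ref{ass:KA1}) and one onto $\log(f^N/\bar f^{\otimes N})$, and one must check carefully that after symmetrization the leftover quadratic form in $\nabla\log f^N$ is genuinely dominated by $\kappa$ times the Fisher information rather than merely comparable — the positive-definiteness margin $\kappa$ has to beat the operator norm of $\nabla A$ times the logarithmic-gradient growth, which is where Assumption~\ref{ass:A2} and the linear-growth bound $|\nabla\bar f|\le C(1+|v|)\bar f$ become essential.
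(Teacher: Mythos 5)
Your high-level framework matches the paper's: differentiate $H(f^N \mid \bar f^{\otimes N})$, isolate the Fisher-information dissipation, absorb error terms using $A\succeq\kappa I$, control the law-of-large-numbers fluctuations by a Gibbs-variational / concentration argument (Lemma~\ref{lmm:youngIneq} plus Lemma~\ref{lmm:concentration}), feed in the gradient bounds of Sections~\ref{sec:meanfieldproperty}--\ref{sec:Npardensity}, run Gr\"onwall, and finish with Lemma~\ref{lmm:orderentropy}, Pinsker and Talagrand. That part is correct and is the same route as the paper. However, there are two genuine gaps in the execution.

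First, you are missing the central technical device that makes the decomposition work: the \emph{coupled random batch} $\tilde\theta_i$ (Lemma~\ref{lmm:coupledbatch}). The $I$ and $J$ error terms have the form $\int \nabla\log(f^N/\bar f^N)\cdot\E_\theta[\Phi(v_i-v_{\theta(i)})\,f_m^{N,\theta}]$, in which \emph{both} the kernel $\Phi(v_i-v_{\theta(i)})$ and the density $f_m^{N,\theta}$ depend on the same random batch $\theta$. You cannot simply average over $\theta$ to replace the kernel by $\frac{1}{N-1}\sum_{j\ne i}\Phi(v_i-v_j)$ and leave $f_m^{N,\theta}$ untouched; the correlation has to be broken. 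The paper does this by inserting $f_m^{N,\tilde\theta_i}$, whose law is conditionally independent of $\theta(i)$ (since $\P(\theta(i)=j\mid\tilde\theta_i)=\frac{1}{N-1}$), so that $\E\big[\Phi(v_i-v_{\theta(i)})f_m^{N,\tilde\theta_i}\big]=\frac{1}{N-1}\sum_{j\ne i}\Phi(v_i-v_j)f^N$. This yields the clean split into (i) a fluctuation term $I_2,J_2$ amenable to Hoeffding concentration, and (ii) a ``frozen-batch'' local error $I_1,J_1$ involving the symmetrized $\chi^2$-type quantities $\int |f_m^{N,\theta}-f_m^{N,\tilde\theta_i}|^2/(f_m^{N,\theta}+f_m^{N,\tilde\theta_i})$ and $\int |\nabla_{v_i}f_m^{N,\theta}-\nabla_{v_i}f_m^{N,\tilde\theta_i}|^2/(f_m^{N,\theta}+f_m^{N,\tilde\theta_i})$. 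Your plan does not contain this construction, and without it the ``mismatch'' you describe cannot be rigorously separated into a concentration term plus a local error.

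Second, your explanation of where the $N\Delta t$ rate comes from is wrong. You claim it arises from accumulating an $O(\Delta t)$ error over $T/\Delta t$ steps, with the ``$1/N$ normalization offset by the $N$ particles.'' That bookkeeping would give $O(T)$ or $O(NT)$, not $O(N\Delta t)$. The actual mechanism is that the local error estimate of Proposition~\ref{pro:localerrors} gives, uniformly in $t\in[t_m,t_{m+1}]$,
\begin{equation*}
\E\sum_{i=1}^N\int_{\R^{Nd}}\frac{|\nabla_{v_i}f_m^{N,\theta}-\nabla_{v_i}f_m^{N,\tilde\theta_i}|^2}{f_m^{N,\theta}+f_m^{N,\tilde\theta_i}}\le CN(t-t_m)\le CN\Delta t,
\end{equation*}
which enters the Gr\"onwall inequality directly as a source term $C(1+N\Delta t)$. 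Moreover, the rate $N(t-t_m)$ rather than $N(t-t_m)^2$ is \emph{suboptimal}, caused by the lack of a uniform third-order derivative estimate for $\log f_m^{N,\theta}$ (the term \eqref{eq:localaux2} must be handled by an extra integration by parts). Recognizing this is important: the paper expects the optimal rate to be $1+N\Delta t^2$, and the $\sqrt{\Delta t}$ in the final marginal bound is an artifact of this gap, not a sharp result. Your step (5) proposes to control the frozen-batch error by ``Lipschitz-in-time control of the flow and a second-moment bound'' -- this will not produce the needed estimate because the quantity that must be controlled is a weighted $L^2$-distance between two Fokker--Planck densities with different diffusion coefficients, which requires the integral gradient and Hessian bounds of Theorem~\ref{thm:maincontrol} and Corollary~\ref{cor:expectedparticledensity}, not just moment bounds on the trajectories.
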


We believe that the rate here is not optimal.
As we will discuss in Section~\ref{subsec:localerr}, the optimal rate is expected to be $1+N\Delta t^2$. The main technical difficulty
is the estimate of the integral version of the third order derivatives independent of $N$.

\subsection*{The discussion on the molecular chaos}

As we have discussed in Section~\ref{sec:derivation}, the Kac equation can be shown to converge to the Landau equation. However, the derivation of Kac equation itself (and in fact Boltzmann equation) has assumed the molecular chaos \eqref{eq:molecularchaos}, which has not been justified. 
In fact, justification of the molecular chaos in the collision model \eqref{alg:particlekac} and \eqref{alg:particlegrazing} is very important for the microscopic description of the Landau models.  
As we have explained, the model \eqref{alg:rbmlandau} is expected to be close to \eqref{alg:particlegrazing}. Hence, it is expected that \eqref{alg:particlegrazing} would have the molecular chaos if system \eqref{alg:rbmlandau} does.

The result in Theorem \ref{thm:chaosforLandau} implies the propagation of chaos, because the independence between the particles at $t=0$ is preserved for $t\in (0, T]$ in the $N\to\infty$. It indeed justifies the molecular chaos in the $N\to\infty$ limit. We recall Kac's notation of  chaos (see \cite[Proposition 2.2]{sznitman1991}).
\begin{definition}
Let $E$ be a Polish space and $f$ be a probability measure on $E$. A sequence of symmetric probability measures $f^N$ on $E^N$ is said to be $f$-chaotic, if one of the three following equivalent conditions is satisfied:
\begin{enumerate}[(i)]
\item The sequence of second marginals $f^{(2)} \rightharpoonup f^{\otimes 2}$ as $N\to\infty$;
\item For all $j\ge 1$, the sequence of $j$th marginals $f^{(j)} \rightharpoonup f^{\otimes j}$ as $N\to\infty$;
\item Let $(X^{1N}, \cdots,  X^{N,N}) \in  E^N$ be drawn randomly according to $f^N$. The empirical (random) measure $\mu^N
=\frac{1}{N}\sum_{j=1}^N \delta(\cdot-X^{jN})$ converges in law to the constant probability measure $f$ as $N\to\infty$.
\end{enumerate}
\end{definition}
\noindent Here, the notation $\nu_N\rightharpoonup \nu$ for probability measures in $\mathbf{P}(E)$ means the weak convergence of probability measures (convergence in law), i.e., convergence under the weak star topology against $C_b(E)$.
The result in Theorem \ref{thm:chaosforLandau} clearly means that the joint distribution $f^N$ of our particle system \eqref{alg:rbmlandau} is $\bar{f}$-chaotic. The result tells us that the larger number of particles $N$ is, the more frequent that collision happens ($\Delta t$ smaller), there would be more chaos. 

Though the result here is for regular kernels, establishing the propagation of chaos for the real Landau model could be considered in a similar fashion, at least for some cases (like the hard potential and Maxwellian cases, $\gamma\ge 0$), and then generalized to model \eqref{alg:particlegrazing}. We believe our result is inspiring for the eventual derivation of Landau equations from the particle systems with collisions.

\subsection{Proof of the main results}

In this subsection, we give the proof of the main results while defer some main estimates of the densities to later sections.

Let $\theta^{(m)}:=\{\theta^{(m)}(1), \cdots, \theta^{(m)}(N)\}$ be the random batch at $t_m$. Here, $\theta^{(m)}(i)$ means the batchmate of $i$. Clearly, one has
\[
\theta^{(m)}(\theta^{(m)}(i))=i.
\]
Conditioning on a fixed sequence of batches $\b{\theta}:=(\theta^{(0)}, \theta^{(1)},\cdots)$, the law of the random batch system $(V_1,\cdots, V_N)$, denoted by $\tilde{f}^{\b{\theta}}$, satisfies the following linear Fokker-Planck equation for $t\in (t_m, t_{m+1})$ (and is continuous at $t_{m+1}$):
\begin{equation}\label{eq:jointrbm}
\partial_t\tilde{f}^{\b{\theta}}=\sum_{i=1}^N\left[-\nabla_{v_i}\cdot\left(K(v_i-v_{\theta^{(m)}(i)})\tilde{f}^{\b{\theta}} \right)+\frac{1}{2}\nabla_{v_i}^2:\left(A(v_i-v_{\theta^{(m)}(i)}) \tilde{f}^{\b{\theta}} \right)\right].
\end{equation}
Note that if we observe the random batch particle system at the discrete time $t_m$, it then forms a time-homogeneous Markov chain. By the Markov property, it is not hard to see the law at $t_m$ is given by
\begin{gather}
f^N(t_m, \cdot)=\E_{\b{\theta}} [\tilde{f}^{\b{\theta}}(t_m, \cdot)].
\end{gather}
Now, we consider the following density $f_m^{N,\theta}$, which satisfies
\begin{gather}\label{eq:initialvalueoffm}
f_m^{N,\theta}(t_m,\cdot)=f^N(t_m, \cdot)
\end{gather} 
at $t_m$ but evolves with the batch $\theta^{(m)}$ given. Then, again by the Markov property, one has for $t\in (t_m, t_{m+1})$ that
\begin{gather}
\partial_tf_m^{N,\theta}=\sum_{i=1}^N\left[-\nabla_{v_i}\cdot\left(K(v_i-v_{\theta^{(m)}(i)})f_m^{N,\theta}\right)+\frac{1}{2}\nabla_{v_i}^2:\left(A(v_i-v_{\theta^{(m)}(i)})f_m^{N,\theta}\right)\right],
\end{gather}
and that
\begin{gather}
f^N(t_{m+1},\cdot)=\E_{\theta^{(m)}}f_m^{N,\theta}(t_{m+1}, \cdot).
\end{gather}

Define
\begin{gather}
f^{N}(t,\cdot):=\E_{\theta^{(m)}}f_m^{N,\theta}(t,\cdot)=\E_{\b{\theta}}[\tilde{f}^{\b{\theta}}(t,\cdot)].
\end{gather}
One then has for $t\in (t_m, t_{m+1})$ that:
\begin{gather}
\partial_tf^N=\sum_{i=1}^N\E_{\theta^{(m)}}\left[-\nabla_{v_i}\cdot\left(K(v_i-v_{\theta^{(m)}(i)})f_m^{N,\theta}\right)+\frac{1}{2}\nabla_{v_i}^2:\left(A(v_i-v_{\theta^{(m)}(i)})f_m^{N,\theta}\right)\right].
\end{gather}
Using \eqref{eq:mckeanlandau}, one finds that the equation for $\bar{f}^N$ satisfies the following equation
\begin{gather}
\partial_t\bar{f}^N=\sum_{i=1}^N\left[-\nabla_{v_i}\cdot\left(K(v_i; [\bar{f}])\bar{f}^{N}\right)+\frac{1}{2}\nabla_{v_i}^2:\left(A(v_i; [\bar{f}])f^{N}\right)\right],
\end{gather}
where we introduced the notations $K(v_i; [\bar{f}])$ and $A(v_i; [\bar{f}])$ as the convolved kernel with $\bar{f}$:
\begin{gather}
K(\cdot; [\bar{f}]):=\int K(\cdot-y)\bar{f}(t, y)dy,
\quad A(\cdot; [\bar{f}]):=\int A(\cdot-y)\bar{f}(t, y)dy.
\end{gather}

Our goal is then to establish an inequality of Gr\"onwall type for $H(f^N | \bar{f}^N)$.  To do this, we need some auxilliary results as follows.

The following is a Fenchel--Young type inequality which allows us to change the base probability measure. 
\begin{lemma}\label{lmm:youngIneq}
For two probability measures $\mu,\mu'$ and test function $\Phi$:
\[
\mu(\Phi)\le \frac{1}{\eta}(H(\mu |\mu')+\log\mu'(e^{\eta\Phi})),
\forall \eta>0.
\]
\end{lemma}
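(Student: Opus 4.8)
\textbf{Plan of proof (Lemma~\ref{lmm:youngIneq}).} This is the classical Donsker--Varadhan variational formula, or more precisely the ``easy half'' of it giving an upper bound. The plan is to prove the inequality directly under the assumption that the right-hand side is finite, i.e. that $H(\mu\,|\,\mu')<\infty$ (so $\mu\ll\mu'$) and $\mu'(e^{\eta\Phi})<\infty$; otherwise the inequality is trivial. Fix $\eta>0$ and write $\varphi=\log\tfrac{d\mu}{d\mu'}$ for the log-density. The core identity is the pointwise Fenchel--Young inequality for the convex conjugate pair $(e^x, x\log x - x)$, or equivalently the bound $ab\le e^{a}+b\log b - b$; but it is cleaner to use the even more elementary inequality $xy \le e^{y} + x\log x - x$ for $x\ge 0$, $y\in\R$, which follows from the concavity of $\log$ (equivalently, $\log t \le t-1$).

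\textbf{Key steps.} First I would normalize: introduce the tilted probability measure $\mu''$ defined by $\tfrac{d\mu''}{d\mu'} = e^{\eta\Phi}/Z$ where $Z=\mu'(e^{\eta\Phi})$, so that $\mu''$ is a genuine probability measure. Then the chain rule for relative entropy gives
\[
H(\mu\,|\,\mu') = H(\mu\,|\,\mu'') + \mu\!\left(\log\frac{d\mu''}{d\mu'}\right) = H(\mu\,|\,\mu'') + \eta\,\mu(\Phi) - \log Z.
\]
Since $H(\mu\,|\,\mu'')\ge 0$ (nonnegativity of relative entropy, which itself follows from Jensen's inequality applied to the convex function $t\mapsto t\log t$, or from $\log t\le t-1$), rearranging yields
\[
\eta\,\mu(\Phi) \le H(\mu\,|\,\mu') + \log Z = H(\mu\,|\,\mu') + \log\mu'(e^{\eta\Phi}),
\]
which is exactly the claim after dividing by $\eta>0$. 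An alternative, more self-contained route avoids defining $\mu''$: write $\eta\Phi = \log\tfrac{d\mu}{d\mu'} + \log\!\big(e^{\eta\Phi}\tfrac{d\mu'}{d\mu}\big)$ $\mu$-a.e. on $\{d\mu/d\mu'>0\}$, integrate against $\mu$, and bound the second term using Jensen: $\mu\big(\log(e^{\eta\Phi}\tfrac{d\mu'}{d\mu})\big) \le \log\,\mu\big(e^{\eta\Phi}\tfrac{d\mu'}{d\mu}\big) = \log\,\mu'(e^{\eta\Phi})$, where the last equality uses $\mu\ll\mu'$ so that $\mu(g\,\tfrac{d\mu'}{d\mu})=\mu'(g)$ on the relevant set and $e^{\eta\Phi}\ge0$ on its complement only helps.

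\textbf{Main obstacle.} There is no deep obstacle; the only points requiring care are measure-theoretic bookkeeping. One must handle the set $\{d\mu/d\mu'=0\}$ (which has $\mu$-measure zero, so it does not contribute to $\mu(\Phi)$ once $\mu\ll\mu'$) and justify that $\mu(\Phi)$ is well-defined: if $\Phi$ is merely assumed such that $\mu'(e^{\eta\Phi})<\infty$, then $\Phi^+$ is $\mu$-integrable because $e^{\eta\Phi}\in L^1(\mu')$ forces, via the finite-entropy hypothesis and the Young-type bound $\eta\Phi^+\,\tfrac{d\mu}{d\mu'}\le e^{\eta\Phi}+\tfrac{d\mu}{d\mu'}\log^+\tfrac{d\mu}{d\mu'}$, integrability of $\Phi^+$ against $\mu$; and $\Phi^-$ may integrate to $+\infty$ only in the favorable direction, so $\mu(\Phi)\in[-\infty,+\infty)$ and the stated inequality still makes sense. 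In the application in this paper $\Phi$ will always be chosen bounded (or controlled by polynomially-growing terms with Gaussian-type weights), so these integrability caveats are automatically satisfied and it suffices to state the lemma under, say, $\Phi$ bounded measurable, or $\mu'(e^{\eta\Phi})<\infty$ together with $\mu(|\Phi|)<\infty$.
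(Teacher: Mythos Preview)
Your proposal is correct; this is exactly the standard Donsker--Varadhan/Gibbs variational argument, and the two routes you sketch (tilted measure plus nonnegativity of entropy, or direct Jensen) are both valid and well known. Note that the paper itself does not prove Lemma~\ref{lmm:youngIneq}: it is simply stated as ``a Fenchel--Young type inequality'' and used as a black box, so there is no paper proof to compare against --- your write-up would serve as a complete proof where the paper gives none.
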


Next, we introduce a concentration result for the law of large number type estimates we need later.
Recall the $\psi_2$ norm (the Orlicz norm corresponding to $\psi_2(x)=\exp(x^2)-1$) for sub-Gaussian variable \cite{vershynin2018high}.
\begin{gather*}
\|X\|_{\psi_2}:=\inf\left\{c\ge 0: \E(e^{|X|^2/c^2})\le 2 \right\}.
\end{gather*}
The standard Hoeffding's inequality \cite{vershynin2018high} is a type of concentration inequality, which claims that for $N$ independent centered (i.e., mean-zero) real random variables $\{X_i\}_{i=1}^N$, one has for some universal constant $c_*>0$ that 
%\tcr{Check for random vectors, whether the constant $c_*$ would be changed or not...}
\begin{gather}
\P\left( \left|\sum_{i=1}^N X_i \right|\ge y \right) \le 2\exp\left(-\frac{c_* y^2}{\sum_{i=1}^N \|X_i\|_{\psi_2}^2}\right),
\quad \forall y\ge 0.
\end{gather}
In our case, $X_i$ could  be vectors and matrices. We recall a result due to Talagrand \cite[Theorem 4]{talagrand1989isoperimetry}: for $N$ independent mean-zero random variables taking values in a Banach space $B$, one has
\begin{gather}
\|\sum_{i=1}^N X_i\|_{\psi_2}
\le K_2\left(\E \|\sum_{i=1}^N X_i\|_B+(\sum_{i=1}^N \|X_i\|_{\psi_2}^2)^{1/2}\right).
\end{gather}
In our case, the vectors and matrices are actually in $\R^{d'}$ for $d'=d$ or $d'=d^2$. Hence, 
\begin{gather}
\|\sum_{i=1}^N X_i\|_{\psi_2}^2\le 2K_2^2\left(\sum_{i=1}^N (\|X_i\|_2^2+\|X_i\|_{\psi_2}^2)\right)\le K_2'\sum_{i=1}^N \|X_i\|_{\psi_2}^2,
\end{gather}
since $\|X_i\|_2\le \|X_i\|_{\psi_2}$. Hence, the Hoeffding's inequality also holds for our case and the universal constant $c_*$ might be larger compared to the real random variables.

The following is a consequence of the above concentration inequality. The sketch of the proof will be given in Section~\ref{subsec:newlargedevi} for the convenience of the readers. 
\begin{lemma}\label{lmm:concentration}
Consider $\rho\in \mathcal{P}(E)$ and  $\psi(x)$ satisfying 
$\int_{E} \psi(x)\rho(dx) =0$ and for the universal constant $c_*>0$ in the Hoeffding's inequality, the following holds
\begin{gather*}
\|\psi(x)\|_{\rho}:=\inf\left\{c>0: \int_E \exp(|\psi(x)|^2/c^2)\rho(dx) \le 2 \right\}<c_*.
\end{gather*}
Then
\begin{gather}
\sup_{N\geq 1} \int_{E^N} \exp\bigg(\frac{1}{N} \Big|\sum_{i=1}^N \psi(x_i)\Big|^2\bigg)\rho^{\otimes N}\mathrm{d}\mathrm{x}<\infty.
\end{gather}
\end{lemma}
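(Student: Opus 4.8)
The plan is to control the exponential moment by combining the sub-Gaussian tail bound from Hoeffding's inequality with the layer-cake (distribution function) formula. Write $S_N := \frac{1}{\sqrt{N}}\sum_{i=1}^N \psi(x_i)$, so that the quantity to bound is $\E_{\rho^{\otimes N}}\big[\exp(|S_N|^2)\big]$. Since $\psi$ has $\rho$-mean zero and $\|\psi\|_{\rho}<c_*$, the scaled variables $X_i=\psi(x_i)/\sqrt N$ are independent, centered, with $\sum_i\|X_i\|_{\psi_2}^2=\|\psi\|_\rho^2<c_*$; by the (vector/matrix) Hoeffding inequality recorded just above the lemma,
\begin{gather}
\rho^{\otimes N}\big(|S_N|\ge y\big)\le 2\exp\!\big(-c_* y^2/\|\psi\|_\rho^2\big)=:2\exp(-a y^2),\qquad a:=c_*/\|\psi\|_\rho^2>1,
\end{gather}
and crucially $a$ is independent of $N$.

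Next I would use the layer-cake identity for the nonnegative random variable $e^{|S_N|^2}$: with the substitution $u=e^{y^2}$,
\begin{gather}
\E\big[e^{|S_N|^2}\big]=1+\int_1^\infty \rho^{\otimes N}\big(e^{|S_N|^2}>u\big)\,du
=1+\int_0^\infty 2y\, e^{y^2}\,\rho^{\otimes N}\big(|S_N|>y\big)\,dy.
\end{gather}
Inserting the tail bound gives
\begin{gather}
\E\big[e^{|S_N|^2}\big]\le 1+\int_0^\infty 4y\, e^{y^2}e^{-ay^2}\,dy
=1+\int_0^\infty 4y\, e^{-(a-1)y^2}\,dy=1+\frac{2}{a-1},
\end{gather}
which is a finite bound independent of $N$. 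Taking the supremum over $N\ge1$ then yields the claim. A minor technical point to address is that the tail bound is only assumed to hold up to the universal constant in Hoeffding's inequality, so one should be slightly careful: the hypothesis $\|\psi\|_\rho<c_*$ is exactly what forces $a>1$, which is what makes the Gaussian integral converge; if one only had $\|\psi\|_\rho\le c_*$ (so $a=1$) the integral would diverge, which is why the strict inequality is imposed.

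The main obstacle — really the only nontrivial ingredient — is justifying the sub-Gaussian tail for $S_N$ when $\psi$ takes values in $\R^{d'}$ rather than $\R$; but this is already handled in the paragraph preceding the lemma via Talagrand's Banach-space Hoeffding inequality, which shows $\|\sum_i X_i\|_{\psi_2}^2 \le K_2'\sum_i\|X_i\|_{\psi_2}^2$ and hence the scalar Hoeffding bound persists in the vector/matrix setting with a possibly enlarged universal constant $c_*$. Given that input, the remaining argument is the elementary layer-cake computation above. One should also note the estimate is uniform not just in $N$ but, with the same constant, the bound $1+2/(a-1)$ depends on $\psi$ only through $\|\psi\|_\rho$, which will be convenient when this lemma is applied (via Lemma~\ref{lmm:youngIneq}) to the law-of-large-numbers terms arising in the relative entropy estimate.
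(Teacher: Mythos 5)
Your proposal is correct and takes essentially the same route as the paper: apply the (vector-valued) Hoeffding inequality to get a sub-Gaussian tail for $\sum_i\psi(x_i)$ with a rate independent of $N$, then integrate the tail against $e^{y^2/N}$ via the layer-cake formula and observe that the Gaussian integral converges precisely because the assumed bound on $\|\psi\|_\rho$ makes the effective exponent exceed $1$. One small normalization remark: you write $\sum_i\|X_i\|_{\psi_2}^2=\|\psi\|_\rho^2<c_*$, which matches the squared denominator in the Hoeffding bound quoted before the lemma, whereas the lemma's stated hypothesis is $\|\psi\|_\rho<c_*$ (and the paper's own proof uses the unsquared $\sum_i\|\psi(X_i)\|_{\psi_2}$ in the denominator); this is an inconsistency already present in the paper, not a gap in your argument, but it is worth being aware that whichever form of Hoeffding one fixes dictates whether the hypothesis should read $\|\psi\|_\rho<c_*$ or $\|\psi\|_\rho^2<c_*$.
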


%We remark that if $\rho$ satisfies $\int_{E} \psi(x)\rho(x)dx =0$ and
%\begin{gather*}
%\gamma=C_*\Big( \sup_{p\ge 1} \frac{1}{p}\||\psi|^2\|_{L^p(\rho)}\Big)^2<1
%\end{gather*}
%for a universal constant $C_*$ in \cite[Theorem 4]{jabinquantitative}, then $\psi(X)$ ($X\sim \rho$) is a sub-Gaussian random variable and the above estimate holds.

\begin{remark}
The result above is related to the large deviation estimate from \cite[Theorem 4]{jabinquantitative}, proved using combinatoric techniques.
%\begin{lemma}[{\cite[Theorem 4]{jabinquantitative}}]\label{lmm:largedeviation}
%Consider $\rho\in \mathcal{P}(E)$ and $\phi(x,y)$ satisfies 
%\begin{gather}
%\gamma:=C(\sup_{p\ge 1}\|\sup_y|\phi(\cdot, y)|\|_{L^p(\rho)}/p)^2<1
%\end{gather}
% with a universal constant $C$. Assume that $\phi$ satisfies the following cancellations
%\[
%\int_{E} \phi(x,y)\rho(x)dx =0 \quad \forall y,\quad \int_{E} \phi(x,y)\rho(y)dy =0 \quad \forall x.
%\]
%Then
%\begin{gather}
%\sup_{N\geq 2} \int_{E^N} \exp\Bigg(\frac{1}{N} \sum_{i,j=1}^N \phi(x_i,x_j)\Bigg)\rho^{\otimes N}\mathrm{d}\mathrm{x}\leq \frac{2}{1-\gamma}<\infty.
%\end{gather}
%\end{lemma}
Clearly, one may understand $\phi(x, y)$ from \cite[Theorem 4]{jabinquantitative} by $\phi(x, y)\sim \psi(x)\psi(y)$ in our case. However, for our purpose later, $\sup_y |\psi(x)\psi(y)|=\infty$ so one cannot apply the result from \cite[Theorem 4]{jabinquantitative} directly.
\end{remark}

Next, we introduce a copy of the random batch for our use later.
Given a random batch $\theta$, we consider the following auxiliary coupled random batch $\tilde{\theta}_{i}$.
\begin{itemize}
\item Choose $j$ from $\{1,\cdots, N\}\setminus \{i\}$ randomly with equal probability. Set $\tilde{\theta}_i(i)=j$.
\item If $j=\theta(i)$, then $\tilde{\theta}_i=\theta$. Otherwise, we set $\tilde{\theta}_i(\theta(i))=\theta(j)$
and $\tilde{\theta}_i(\ell)=\theta(\ell)$ for $\ell\neq i, j, \theta(i), \theta(j)$.
\end{itemize}
Regarding this coupled batch, we have the following simple observation.
\begin{lemma}\label{lmm:coupledbatch}
The coupled batch $\tilde{\theta}_i$ itself is a random batch in the sense that it has the same law with $\theta$. Moreover, for any $i$ and $j\neq i$, one has
\begin{gather}
\P(\theta(i)=j | \tilde{\theta}_i)=\frac{1}{N-1}.
\end{gather}
\end{lemma}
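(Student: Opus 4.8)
\textbf{Proof plan for Lemma~\ref{lmm:coupledbatch}.}

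The plan is to verify the two claims by a direct symmetry/counting argument, treating the construction as a two-step randomization and conditioning on the intermediate choice of $j$. First I would record the basic structure: a random batch $\theta$ on $\{1,\dots,N\}$ is a uniformly chosen perfect matching (equivalently, a fixed-point-free involution with all cycles of length $2$), so $\theta(i)$ is uniform on $\{1,\dots,N\}\setminus\{i\}$ and, more generally, $\theta$ is invariant under relabeling of the $N$ particles. The construction of $\tilde\theta_i$ first picks $j$ uniformly from $\{1,\dots,N\}\setminus\{i\}$, then \emph{either} leaves $\theta$ alone (if $j=\theta(i)$) \emph{or} performs the local swap that reassigns $i\leftrightarrow j$, $\theta(i)\leftrightarrow\theta(j)$, leaving all other pairs untouched. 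The key combinatorial fact is that this swap is an involution on the set of matchings: applying the same recipe a second time (with the same $i,j$) returns the original $\theta$. This makes the map $\theta\mapsto\tilde\theta_i$, for each fixed $j$, a bijection on the space of matchings.

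For the first claim, that $\tilde\theta_i \stackrel{d}{=} \theta$, I would condition on the value of $j$. Conditional on $j$, the map $\theta\mapsto\tilde\theta_i$ is a bijection on the (finite, uniform) space of matchings, hence pushes the uniform law forward to the uniform law; since this holds for every $j$, averaging over $j$ gives $\tilde\theta_i\sim\theta$. (One has to note the ``$j=\theta(i)$'' branch is consistent with the swap recipe — in that branch the swap is the identity, so no separate treatment is needed.) Alternatively, and perhaps more transparently for the reader, one can observe that the whole construction of $(\theta,\tilde\theta_i)$ is equivariant under permutations of $\{1,\dots,N\}$ fixing $i$, and that $\tilde\theta_i$ is obtained from $\theta$ by a random local modification whose law, conditioned on the ``slots'' involved, is symmetric; I expect to present the bijection argument as the clean one.

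For the second claim, $\P(\theta(i)=j\mid\tilde\theta_i)=\tfrac{1}{N-1}$, the natural route is to compute the joint law of $(\theta(i),\tilde\theta_i)$ and exploit the symmetry in $j$. Fix a realization $\tau$ of $\tilde\theta_i$ with $\tilde\theta_i(i)=k$. I would enumerate the pairs $(\theta,j)$ that produce $\tilde\theta_i=\tau$: by the involution property, given the ``outer'' choice $j$, there is exactly one $\theta$ with $\tilde\theta_i=\tau$, namely $\theta$ obtained by applying the swap recipe to $\tau$ with parameters $i,j$; and for that $\theta$ one has $\theta(i)=j$ when $j\ne k$, while $\theta(i)=k$ when $j=k$. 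Since $j$ is uniform over the $N-1$ values in $\{1,\dots,N\}\setminus\{i\}$ and $\theta$ given $j$ is (conditionally on $\tilde\theta_i=\tau$) deterministic, the posterior of $\theta(i)$ given $\tilde\theta_i=\tau$ is exactly the uniform distribution over those $N-1$ values — in particular $\P(\theta(i)=j\mid\tilde\theta_i)=\tfrac1{N-1}$ for the generic $j\ne k$, and the boundary case $j=k$ fits the same formula. I expect the main obstacle to be purely bookkeeping: making the ``given $j$, exactly one preimage $\theta$'' statement airtight, i.e.\ carefully checking that the swap recipe really is an involution on matchings (including the degenerate configurations where $\theta(j)$ or $\theta(i)$ coincide with special indices) and that no realization of $\tilde\theta_i$ is produced by two different $\theta$'s with the same $j$. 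Once that involution is nailed down, both assertions are immediate consequences of the uniformity of $j$ and the bijectivity of the construction.
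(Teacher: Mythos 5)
Your final conclusions are right, and the high-level idea (count preimages of $\tau$ under the construction and use uniformity) is the same one the paper uses via Bayes' formula. However, two key claims in your argument are false, and they are not repairable by mere rewording.

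First, you claim that the construction $\theta\mapsto\tilde\theta_i$ for fixed $j$ is an involution and hence a bijection on the space of matchings. It is neither. Write $F_j(\theta)$ for the matching produced from $\theta$ when the random choice equals $j$. By construction $F_j(\theta)(i)=j$ always, so the image of $F_j$ is only the set of matchings pairing $i$ with $j$; since the domain is all matchings, $F_j$ cannot be injective (it is in fact $(N-1)$-to-$1$ onto that restricted set). As for the involution: applying the recipe a second time with the same $(i,j)$ does \emph{not} return $\theta$. Since $F_j(\theta)(i)=j$, the second application hits the identity branch ``if $j=\theta(i)$,'' and returns $F_j(\theta)$ again; i.e.\ $F_j\circ F_j=F_j$ (idempotent, not involutive). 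So the bijectivity you invoke for claim (i) does not hold. The first claim is still true, but it requires a different argument (the paper's appeal to symmetry, or the correct counting: for each fixed $j$ the pushforward of the uniform prior under $F_j$ is uniform on $\{\tau:\tau(i)=j\}$, and averaging over $j$ restores uniformity on all matchings).

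Second, for claim (ii) you enumerate the preimage pairs $(\theta,j)$ of a fixed $\tau$ and assert that ``given the outer choice $j$, there is exactly one $\theta$ with $\tilde\theta_i=\tau$.'' This is also false: if $j\ne\tau(i)$, there is \emph{no} $\theta$ at all, since the construction forces $\tilde\theta_i(i)=j$ whereas $\tau(i)\ne j$. In the conditional world $\{\tilde\theta_i=\tau\}$, the random choice $j$ is deterministic and equals $\tau(i)$; it is certainly not uniform. The $N-1$ preimages are parameterized not by $j$ but by $m:=\theta(i)\in\{1,\dots,N\}\setminus\{i\}$: for each such $m$ there is a unique $\theta$ (obtained by applying the swap to $\tau$ with $(i,m)$), and each comes with the same forced $j=\tau(i)$. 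The uniformity of the posterior of $\theta(i)$ therefore comes from the uniformity of the prior on $\theta$ over these $N-1$ equiprobable preimages, not from uniformity of $j$. A clean way to package this is the actual bijection underlying the lemma: $(\theta,j)\mapsto(\tilde\theta_i,\theta(i))$ is a bijection of $\{\text{matchings}\}\times(\{1,\dots,N\}\setminus\{i\})$ onto itself; pushing forward the uniform product law gives both claims at once, since the image is then uniform on the product, so $\tilde\theta_i$ is uniform and $\theta(i)$ given $\tilde\theta_i$ is uniform. This matches the paper's Bayes computation, where $\tilde\sigma$ is precisely the preimage $\theta$ reconstructed from $(\sigma,\theta(i)=j)$.
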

\begin{proof}
For the first claim follows by the symmetry of the construction. 

For the second claim, we show that for any concrete permutation
$\sigma$ that can be decomposed into $N/2$ cycles with length 2, one has
\[
\P(\theta(i)=j | \tilde{\theta}_i(k)=\sigma(k))=\frac{1}{N-1}.
\]
In fact, by Bayesian formula, this equals
\[
\frac{\P(\theta(i)=j, \tilde{\theta}_i(k)=\sigma(k),k=1,\cdots, N)}{P(\tilde{\theta}_i(k)=\sigma(k),k=1,\cdots, N)}
=\frac{\P(\theta=\tilde{\sigma}, \tilde{\theta}_i(i)=\sigma(i))}{\P(\tilde{\theta}_i(k)=\sigma(k),k=1,\cdots, N)}.
\]
Here, $\tilde{\sigma}$ is constructed from $\tilde{\theta}_i$ with the information $\theta(i)=j$.
Since $\P(\theta=\tilde{\sigma})=\P(\tilde{\theta}_i=\sigma)$, the claim then follows by the construction of $\tilde{\theta}_i$.
\end{proof}

Now, we complete the proof of the main results taking them for granted, and assuming some local error bounds  in Section~\ref{sec:Npardensity}  being proved. We will close them in later sections.

\begin{proof}[Proof of Theorem \ref{thm:chaosforLandau}]
{\bf Step 1}: Computing the derivative of the relative entropy.

Noting $\int \partial_t f^N=0$, one has
\begin{multline*}
\frac{d}{dt}H(f^N | \bar{f}^N)
=\int (\log\frac{f^N}{\bar{f}^N})\partial_t f^N-\int \frac{f^N}{\bar{f}^N}\partial_t \bar{f}^N\\
=\sum_{i=1}^N\int \nabla_{v_i}\left(\log \frac{f^N}{\bar{f}^N}\right)\cdot \E_{\theta^{(m)}}(
[K(v_i-v_{\theta^{(m)}(i)})-\frac{1}{2}\nabla_{v_i}\cdot A(v_i-v_{\theta^{(m)}(i)})] f_m^{N,\theta} )\\
-\frac{1}{2}\sum_{i=1}^N\int \E_{\theta^{(m)}}\left(\nabla_{v_i}\left(\log \frac{f^N}{\bar{f}^N}\right)
\otimes \nabla_{v_i}f_m^{N,\theta}: A(v_i-v_{\theta^{(m)}(i)})\right)-\\
\sum_{i=1}^N\int \nabla_{v_i}\left(\frac{f^N}{\bar{f}^N}\right)\cdot ([K(v_i; [\bar{f}])-\frac{1}{2}\nabla_{v_i}\cdot A(v_i; [\bar{f}]) ]\bar{f}^N )
+\frac{1}{2}\sum_{i=1}^N\int  \left(\nabla_{v_i}\left( \frac{f^N}{\bar{f}^N}\right)
\otimes \nabla_{v_i}\bar{f}^{N}: A(v_i;\bar{f})\right).
\end{multline*}
Define
\[
b(z):=K(z)-\frac{1}{2}\nabla\cdot A(z),\quad b(v; [\bar{f}])=\int b(v-y)\bar{f}(t, y)dy.
\]
Then, the above can be rearranged into 
\begin{multline*}
\frac{d}{dt}H(f^N | \bar{f}^N)=-\frac{1}{2}\sum_{i=1}^N\int \Bigg( \E_{\theta^{(m)}}\left(\nabla_{v_i}\left(\log \frac{f^N}{\bar{f}^N}\right) \otimes \nabla_{v_i}f_m^{N,\theta}: A(v_i-v_{\theta^{(m)}(i)})\right) \\
- \nabla_{v_i}\left(\log \frac{f^N}{\bar{f}^N}\right)\otimes\nabla_{v_i}\log\bar{f}^N : A(v_i; \bar{f}) f^N \Bigg) +\\
\sum_{i}\int \Bigg( \nabla_{v_i}\left(\log \frac{f^N}{\bar{f}^N}\right)\cdot \E_{\theta^{(m)}}[b(v_i- v_{\theta^{(m)}(i)})f_m^{N,\theta}]-\nabla_{v_i} \left(\log \frac{f^N}{\bar{f}^N}\right)\cdot b(v_i; [\bar{f}]) f^N \Bigg)=:I+J.
\end{multline*}

To do further estimate, we introduce another copy $f_m^{N,\tilde{\theta}_i}$ of $f_m^{N,\theta}$ for each $i$, which is the same at $t_m$ but evolves with the coupled batch $\tilde{\theta}_i^{(m)}$.

For the term $I$, one has by Lemma \ref{lmm:coupledbatch} that
\begin{multline*}
I=-\frac{1}{2}\sum_{i=1}^N\int  \E \left(\nabla_{v_i}\left(\log \frac{f^N}{\bar{f}^N}\right) \otimes (\nabla_{v_i}f_m^{N,\theta}-\nabla_{v_i}f_m^{N,\tilde{\theta}_i}): A(v_i-v_{\theta^{(m)}(i)})\right) \\
-\frac{1}{2}\sum_{i=1}^N\int\Bigg(\nabla_{v_i}\left(\log \frac{f^N}{\bar{f}^N}\right)\otimes\nabla_{v_i}\log f^N :(\frac{1}{N-1}\sum_{j\neq i}A(v_i-v_j)) \\
- \nabla_{v_i}\left(\log \frac{f^N}{\bar{f}^N}\right)\otimes\nabla_{v_i}\log\bar{f}^N : A(v_i; \bar{f}) \Bigg) f^N=:I_{1}+I_{2}.
\end{multline*}
For term $J$, using the same argument, one has
\begin{multline*}
J=\sum_{i=1}^N \int \Bigg( \nabla_{v_i}\left(\log \frac{f^N}{\bar{f}^N}\right)\cdot \E \left[b(v_i- v_{\theta^{(m)}(i)})(f_m^{N,\theta}-f_m^{N,\tilde{\theta}_i})\right]\Bigg)\\
+\sum_{i=1}^N \int \nabla_{v_i}\left(\log \frac{f^N}{\bar{f}^N}\right)\cdot \Bigg(\frac{1}{N-1}\sum_{j\neq i}b(v_i-v_j)  - b(v_i; [\bar{f}]) \Bigg)f^N
=:J_1+J_2.
\end{multline*}

{\bf Step 2. Estimates of the terms}

The estimates of the $J$ terms are relatively easier.
The term $J_2$ can be estimated similarly using the usual tricks for propagation of chaos estimate in the drift (see, \cite{jabinquantitative}, for example). In fact,
\[
J_2\le  \sum_{i=1}^N \delta \int  \left|\nabla_{v_i} \log \frac{f^N}{\bar{f}^N}\right|^2 f^N +\sum_{i=1}^N C(\delta)\int \Big|\frac{1}{N-1}\sum_{j\neq i}b(v_i-v_j)  - b(v_i; [\bar{f}]) \Big|^2 f^N.
\]
The first term here will be absorbed by the entropy dissipation contained in $I$ term. The second term here will be controlled using Lemma \ref{lmm:youngIneq} and one has
\begin{multline*}
 \int \sum_{i=1}^N \Big|\frac{1}{N-1}\sum_{j\neq i}b(v_i-v_j)  - b(v_i; [\bar{f}]) \Big|^2 f^N
\le \frac{1}{\eta}\sum_{i=1}^N \frac{1}{N-1}H(f^N|\bar{f}^N) \\
+\frac{1}{\eta}\sum_{i=1}^N \frac{1}{N-1} \log \int \exp\bigg(\eta \frac{1}{N-1} \Big|\sum_{j\neq i}b(v_i-v_j)  - b(v_i; [\bar{f}]) \Big|^2\bigg)\bar{f}^N.
\end{multline*}
Since $b$ is bounded, one can apply Lemma \ref{lmm:concentration} directly for $\eta\lesssim \|b\|_{\infty}$ to obtain
\[
\int_{\R^{Nd}} \exp\bigg(\eta \frac{1}{N-1} \Big|\sum_{j\neq i}b(v_i-v_j)  - b(v_i; [\bar{f}]) \Big|^2\bigg)\bar{f}^N\le C
\]
independent of $N$. Hence,
\[
J_2\le \sum_{i=1}^N \delta \int_{\R^{Nd}}  \left|\nabla_{v_i} \log \frac{f^N}{\bar{f}^N}\right|^2 f^N+CH(f^N| \bar{f}^N)+C,
\]
where $C$ are independent of $N$.

For $J_1$ term, direct applying Young's inequality, one has
\begin{multline*}
J_1\le 
 \sum_{i=1}^N \delta \int  \left|\nabla_{v_i} \log \frac{f^N}{\bar{f}^N}\right|^2 \E(f_m^{N,\theta}+f_m^{N,\tilde{\theta}_i}) 
 +\sum_{i=1}^N C\|b\|_{\infty}^2 \int \E\Big( \frac{\left| f_m^{N,\theta}-f_m^{N,\tilde{\theta}_i}\right|^2}{f_m^{N,\theta}+f_m^{N,\tilde{\theta}_i}} \Big) \\
 \le 2\delta  \sum_{i=1}^N \int_{\R^{Nd}}  \left|\nabla_{v_i} \log \frac{f^N}{\bar{f}^N}\right|^2f^N+C\sum_{i=1}^N \int_{\R^{Nd}}  \E\left(\frac{\left| f_m^{N,\theta}-f_m^{N,\tilde{\theta}_i}\right|^2}{f_m^{N,\theta}+f_m^{N,\tilde{\theta}_i}} \right).
\end{multline*}
Applying Proposition \ref{pro:localerrors} for the local errors, one has
\begin{gather*}
\sum_{i=1}^N \int_{\R^{Nd}}  \E\left(\frac{\left| f_m^{N,\theta}-f_m^{N,\tilde{\theta}_i}\right|^2}{f_m^{N,\theta}+f_m^{N,\tilde{\theta}_i}} \right)\le C(T)N\Delta t^2,
\end{gather*}
where $C(T)$ is independent of $N$.

The term $I_2$ can be split into
\begin{multline*}
I_2=-\frac{1}{2}\sum_{i=1}^N\int\Bigg(\nabla_{v_i}\left(\log \frac{f^N}{\bar{f}^N}\right)\otimes \nabla_{v_i}\left(\log \frac{f^N}{\bar{f}^N}\right) : \frac{1}{N-1}\sum_{j\neq i}A(v_i-v_j)\Bigg)f^N \\
-\frac{1}{2}\sum_{i=1}^N\int\Bigg( \nabla_{v_i}\left(\log \frac{f^N}{\bar{f}^N}\right)\otimes\nabla_{v_i}\log\bar{f}^N : [\frac{1}{N-1}\sum_{j\neq i}A(v_i-v_j)-A(v_i; \bar{f})] \Bigg) f^N.
\end{multline*}
By the uniform positive definiteness of $A$, one then has
\begin{multline*}
I_2\le -\kappa \int_{\R^{Nd}} \sum_{i=1}^N \left|\nabla_{v_i} \log \frac{f^N}{\bar{f}^N}\right|^2 f^N
+\sum_{i=1}^N \delta \int  \left|\nabla_{v_i} \log \frac{f^N}{\bar{f}^N}\right|^2 f^N\\
+C(\delta)\sum_{i=1}^N \int_{\R^{Nd}} |\nabla_{v_i}\log\bar{f}^N|^2 \Big\|\frac{1}{N-1}\sum_{j\neq i}A(v_i-v_j)-A(v_i; \bar{f}) \Big\|^2 f^N.
\end{multline*}
Note that $|\nabla_{v_i}\log\bar{f}^N|
=|\nabla_{v_i}\log \bar{f}(t, v_i)|$ and we will show that $|\nabla_{v_i}\log \bar{f}(t, v_i)|\le C(1+|v_i|)$ in Section~\ref{sec:meanfieldproperty} under Assumptions \ref{ass:KA1} and \ref{ass:A2}. The power of $|v_i|$ is crucial here. With this bound, similar as we did for $J_2$, by combining with Lemma \ref{lmm:youngIneq} and Lemma \ref{lmm:concentration}, we then conclude that
\[
\sum_{i=1}^N \int_{\R^{Nd}} |\nabla_{v_i}\log\bar{f}^N|^2 \Big\|\frac{1}{N-1}\sum_{j\neq i}A(v_i-v_j)-A(v_i; \bar{f}) \Big\|^2 f^N 
\le CH(f^N |\bar{f}^N)+C,
\]
where $C$ is independent of $N$. See Proposition \ref{pro:I22} in Section~\ref{subsec:meandiffusion} for the details.

For $I_1$, similar as $J_1$, one again uses Young's inequality to obtain
\begin{multline*}
I_1\le 
 \sum_{i=1}^N 2 \delta \int_{\R^{Nd}}  \left|\nabla_{v_i} \log \frac{f^N}{\bar{f}^N}\right|^2 f^N \\
 +\sum_{i=1}^N C(\delta)\int_{\R^{Nd}} \E\left( \|A(v_i- v_{\theta^{(m)}(i)})\|^2  \frac{\left|\nabla_{v_i}f_m^{N,\theta}-\nabla_{v_i}f_m^{N,\tilde{\theta}_i}\right|^2}{f_m^{N,\theta}+f_m^{N,\tilde{\theta}_i}}  \right).
\end{multline*}
Applying Proposition \ref{pro:localerrors} for the local errors, one has
\begin{gather*}
\sum_{i=1}^N \int_{\R^{Nd}}  \E\left(\frac{\left| \nabla_{v_i}f_m^{N,\theta}-\nabla_{v_i}f_m^{N,\tilde{\theta}_i}\right|^2}{f_m^{N,\theta}+f_m^{N,\tilde{\theta}_i}} \right)\le C(T)N\Delta t,
\end{gather*}
where $C(T)$ is independent of $N$. This estimate is the term that does not have the optimal rate. See Section~\ref{subsec:localerr}
for the discussion.

Combining everything together, one has
\[
\frac{d}{dt}H(f^N | \bar{f}^N)
\le  (-\kappa+6\delta) \int_{\R^{Nd}} \sum_{i=1}^N \left|\nabla_{v_i} \log \frac{f^N}{\bar{f}^N}\right|^2 f^N+CH(f^N | \bar{f}^N)+C(1+N\Delta t).
\]
Taking $\delta=\kappa/6$ and applying G\"onwall's inequality, one finds that
\[
H(f^N | \bar{f}^N)\le C(T)(1+N\Delta t).
\]

Lastly, according to Lemma \ref{lmm:orderentropy}, one finds that
\[
H(f^{(j)} | \bar{f}^{\otimes j} )\le \frac{j}{N}H(f^N | \bar{f}^N)\le C(T)j\left(\frac{1}{N}+\Delta t\right).
\]
Since $\bar{f}$ has Gaussian tails, one can then apply the transport inequalities \cite{talagrand1996transportation,bobkov2001hypercontractivity} to obtain the bounds on the Wasserstein distances. Moreover, the Pinsker inequality \cite{pinsker1964information,bolley2005weighted} gives the bound on the total variation norm. The last claim is then justified.
\end{proof}

\section{The solution to the mean field equation}\label{sec:meanfieldproperty}

In this section, we present the estimates for the solution $\bar{f}$ to the mean field equation \eqref{eq:mckeanlandau}
\begin{gather}
\partial_t \bar{f}=-\nabla\cdot\left( K*\bar{f}(t, v) \bar{f}(t, v)\right)+\frac{1}{2}\nabla^2:\left(A*\bar{f}(t, v) \bar{f}(t, v) \right).
\end{gather}
Since we have assumed that $K$ and $A$ are bounded, and that $A$ is uniformly positive definite, it is standard to show that 
$\bar{f}$ exists and unique. Moreover, by bootstrapping, $\bar{f}$ is smooth. In this section, we mainly focus on the 
tail estimates of $\bar{f}$.

The following is a basic result for parabolic equations.
\begin{lemma}\label{lmm:taildensity}
Suppose that Assumption \ref{ass:KA1} for the kernels hold and that the initial density $f_0$ satisfies the Gaussian tail bounds in Assumption \ref{ass:initial}.
Then there exists $C_i=C_i(T)$ and $\alpha_i=\alpha_i(T)$ ($i=1,2$) such that
\begin{gather}
C_1\exp(-\alpha_1 |v|^2)\le  \bar{f}(t, v)\le C_2\exp(-\alpha_2 |v|^2).
\end{gather}
\end{lemma}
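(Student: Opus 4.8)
The plan is to establish the two-sided Gaussian bound via the maximum principle, treating the nonlocal equation as a linear parabolic equation with bounded coefficients. First I would write $\bar f$ as a solution of $\partial_t \bar f = \frac12 \sum_{kl} a_{kl}(t,v)\partial_{v_k}\partial_{v_l}\bar f + \sum_k c_k(t,v)\partial_{v_k}\bar f + c_0(t,v)\bar f$ where $a(t,v) = A*\bar f(t,v)$, and $c_k, c_0$ are built from $\nabla A * \bar f$, $\nabla^2 A*\bar f$, and $\nabla (K*\bar f)$. By Assumption \ref{ass:KA1} (boundedness of $K$, $A$ and derivatives up to third order, plus $A \succeq \kappa I$) and the fact that $\bar f(t,\cdot)$ is a probability density, all these coefficients are bounded uniformly on $[0,T]\times\R^d$, and $a$ is uniformly elliptic with ellipticity constant $\kappa$. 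This reduces the problem to a scalar linear parabolic PDE with bounded measurable (in fact smooth) coefficients.

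Next I would prove the two bounds separately by comparison with explicit Gaussian super/subsolutions. For the upper bound, I would look for a barrier of the form $\bar g(t,v) = C_2(t)\exp(-\alpha_2(t)|v|^2)$ and compute $\partial_t \bar g - \frac12 a:\nabla^2\bar g - c\cdot\nabla\bar g - c_0\bar g$; the dominant term at large $|v|$ is $-\frac12 a:\nabla^2\bar g \approx -2\alpha_2(t)^2 (a(t,v)v)\cdot v\, \bar g \le -2\kappa\alpha_2(t)^2|v|^2 \bar g$ up to lower-order terms controlled by $\|c\|_\infty |v|$ and $\|c_0\|_\infty$. Choosing $\alpha_2(t)$ decreasing appropriately (solving an ODE like $\dot\alpha_2 = -\Lambda_* \alpha_2^2$ for suitable $\Lambda_*$ depending on the bounds and $\kappa$) and $C_2(t)$ growing to absorb the constant terms makes $\bar g$ a supersolution; since $f_0 \le c_2 e^{-\alpha_2|v|^2} = \bar g(0,\cdot)$ by Assumption \ref{ass:initial}, the maximum principle (applied e.g. to $\bar f - \bar g$, after checking decay at infinity so the maximum principle on the whole space is valid) gives $\bar f \le \bar g$ on $[0,T]$. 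For the lower bound, I would symmetrically take $\underline g(t,v) = C_1(t)\exp(-\alpha_1(t)|v|^2)$ with $\alpha_1(t)$ increasing and $C_1(t)$ decreasing, verify it is a subsolution using the same computation with the ellipticity upper bound $\|a\|_\infty$ in place of $\kappa$, and conclude $\bar f \ge \underline g$ from $f_0 \ge c_1 e^{-\alpha_1|v|^2}$.

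Alternatively — and perhaps cleaner to cite — one can invoke the Aronson-type Gaussian two-sided bounds for fundamental solutions of uniformly parabolic operators with bounded coefficients (e.g. Aronson, or Friedman's parabolic PDE book): the fundamental solution $p(s,x;t,y)$ satisfies $c_- (t-s)^{-d/2}e^{-C_-|x-y|^2/(t-s)} \le p \le c_+(t-s)^{-d/2}e^{-C_+|x-y|^2/(t-s)}$, and then $\bar f(t,v) = \int p(0,w;t,v) f_0(w)\,dw$; convolving the Gaussian kernel bound against the Gaussian bounds on $f_0$ produces again Gaussian upper and lower bounds in $v$ with $T$-dependent constants. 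The zeroth-order term $c_0 \bar f$ needs a small amount of care (it is not in divergence form and changes sign), but since $c_0$ is bounded it only contributes a factor $e^{\pm\|c_0\|_\infty T}$ via Duhamel/Gronwall, which is harmless.

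The main obstacle I anticipate is the lower bound: positivity of $\bar f$ for all time is not automatic from the equation written in non-divergence form with a sign-indefinite zeroth-order coefficient, and one must either justify the comparison principle on the whole space $\R^d$ (which requires knowing a priori that $\bar f$ and the barrier decay fast enough — a mild growth/decay assumption, true here since $\bar f$ is a probability density that one can first show decays faster than any polynomial by energy estimates) or appeal to a parabolic Harnack inequality / positivity of the fundamental solution. Everything else — boundedness of coefficients, uniform ellipticity, the barrier ODEs — is routine given the assumptions already in force.
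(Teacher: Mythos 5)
Your second approach (citing two-sided Gaussian bounds for the fundamental solution and then convolving with $f_0$) is essentially the paper's own argument. The paper first freezes the coefficients, viewing the mean-field equation as a \emph{linear} Fokker--Planck equation in divergence form, $\partial_t \bar f = \nabla\cdot(\bar b \bar f) + \nabla^2\!:(\bar a\bar f)$, with $\bar b = -K*\bar f$ bounded and Lipschitz and $\bar a = \tfrac12 A*\bar f$ bounded and uniformly elliptic. It then cites Sheu (1991) for two-sided Gaussian bounds on the transition density $G(v,v';t)$ of this operator and writes $\bar f(t,v) = \int f_0(v') G(v,v';t)\,dv'$; convolving the Gaussian kernel bounds with the Gaussian bounds on $f_0$ gives the claim with $T$-dependent constants. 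Working in Fokker--Planck (divergence) form is the reason the paper does not need to worry about your sign-indefinite zeroth-order coefficient: the Green's function is a transition probability and hence nonnegative, so both the positivity of $\bar f$ and the lower Gaussian bound come for free, and no $e^{\pm\|c_0\|_\infty T}$ correction is needed.

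Your primary route via explicit Gaussian barriers and the maximum principle is a genuinely different and more elementary argument, and it would also work. However, you have correctly put your finger on its weak point: the comparison principle on all of $\R^d$ requires a priori control of $\bar f - \bar g$ at infinity, and establishing that $\bar f$ decays fast enough to make the comparison valid has some circularity with the conclusion you want. This can be patched (e.g.\ by moment bounds, a cutoff, or first proving sub-Gaussian decay with a worse constant and bootstrapping), but the Green's-function route bypasses it entirely. Two smaller remarks on the barrier computation: (i) for the \emph{upper} supersolution the worst case of the quadratic term $-2\alpha_2^2 (av)\cdot v$ is controlled by $\|a\|_\infty$, not by $\kappa$ (the ellipticity lower bound $\kappa$ enters for the \emph{lower} barrier), so your ODE for $\alpha_2$ should read $\dot\alpha_2 \le -2\alpha_2^2\|a\|_\infty$; (ii) you write the equation in non-divergence form with a zeroth-order term, while the paper's divergence form sidesteps that term altogether.
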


\begin{proof}[Sketch of the proof of Lemma \ref{lmm:taildensity}]
This result is standard so we only sketch the argument here. First, the existence and uniqueness of the solution $\bar{f}$ is straightforward with the strong assumptions on the kernels (see for example \cite{desvillettes2000spatially} for the argument that can be applied here).
Then, for the unique weak solution, $\bar{b}(t, v):=-K*\bar{f}(t,v)$ is bounded and Lipschitz while $\bar{a}(t, v):=\frac{1}{2}A*\bar{f}(t,v)$ is bounded and uniformly 
positive definite. Then, we regard $\bar{b}$ and $\bar{a}$ as given. 
Then, the solution $\bar{f}$ is also the solution to the linear Fokker-Planck equation
with coefficients $\bar{b}$ and $\bar{a}$:
\begin{gather}
\partial_t \bar{f}=\nabla\cdot(\bar{b}(t, v)\bar{f})+\nabla_v^2:(\bar{a}(t,v)\bar{f}).
\end{gather} 
The Green's function (transition probability) is then controlled by (see, for example, \cite{sheu1991some})
\begin{gather}
 C t^{-d/2}\exp\left(-\frac{\alpha |v-v'|^2}{t}\right) \le G(v, v'; t)\le C' t^{-d/2}\exp\left(-\frac{\alpha' |v-v'|^2}{t}\right)
\end{gather}
for some positive constants $C,C', \alpha, \alpha'$ (depending on $T$). Then, $\bar{f}(t, v)=\int f_0(v')G(v, v'; t)dv'$ has the desired control.
\end{proof}

The result in Lemma \ref{lmm:taildensity} clearly indicates that
\begin{gather}
|\log \bar{f}(t,v)|\le C(1+|v|^2),
\end{gather}
for some constant $C$ depending on $T$.

Next, we are devoted to obtain the Li--Yau type estimates of $\nabla \log\bar{f}$. See \cite{li1986parabolic} for the original Li--Yau estimates for heat equations. 
\begin{theorem}\label{pro:gradlogdensity}
We have the following bounds on the gradient of $\log \bar{f}$.
\begin{enumerate}[(i)]
\item  Suppose that Assumption \ref{ass:KA1} holds, and $\sup_v |\nabla_v\log f_0|/(1+\log \|f_0\|_{\infty}-\log f_0)<\infty$. Then it holds that
\begin{gather}
|\nabla_v \log \bar{f}| \le C(1+\log \|\bar{f}\|_{L^{\infty}([0, T]\times \R^d)}+|\log \bar{f}|)\le C'(1+|v|^2).
\end{gather}
\item Suppose that Assumption \ref{ass:KA1} and \ref{ass:A2} hold, and
$\sup_v |\nabla_v \log f_0|^2/(1+\log \|f_0\|_{\infty}-\log f_0)<\infty$. Then, it holds that
\begin{gather}
|\nabla_v \log \bar{f}| \le C\sqrt{1+\log \|\bar{f}\|_{L^{\infty}([0, T]\times \R^d)}+|\log \bar{f}|}\le C'(1+|v|).
\end{gather}
\end{enumerate}
\end{theorem}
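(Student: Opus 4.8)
The plan is to prove both bounds via the Li--Yau method: derive a parabolic differential inequality for an auxiliary quantity built from $w := \log(M/\bar f)$, where $M$ is a constant dominating $\bar f$ (by Lemma~\ref{lmm:taildensity}, $M = \|\bar f\|_{L^\infty([0,T]\times\R^d)}$ works), so that $w \ge 1$ after shifting by a constant if needed; then apply the maximum principle on $[0,T]\times\R^d$. Writing the mean-field equation \eqref{eq:mckeanlandau} in non-divergence form, $\partial_t \bar f = \bar a_{k\ell}\partial_{k\ell}\bar f + \bar B_k \partial_k \bar f + \bar c\, \bar f$ with $\bar a = \tfrac12 A*\bar f$ uniformly elliptic and bounded, $\bar B$ and $\bar c$ bounded (using Assumption~\ref{ass:KA1} and the smoothness/boundedness of the convolved kernels), one gets for $w$ an equation of the form $\partial_t w = \bar a_{k\ell}\partial_{k\ell} w - \bar a_{k\ell}\partial_k w\,\partial_\ell w + \bar B_k\partial_k w + (\text{bounded})$. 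The quadratic gradient term has the \emph{good} sign, which is what makes the estimate work.

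For part (i), I would consider $g := |\nabla w|^2 / w$ (or a small variant like $|\nabla w|^2/(w + K_0)$ to keep denominators away from zero), compute $(\partial_t - \bar a_{k\ell}\partial_{k\ell})g$, and use Bochner-type manipulations: the term $-\bar a_{k\ell}\partial_k w\,\partial_\ell w = -|\nabla w|_{\bar a}^2$ in the $w$-equation produces, after differentiation, a term of order $-|\nabla w|^4/w^2 \sim -g^2$ that dominates the error terms coming from $\partial \bar a$, $\bar B$, $\bar c$ and from the cross terms $\nabla w \cdot \nabla(\text{Hess } w)$ handled via Cauchy--Schwarz. The bad terms are at most linear or quadratic in $g$ with coefficients controlled by $\|\bar a\|_{C^1}$, $\|\bar B\|_\infty$, etc. (all finite under Assumption~\ref{ass:KA1}), so at an interior maximum of $g$ one obtains $g \le C(1 + w)$, i.e. $|\nabla w|^2 \le C w(1+w) \le C'(1+w)^2$, hence $|\nabla \log\bar f| = |\nabla w| \le C(1+w) = C(1+\log M - \log\bar f) \le C'(1+|v|^2)$ by Lemma~\ref{lmm:taildensity}. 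To justify the maximum principle on the unbounded domain I would either use the Gaussian bounds on $\bar f$ and its derivatives (parabolic regularity) to check $g$ decays at spatial infinity, or insert a cutoff/penalization $\eta_R(v)$ and let $R\to\infty$; the initial-time bound on $g$ is exactly the hypothesis $\sup_v |\nabla\log f_0|^2/(1+\log\|f_0\|_\infty - \log f_0) < \infty$.

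For part (ii), the stronger conclusion $|\nabla w| \le C\sqrt{1+w}$ requires squeezing an extra power out of the gradient term, and this is where Assumption~\ref{ass:A2} enters: I would instead study $h := |\nabla w|^2/(1+w)$ wait---no, that only reproduces (i); rather I expect the right quantity is something like $h := |\nabla w|^2$ directly, whose evolution contains $-c\,|\text{Hess }w|^2 \le -\tfrac{c}{d}|\nabla w|^4/(\dots)$ only after using the PDE to replace the Laplacian of $w$; the decay $\|\nabla A\| \lesssim (1+|v|)^{-1}$ controls the first-order coefficients' growth so that the forcing terms in the $h$-equation are $O(1 + |v|^2) \sim O(1+w)$ rather than $O((1+|v|^2)|\nabla w|)$, and then $-h^2$ beats $O(1+w)$ at a maximum to give $h \le C(1+w)$, i.e. $|\nabla w|^2 \le C(1+w)$. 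The main obstacle, and the step I'd budget the most care for, is this bookkeeping in part (ii): identifying the precise auxiliary function and showing that under Assumption~\ref{ass:A2} every error term in its parabolic inequality is genuinely $O(1+w)$ (not $O(|v|^2 + |v||\nabla w|)$), since it is exactly the linear-in-$v$ growth of $\nabla\bar f / \bar f$—needed in the proof of Theorem~\ref{thm:chaosforLandau} for the concentration estimate via Lemma~\ref{lmm:concentration}—that is at stake. The reduction from $w$-bounds to $v$-bounds in both parts is then immediate from Lemma~\ref{lmm:taildensity}.
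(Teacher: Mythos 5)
You have the right method (Li--Yau gradient estimates via a maximum principle with a cutoff on the nondivergence-form equation for $u=\log(\bar f/M)$, noting that the quadratic gradient term $\bar a:\nabla u\otimes\nabla u$ supplies a dissipative $\sim g^2$ contribution), and parts of the bookkeeping, but the two auxiliary quantities are effectively swapped relative to what the argument needs, and for part (ii) the key technical device is missing.

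For part (i) you propose $g=|\nabla w|^2/w\sim|\nabla u|^2/(1-u)$ and conclude only that $g\le C(1+w)$ at the maximum, then patch this up using Lemma~\ref{lmm:taildensity}. The paper instead uses $g=|\nabla u|^2/(1-u)^2$. With the square in the denominator, all the error terms can be organized so that at the max one gets a bound $\psi g^2\le\frac{4M}{\kappa}(g+1)$, i.e.\ a \emph{constant} bound $\psi g\le\max(1,8M/\kappa)$, with only Assumption~\ref{ass:KA1}. This delivers $|\nabla u|\le C(1-u)$ directly; the Gaussian tail is needed only for the purely notational second inequality $\le C'(1+|v|^2)$. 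Your version, with the linear denominator, leaves a $g^{3/2}\sqrt{1-u}$-type term (coming from the first-order coefficient $\nabla a$) that cannot be absorbed by $g^2$ without Assumption~\ref{ass:A2}, so the maximum-principle step yields a $v_0$-dependent bound and you would need both the upper and lower Gaussian bounds to close. That can probably be made to work, but the clean route is to upgrade the denominator to $(1-u)^2$, which is what matches the claimed conclusion $|\nabla u|\lesssim 1+|u|$ rather than $|\nabla u|\lesssim\sqrt{1+|u|}$.

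For part (ii) you explicitly consider and then reject $h=|\nabla w|^2/(1+w)$ in favor of $h=|\nabla w|^2$; this is the opposite of what works. The paper's auxiliary for (ii) is exactly $g=|\nabla u|^2/(1-u)$. The point of the $(1-u)^{-1}$ normalization is that the term $\frac{a^{ij}u_ku_ku_iu_j}{(1-u)^2}\ge\kappa g^2$ gives a genuine $-\kappa g^2$ dissipation; with $h=|\nabla w|^2$ there is no such quartic term, and $-|\mathrm{Hess}\,w|^2$ alone does not dominate $h^2$, so the scheme does not close. Moreover, the two ingredients that actually make (ii) work under Assumption~\ref{ass:A2} are absent in your proposal: (a) the observation that $\|\nabla a(\cdot)\|\lesssim(1+|v|)^{-1}$ together with the lower Gaussian bound on $\bar f$ gives $\|\nabla a\cdot\sqrt{1-u}\|_\infty<\infty$, which turns the troublesome $\frac{u_ka^{ij}_ku_iu_j}{1-u}$ term into $O(g^{3/2})$ (absorbable by $g^2$), and (b) the cutoff function must be taken with radius $\sim(1+|v_0|)$, i.e.\ $\psi(v)=\tilde\psi((1+2|v_0|)^{-1}|v|)$ with $|\tilde\psi'|\le C_\delta\tilde\psi^{1-\delta}$, precisely so that $|\nabla\psi|\lesssim(1+|v_0|)^{-1}\lesssim(1-u)^{-1/2}$ on its support, turning the cutoff error in $\cL(\psi)$ into an $O(g^{1/2})$ term. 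Without the scaling cutoff, the cutoff error scales as $g^{3/2}\sqrt{1-u}$ and you lose the gain from Assumption~\ref{ass:A2}.
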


\begin{proof}
Here, we regard $\bar{b}$ and $\bar{a}$ as known and consider the linear equation.
Let $M=\|\bar{f}\|_{L^{\infty}([0, T]\times \R^d)}$ and define
\[
u=\log \frac{\bar{f}}{M}\le 0.
\]
Then, $u$ satisfies
\[
\partial_t u=\bar{a}:(\nabla^2 u+\nabla u\otimes \nabla u)+(\bar{b}+\nabla\cdot\bar{a})\cdot \nabla u+(\nabla\cdot\bar{b}
+\nabla^2:\bar{a}).
\]

For notational convenience, we denote
\[
a=(a^{ij}):=\tilde{a},\quad b=(b^i):=\bar{b}+\nabla\cdot \bar{a},
\quad c:=\nabla\cdot\bar{b}+\nabla^2:\bar{a}.
\]
Moreover,  for $v=(v^1,\cdots, v^d)\in \R^d$,  we denote $u_i:=\frac{\partial u}{\partial v^i}$. Moreover, the Einstein summation convention is used, namely repeated indices that will be summed up. For example, $u_ku_k$ means $\sum_{k=1}^d u_k u_k$. 

(i)  Consider
\[
g=\frac{|\nabla u|^2}{(1-u)^2}=\frac{u_k u_k}{(1-u)^2}.
\]
Then,
\begin{multline*}
a^{ij}g_{ij}+b^i g_i-g_t=
2a^{ij}\frac{u_{ik}u_{jk}}{(1-u)^2}+8a^{ij}\frac{u_{ik}u_k u_j}{(1-u)^3}
+6a^{ij}\frac{u_ku_k u_i u_j}{(1-u)^4}
-4a^{ij}\frac{u_k u_{ik}u_j}{(1-u)^2}\\
-2a^{ij}\frac{u_ku_k u_i u_j}{(1-u)^3}-2c\frac{u_k u_k}{(1-u)^3}-2\frac{u_k(a_k^{ij}(u_{ij}+u_i u_j)+b^i_k u_i+c_k)}{(1-u)^2}
\end{multline*}
By observing that
\[
g_i=2\frac{u_k u_{ki}}{(1-u)^2}+2\frac{u_k u_k u_i}{(1-u)^3},
\]
one can then arrange the above expression into
\begin{gather*}
\begin{split}
\cL(g) :=&~a^{ij}g_{ij}+b^i g_i-g_t
-3a^{ij}\frac{u_j g_i}{1-u}+2a^{ij}u_j g_i\\
=&~2a^{ij}\frac{u_{ik}u_{jk}}{(1-u)^2}+2a^{ij}\frac{u_{ik}u_k u_j}{(1-u)^3}
+2a^{ij}\frac{u_ku_k u_i u_j}{(1-u)^3}\\
&~-2c\frac{u_k u_k}{(1-u)^3}-2\frac{u_k(a_k^{ij}(u_{ij}+u_i u_j)+b^i_k u_i+c_k)}{(1-u)^2}
\end{split}
\end{gather*}
Since
\[
a^{ij}\frac{u_{ik}u_{jk}}{(1-u)^2}+2a^{ij}\frac{u_{ik}u_k u_j}{(1-u)^3}
+a^{ij}\frac{u_ku_k u_i u_j}{(1-u)^4}\ge 0
\]
and $1-\frac{1}{1-u}\ge 0$, one then finds that
\begin{gather*}
\begin{split}
\cL g &\ge a^{ij}\frac{u_{ik}u_{jk}}{(1-u)^2}+a^{ij}\frac{u_ku_k u_i u_j}{(1-u)^3}
-\frac{2c}{(1-u)}g-2\frac{u_k(a_k^{ij}(u_{ij}+u_i u_j)+b^i_k u_i+c_k)}{(1-u)^2}\\
&\ge \frac{\kappa}{2}\frac{|\nabla^2 u|^2}{(1-u)^2}
+\frac{\kappa}{2}(1-u)g^2-2\|c\|_{\infty}g-C(M_1(g+1)+\|\nabla a\|_{\infty}(1-u)g),
\end{split}
\end{gather*}
where $M_1=\|\nabla a\|_{\infty}+\|\nabla b\|_{\infty}+\|\nabla c\|_{\infty}$.

Take a cutoff function $\tilde{\psi}(\cdot)$ defined on $[0,\infty)$  that is strictly positive on $[0,1)$, supported in $[0, 1]$ 
and is $1$ on $[0,1/2]$; and moreover, for any $\delta\in (0, 1)$, there exists $C_{\delta}>0$ such that
\[
|\tilde{\psi}'|\le C_{\delta}\tilde{\psi}^{1-\delta}.
\]
Such $\tilde{\psi}$ exists, for example, $\tilde{\psi}\sim \exp(-(1-r)^{-2})$
for $r\to 1^-$. 

Then, we take $\psi(v-v_0)=\tilde{\psi}(|v-v_0|)$ and compute
\[
\cL(\psi g)=\psi \cL g+g\cL \psi+2a^{ij}\psi_j g_i.
\]
Since $|\psi_i \psi_j/\psi|\le C$ by the condition on $\psi$, one then has
\[
2a^{ij}\psi_j g_i= 2a^{ij}\psi_j \psi^{-1} (\psi g)_i-2a^{ij}(\psi_i \psi_j/\psi)g
\ge 2a^{ij}\psi_j \psi^{-1} (\psi g)_i-C\|a\|_{\infty}g.
\]
Also,
\[
\begin{split}
g\cL\psi&=g[(a^{ij}\psi_{ij}+b^i\psi_i)-3a^{ij}\frac{u_j}{1-u}\psi_i
+2a^{ij}u_j\psi_i]\\
&\ge -C(g+|\nabla\psi|\sqrt{1-u}g^{3/2}).
\end{split}
\]
Using $|\nabla\psi|\le C_{\delta}\psi^{1-\delta}$, one finds that
$|\nabla\psi|\sqrt{1-u}g^{3/2}\le \epsilon\psi (1-u)g^2+C g$.
Then,
\[
\cL(g\psi)-2a^{ij}\psi_j \psi^{-1} (\psi g)_i\ge 
\frac{\kappa}{4}\psi (1-u)g^2-M(1-u)(g+1).
\]
Here, $M$ depends on the $W^{1,\infty}$ norms of $a,b,c$.

The function $\psi_j\psi^{-1}$ is well-defined in the interior of $B(v_0, 1)$.
Since $g\psi$ is zero on $[0, T]\times \partial B(v_0, 1)$, the maximum can be achieved in $(0, T]\times \mathrm{int}(B(v_0, 1))$ or achieved at $t=0$.
If the first case happens and the maximum point is $(t_1, v_1)$, then one has
\[
g(v_0, t)\le \psi(t_1, v_1)g(t_1, v_1)
\]
and
\[
0\ge \cL(g\psi)(t_1, v_1)-2a^{ij}\psi_j \psi^{-1} (\psi g)_i(t_1, v_1) \ge \frac{\kappa}{4}\psi (1-u)g^2-M(1-u)(g+1) \Big|_{(t_1, v_1)}.
\]
It follows then
\[
\psi(t_1, v_1)g(t_1, v_1)\le \frac{4M}{\kappa}\cdot\frac{g(t_1, v_1)+1}{g(t_1, v_1)}.
\]
If $g(t_1, v_1)\le 1$, then $\psi(t_1, v_1)g(t_1, v_1)\le 1$. Otherwise, the right-hand side is bounded by 
$8M/\kappa$. Hence, for all $t\in [0, T]$, one has in this case
\[
g(v_0, t)\le \psi(t_1, v_1)g(t_1, v_1)\le \max(1, \frac{8M}{\kappa}).
\]
If the maximum is achieved at the initial point, then
\[
g(v_0, t)\le \sup_{B(v_1,1)}g(\cdot, 0).
\]
The result then holds.

(ii)  The computation is similar as part (i). Here, we consider the auxiliary 
function 
\[
g=\frac{|\nabla u|^2}{1-u}.
\]
By the conditions given
\[
\|g(\cdot, t=0)\|_{\infty}<+\infty.
\]
Direct calculation reveals that
\begin{gather*}
\begin{split}
a^{ij}g_{ij}+b^i g_i-g_t
= &~\frac{2a^{ij}u_{ki}u_{kj}}{1-u}+\frac{4a^{ij}u_k u_{ki}u_j}{(1-u)^2}
-\frac{4a^{ij}u_k u_{ki}u_j}{1-u}\\
&+\frac{2a^{ij}u_ku_k u_iu_j}{(1-u)^3}
-\frac{a^{ij}u_ku_k u_iu_j}{(1-u)^2}-R\\
=&~\frac{2a^{ij}u_{ki}u_{kj}}{1-u}+\frac{a^{ij}u_ku_k u_i u_j}{(1-u)^2}+\frac{2ua^{ij}u_j}{1-u}g_i-R,
\end{split}
\end{gather*}
where
\begin{gather*}
R=c\frac{u_ku_k}{(1-u)^2}+\frac{2u_k}{1-u}(a_k^{ii}u_{ij}+a_k^{ij}u_iu_j
+b_k^iu_i+c_k).
\end{gather*}
We then consider
\[
\cL(g):=a^{ij}g_{ij}+b^i g_i-g_t-\frac{2ua^{ij}u_j}{1-u}g_i
=\frac{2a^{ij}u_{ki}u_{kj}}{1-u}+\frac{a^{ij}u_ku_k u_i u_j}{(1-u)^2}-R,
\]
and find that
\[
\cL(g)\ge (\kappa-\delta)\frac{|\nabla^2 u|^2}{1-u}
+\kappa g^2-Mg-\|\nabla c\|_{\infty}\frac{\sqrt{g}}{\sqrt{1-u}}
-\frac{2u_k}{1-u}a_k^{ij}u_iu_j,
\]
where $M=\|\nabla a\|_{\infty}+\|\nabla b\|_{\infty}$.

Since $a=( A*\bar{f})(v)$, by Assumption \ref{ass:A2}, one immediately has
\[
\sup_{t\le T}\|\nabla a(v, t)\|\le C(T)(1+|v|)^{-1}.
\]
Consequently, 
\[
\|a_k^{ij} \sqrt{1-u}\|_{\infty}<\infty.
\]
Hence, 
\[
\cL(g)\ge (\kappa-\delta)\frac{|\nabla^2 u|^2}{1-u}
+\kappa g^2-Mg-\|\nabla c\|_{\infty}\frac{\sqrt{g}}{\sqrt{1-u}}
-M g^{3/2}.
\]

Now, for fixed $v_0$, we take the cutoff function to be 
\[
\psi(v)=\tilde{\psi}((1+2|v_0|)^{-1}|x|),
\]
where $\tilde{\psi}$ is the same as in Part (i). 
Then, $\psi$ is supported in $B(0,1+2|v_0|)$, and equals $1$ inside $B(0, 1/2+|v_0|)$ including $v_0$.

Then, one has
\[
\cL(\psi g)=\psi \cL(g)+g\cL(\psi)+2a^{ij}\psi_j g_i.
\]
Similarly as above, $|\psi_i \psi_j|/\psi\le C(1+2|v_0|)^{-2}\le C$ so that
\[
2a^{ij}\psi_j g_i= 2a^{ij}\psi_j \psi^{-1} (\psi g)_i-2a^{ij}(\psi_i \psi_j/\psi)g
\ge 2a^{ij}\psi_j \psi^{-1} (\psi g)_i-C\|a\|_{\infty}g.
\]
For $\cL(\psi)$, we only need to consider the term:
\[
|-\frac{2ua^{ij}u_j}{1-u}\psi_i|
\le \frac{C|u|}{1-u}|\nabla u|(1+2|v_0|)^{-1} 1_{|v|\le 1+2|v_0|}.
\]
Note that for $|v|\le 1+2|v_0|$, $\sqrt{1-u(v, t)}\le C(T)(1+|v|)$ and thus
\[
(1+2|v_0|)^{-1} 1_{|v|\le 1+2|v_0|}\le C\frac{1}{\sqrt{1-u(v, t)}}.
\]
This is the reason why we take such a cutoff.  Consequently, this term is controlled by $Cg^{1/2}$. 
Hence,
\[
\cL(g\psi)-2a^{ij}\psi_j \psi^{-1} (\psi g)_i \ge \psi\frac{\kappa}{2}\left(\frac{|\nabla^2 u|^2}{1-u}
+ g^2\right)-C[g^{3/2}+1].
\]
Performing the same argument as above gives that 
$g(v_0, t)$ is bounded by a constant (independent of $v_0$).
\end{proof}

\section{Estimates of the density for the particle system}\label{sec:Npardensity}

In this section, we aim to estimate the integral version of $|\partial^{\alpha} \log \tilde{f}^{\b{\theta}}|^p$
for the density of the particle system with various derivatives $\alpha$. 
The main technical difficulty is that we need the estimates to scale linearly with the particle number. 
As we shall see, we will introduce a new advection term to address the estimates for the first derivatives.

\subsection{Main estimates for derivatives of logarithmic density}\label{sec:resultsdensity}

Usually, the pointwise estimates of such quantities are based on the Li--Yau estimate. For multiplicative noises, we find it would be more convenient to estimate the integrated version directly.

The first class of quantities we are interested in is the following one, which is the generalized Fisher information:
\begin{gather}
I_p^{i}(t):=\int_{\R^{Nd}}  |\nabla_{v_i} \log \tilde{f}^{\b{\theta}}(t, v)|^p\tilde{f}^{\b{\theta}}(t, v)dv.
\end{gather}
The second class of quantities is following second order derivative estimate
\begin{gather}
H_p^{i}(t):=\int_{\R^{Nd}}  |\nabla_{v_i}^2 \log \tilde{f}^{\b{\theta}}(t, v)|^p
\tilde{f}^{\b{\theta}}(t, v)dv.
\end{gather}
We will consider the following quantity
\begin{gather}
\tilde{K}^i(t)=H_2^{i}(t)+\gamma (I_{2}^{i}(t)+I_{4}^{i}(t)).
\end{gather}

The following estimates are some important local estimates for the particle density. The proof of this lemma is quite involved, and we leave it to Subsection~\ref{subsec:proofparticledensity}.
\begin{lemma}\label{lmm:onestepstability}
For any $i\in \{1,\cdots, N\}$, let $\theta(i)$ being its batchmate at $t_m$.
Then it holds for some $\gamma>0$ and all $t\in [t_m, t_{m+1}]$ that
\begin{gather}
I_p^{i}(t)+I_p^{\theta(i)}(t)\le e^{C(t-t_m)}(I_p^{i}(t_m)+I_p^{\theta(i)}(t_m))+C(t-t_m),\\
\tilde{K}^i(t)+\tilde{K}^{\theta(i)}(t)\le e^{C(t-t_m)}(\tilde{K}^{i}(t_m)+\tilde{K}^{\theta(i)}(t_m))+C(t-t_m),
\end{gather}
where $C$ is independent of $N$. 
\end{lemma}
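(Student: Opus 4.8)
The plan is to derive a system of differential inequalities for the integrated quantities $I_p^i$ and $\tilde K^i$ along the linear Fokker--Planck flow \eqref{eq:jointrbm} on the interval $(t_m,t_{m+1})$, and then close the coupled pair $(i,\theta(i))$ via Gr\"onwall. Since $\theta(i)$ is the only particle interacting with $i$ during this interval, the right-hand sides of the evolution equations for $\log\tilde f^{\b\theta}$ restricted to the $v_i$- and $v_{\theta(i)}$-derivatives involve only $A(v_i-v_{\theta(i)})$, $K(v_i-v_{\theta(i)})$ and their derivatives, all of which are bounded by Assumption \ref{ass:KA1}. First I would write $u:=\log\tilde f^{\b\theta}$ and record the Fokker--Planck equation it satisfies, namely $\partial_t u = a^{k\ell}(\partial_{k\ell}u+\partial_k u\,\partial_\ell u)+b^k\partial_k u + c$, where $a,b,c$ are built from $A,K$ and their derivatives evaluated at the batch differences; here the relevant second-order operator for particle $i$ is $a=\tfrac12 A(v_i-v_{\theta(i)})$, which is uniformly elliptic with constant $\kappa$ by Assumption \ref{ass:KA1}.

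Next I would differentiate: applying $\partial_{v_i}$ (resp.\ $\partial_{v_i}^2$) to the equation for $u$ and then testing against the appropriate power of $|\partial_{v_i}u|$ (resp.\ $|\partial_{v_i}^2u|$) times $\tilde f^{\b\theta}$ and integrating by parts, one obtains $\frac{d}{dt}I_p^i$, $\frac{d}{dt}H_2^i$ in terms of (a) a good negative term coming from the ellipticity of $A$ — something like $-c_p\kappa\int |\nabla_{v_i}u|^{p-2}|\nabla_{v_i}^2 u|^2\tilde f^{\b\theta}$ — and (b) error terms controlled by $\|A\|$, $\|\nabla A\|$, $\|\nabla^2 A\|$, $\|K\|$, $\|\nabla K\|$, $\|\nabla^2 K\|$ times lower powers of the derivatives of $u$. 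The crucial structural point is that every error term must be bounded by $C(I_p^i+I_p^{\theta(i)}+\tilde K^i+\tilde K^{\theta(i)}+1)$ with $C$ independent of $N$; this is exactly why one works with the \emph{pair} $(i,\theta(i))$ rather than a single particle, since derivatives in $v_i$ of $A(v_i-v_{\theta(i)})$ produce terms with $\nabla_{v_{\theta(i)}}u$, and symmetrically. The parameter $\gamma>0$ in $\tilde K^i=H_2^i+\gamma(I_2^i+I_4^i)$ is then chosen small enough that the cross terms between the $H_2$-evolution and the $I_2,I_4$-evolution are dominated by the respective good terms — this is the usual trick of adding a small multiple of the lower-order Fisher informations to absorb the bad terms generated when estimating $H_2$ (terms involving $\nabla^3 u$, which we do not control, must cancel or be hidden inside the elliptic good term via Cauchy--Schwarz).

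The main obstacle I anticipate is handling the third-order derivative terms that appear when differentiating the equation for $u$ twice and estimating $H_2^i$: the quantity $\nabla_{v_i}^3 u$ is not among the controlled quantities, so after integration by parts one must arrange that all occurrences of $|\nabla_{v_i}^3 u|^2$ appear with a favorable sign (from the $a^{k\ell}$ ellipticity) and that the remaining cross terms of the form $\|\nabla A\|\,|\nabla_{v_i}^3 u|\,|\nabla_{v_i}u|$ are absorbed by Young's inequality into that good term plus $C(1+I_2^i+I_4^i)$. A secondary difficulty is the multiplicative (non-constant) nature of the diffusion $A(v_i-v_{\theta(i)})$: the commutators $[\partial_{v_i},a^{k\ell}\partial_{k\ell}]$ generate lower-order terms with coefficients $\nabla A$, $\nabla^2 A$ that are, however, globally bounded by Assumption \ref{ass:KA1}, so they contribute only $C$ times controlled quantities — one just has to track them carefully rather than estimate pointwise. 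Once all error terms are of the form $C(1+I_p^i+I_p^{\theta(i)}+\tilde K^i+\tilde K^{\theta(i)})$, adding the inequalities for $i$ and $\theta(i)$ and applying Gr\"onwall on $[t_m,t]$ yields both claimed bounds with $C$ independent of $N$, since $N$ never enters any of these per-pair estimates.
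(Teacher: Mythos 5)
Your overall framework — writing $h=\log\tilde f^{\boldsymbol\theta}$, deriving differential inequalities for the weighted integrals, exploiting the pair structure so that all coefficients are bounded and $N$ never enters, using the ellipticity of $A$ to produce a dissipative term, and closing with Gr\"onwall — matches the paper's strategy. However, there is a genuine gap at the central step, and your claim that ``every error term must be bounded by $C(I_p^i+I_p^{\theta(i)}+\tilde K^i+\tilde K^{\theta(i)}+1)$'' does not hold with the terms you list.

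The Hopf--Cole transformed equation for $h$ contains the quadratic nonlinearity $\tilde a^k:\nabla_{x_k}h\otimes\nabla_{x_k}h$. Because the diffusion coefficient is non-constant, differentiating in $v_\alpha$ spills out the term $a^k_\alpha:h_k\otimes h_k$, and after pairing with $\nabla\varphi(h_\alpha)\sim|h_\alpha|^{p-2}h_\alpha$ one gets an error of size $|h_\alpha|^{p-1}|h_k|^2\sim|h_\alpha|^{p+1}$, i.e. one degree \emph{above} what $I_p$ controls. This cannot be absorbed by Young's inequality against $I_p$ or against the Fisher-type dissipation $\int e^h|h_\alpha|^{p-2}\sum_j|h_{\alpha j}|^2$ without an additional idea. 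The paper's resolution — which is in fact highlighted as the main technical contribution for the first-derivative estimate — is to add a null divergence term $\gamma\sum_\alpha(h_\alpha e^h|h_\alpha|^p)_{,\alpha}$, which integrates to zero but expands (equation \eqref{eq:auxadvection}) to produce a new negative term $-\gamma e^h|h_\alpha|^{p+2}$ of degree $p+2$; Young's inequality then dominates the degree-$(p+1)$ nuisance term by this new coercive piece plus the existing dissipation. In your proposal this degree-count problem is never acknowledged, so the Gr\"onwall argument as written would not close. A parallel device (writing the cubic term as a total derivative plus controllable remainders, see \eqref{eq:nonlinearaux1}) is needed for the Hessian quantity, and your intuition about choosing $\gamma$ so that $I_2+I_4$ absorbs the bad terms from $H_2$ is correct, but again it relies on the $I_4$ inequality having already gained the extra $|h_\alpha|^{p+2}$ coercivity via that auxiliary advection term. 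Your handling of third-order derivatives and of the bounded commutators from $\nabla A,\nabla^2 A$ is sound; the missing piece is the extra coercive term that controls the super-critical power produced by the gradient nonlinearity.
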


As a direct consequence of the above results, we have the following bounds.
\begin{theorem}\label{thm:maincontrol}
For any $p\ge 2$, there exists a constant $C=C(p, T, d)$ independent of $N$ such that for all $t\le T$, one has
\begin{gather}\label{eq:pfinsherboundN}
\int_{\R^{Nd}} \sum_{i=1}^N |\nabla_{v_i} \log \tilde{f}^{\b{\theta}}(t, v)|^p \tilde{f}^{\b{\theta}} dv\le C N,
\end{gather}
and that
\begin{gather}\label{eq:HessianboundN}
\int_{\R^{Nd}} \sum_{i=1}^N |\nabla_{v_i}^2 \log \tilde{f}^{\b{\theta}}(t, v)|^2 \tilde{f}^{\b{\theta}}(t, v)dv\le C N.
\end{gather}
\end{theorem}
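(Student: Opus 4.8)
The plan is to derive Theorem~\ref{thm:maincontrol} from the one-step stability estimates of Lemma~\ref{lmm:onestepstability} by a discrete Gr\"onwall / telescoping argument over the time grid $\{t_m\}$. Throughout, the batch sequence $\b{\theta}$ is fixed, so the partition of $\{1,\dots,N\}$ induced by $\theta^{(m)}$ at each $t_m$ is a deterministic perfect matching; the constants produced below will not depend on $\b{\theta}$ nor on $N$.

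First I would set $S_p(t):=\sum_{i=1}^N I_p^{i}(t)$. Because $\theta^{(m)}$ is a perfect matching, summing the first pairwise inequality of Lemma~\ref{lmm:onestepstability} over the $N/2$ pairs gives, for $t\in[t_m,t_{m+1}]$,
\[
S_p(t)\le e^{C(t-t_m)}S_p(t_m)+\tfrac{N}{2}C(t-t_m)\le e^{C\Delta t}S_p(t_m)+\tfrac{N}{2}C\Delta t .
\]
Iterating over $m=0,1,\dots$ up to $t_M\le T$ and summing the geometric series,
\[
S_p(t_M)\le e^{CT}S_p(0)+\tfrac{N}{2}\,C\Delta t\sum_{k=0}^{M-1}e^{Ck\Delta t}\le e^{CT}S_p(0)+\tfrac{N}{2}\bigl(e^{CT}-1\bigr),
\]
using $C\Delta t\le e^{C\Delta t}-1$. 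The crucial point is that the factor $\Delta t$ cancels the number of steps $M\sim T/\Delta t$, so the accumulated source term is $O(N)$ rather than $O(N/\Delta t)$. For a general $t\in[t_m,t_{m+1}]$ one reapplies the interval estimate once more. It remains to control the initial value: since the particles are i.i.d.\ at $t=0$, $S_p(0)=N\int_{\R^d}|\nabla\log f_0|^p f_0\,dv$, which is finite and $O(N)$ provided the initial data is regular enough that this integral converges; this follows from Assumption~\ref{ass:initial} together with the smoothness of $f_0$ (the same hypothesis used for the mean-field equation in Theorem~\ref{pro:gradlogdensity}), which makes $|\nabla\log f_0|$ grow at most polynomially while $f_0$ has all moments. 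This yields \eqref{eq:pfinsherboundN}.

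For \eqref{eq:HessianboundN} I would run the identical telescoping argument on $\tilde S(t):=\sum_{i=1}^N\tilde K^{i}(t)$, with $\tilde K^{i}=H_2^{i}+\gamma(I_2^{i}+I_4^{i})$ and the $\gamma>0$ furnished by Lemma~\ref{lmm:onestepstability}. Its second pairwise inequality sums to $\tilde S(t)\le e^{C\Delta t}\tilde S(t_m)+\tfrac{N}{2}C\Delta t$, and the same computation (with $\tilde S(0)=O(N)$ for the same reason) gives $\tilde S(t)\le CN$ for all $t\le T$. Since $\gamma>0$ and $I_2^{i},I_4^{i}\ge 0$, this forces $\sum_{i=1}^N H_2^{i}(t)\le\tilde S(t)\le CN$, which is exactly \eqref{eq:HessianboundN}.

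The genuinely hard work lies entirely inside Lemma~\ref{lmm:onestepstability}: proving the one-step differential inequalities for $I_p^{i}$ and for the combined quantity $\tilde K^{i}$ with constants independent of $N$ --- in particular handling the interplay between the Hessian term $H_2^{i}$ and the Fisher-type terms that dictates the specific combination $\tilde K^{i}=H_2^{i}+\gamma(I_2^{i}+I_4^{i})$, together with the auxiliary advection term announced at the start of this section. Granting that lemma, the reduction above is routine; the only points needing care are the $\Delta t$-cancellation that keeps the $\tfrac{N}{2}$-sized source terms from accumulating to order $N/\Delta t$, and the observation that the matching structure of $\theta^{(m)}$ makes the pairwise sums telescope exactly into sums over all $N$ particles.
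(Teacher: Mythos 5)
Your proposal is correct and follows essentially the paper's own argument: both sum the one-step stability bound from Lemma~\ref{lmm:onestepstability} over the $N/2$ pairs (equivalently, over all $i$), iterate over the time grid with the geometric-series cancellation you spell out, and use that the initial Fisher-type quantities are $O(N)$ since $\tilde f^{\b\theta}(0,\cdot)=f_0^{\otimes N}$. Your treatment of the Hessian bound via $\tilde S=\sum_i\tilde K^i$ and the nonnegativity of $\gamma(I_2^i+I_4^i)$ simply makes explicit what the paper dismisses as "similarly obtained."
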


\begin{proof}
This is in fact a direct consequence of Lemma \ref{lmm:onestepstability}.
Taking the sum over $i$, one finds that
\begin{multline*}
\sum_{i=1}^N\int_{\R^{Nd}} |\nabla_{v_i} \log \tilde{f}^{\b{\theta}}(t, v)|^p\tilde{f}^{\b{\theta}}(t, v)dv \\
\le e^{C(t-t_m)}\sum_{i=1}^N\int_{\R^{Nd}} |\nabla_{v_i} \log \tilde{f}^{\b{\theta}}(t_m, v)|^p\tilde{f}^{\b{\theta}}(t_m, v)dv
+CN(t-t_m),
\end{multline*}
where $C$ is independent of $N$. Iterating this gives
\begin{multline*}
\sum_{i=1}^N\int_{\R^{Nd}} |\nabla_{v_i} \log \tilde{f}^{\b{\theta}}(t, v)|^p\tilde{f}^{\b{\theta}}(t, v)dv \\
\le e^{Ct}\sum_{i=1}^N\int_{\R^{Nd}} |\nabla_{v_i} \log \tilde{f}^{\b{\theta}}(0, v)|^p\tilde{f}^{\b{\theta}}(0, v)dv
+CN\left(t-t_m+\sum_{j=1}^m e^{C(t-t_j)}\eta \right).
\end{multline*}
Since $\int |\nabla_{v_i} \log \tilde{f}^{\b{\theta}}(0, v)|^p\tilde{f}^{\b{\theta}}(0, v)dv
=\int_{\R^d} |\nabla_v f_0(v)|^p f_0(v)dv$ is independent of $N$, the claim then follows.

The claim for second order derivatives can be similarly obtained.
\end{proof}

As a direct consequence of the above results, one has
\begin{corollary}\label{cor:expectedparticledensity}
There exists a constant $C=C(T)$ independent of $N$ and $\theta$ such that
\begin{gather}
\sum_{i=1}^N \int_{\R^{Nd}} |\nabla_{v_i}\log f_m^{N,\theta}|^p f_m^{N,\theta}\le C(T)N
\end{gather}
and 
\begin{gather}\label{eq:integralsecondorder}
\sum_{i=1}^N \int_{\R^{Nd}} |\nabla_{v_i}^2\log f_m^{N,\theta}|^2 f_m^{N,\theta}\le C(T)N.
\end{gather}
Moreover, the the same estimates hold for $f^N$ as well.
\end{corollary}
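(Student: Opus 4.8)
The plan is to deduce the stated bounds for $f^N$ and $f_m^{N,\theta}$ from the uniform-in-$N$ (and in the batch sequence) estimates for the fixed-batch densities $\tilde f^{\b{\theta}}$ in Theorem~\ref{thm:maincontrol}, by a convexity argument. The first thing to record is that, by the Markov property and the linearity of the Fokker--Planck flow, both $f^N$ and $f_m^{N,\theta}$ are \emph{mixtures} of the $\tilde f^{\b{\theta}}$: for $t\in[t_m,t_{m+1})$ one has $f^N(t,\cdot)=\E_{\b{\theta}}[\tilde f^{\b{\theta}}(t,\cdot)]$ and $f_m^{N,\theta}(t,\cdot)=\E_{\theta^{(0)},\dots,\theta^{(m-1)}}\big[\tilde f^{(\theta^{(0)},\dots,\theta^{(m-1)},\theta)}(t,\cdot)\big]$, the latter depending on only finitely many batches.

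The second step exploits that the functionals $\mu\mapsto\int|\nabla_{v_i}\mu|^p/\mu^{p-1}=\int|\nabla_{v_i}\log\mu|^p\mu$ and $\mu\mapsto\int|\nabla_{v_i}^2\mu|^2/\mu$ are convex in $\mu$, since the integrands $(\rho,\xi)\mapsto|\xi|^p\rho^{1-p}$ and $(\rho,M)\mapsto|M|^2\rho^{-1}$ are perspective functions of the convex maps $\xi\mapsto|\xi|^p$ and $M\mapsto|M|^2$, hence jointly convex on $\{\rho>0\}$. Applying Jensen's inequality to the mixture representations of Step~1 (the exchange of the batch average with the spatial integral is legitimate by nonnegativity of the integrands and the uniform bounds below) yields, for $\mu\in\{f^N(t,\cdot),\,f_m^{N,\theta}(t,\cdot)\}$,
\[
\sum_{i=1}^N\int \frac{|\nabla_{v_i}\mu|^p}{\mu^{p-1}}\ \le\ \E\!\left[\sum_{i=1}^N\int \frac{|\nabla_{v_i}\tilde f^{\b{\theta}}|^p}{(\tilde f^{\b{\theta}})^{p-1}}\right],\qquad \sum_{i=1}^N\int \frac{|\nabla_{v_i}^2\mu|^2}{\mu}\ \le\ \E\!\left[\sum_{i=1}^N\int \frac{|\nabla_{v_i}^2\tilde f^{\b{\theta}}|^2}{\tilde f^{\b{\theta}}}\right].
\]
Using the pointwise identity $\nabla_{v_i}^2\log g=g^{-1}\nabla_{v_i}^2 g-\nabla_{v_i}\log g\otimes\nabla_{v_i}\log g$ to pass between $\int|\nabla_{v_i}^2 g|^2/g$ and $\int g|\nabla_{v_i}^2\log g|^2$ up to a term $\int g|\nabla_{v_i}\log g|^4$, Theorem~\ref{thm:maincontrol} (with the exponents $p$ and $4$) bounds each right-hand side by $C(T)N$, uniformly in $N$ and in the batch sequence; hence the left-hand sides are $\le C(T)N$ too.

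This already gives the first assertion and the estimate $\sum_i\int|\nabla_{v_i}^2\mu|^2/\mu\le C(T)N$. For the logarithmic Hessian I would then apply the same identity once more in the form $|\nabla_{v_i}^2\log\mu|^2\le 2|\nabla_{v_i}^2\mu|^2/\mu^2+2|\nabla_{v_i}\log\mu|^4$, integrate against $\mu$, and invoke the two bounds just obtained (the $|\nabla_{v_i}\log\mu|^4$ term being the $p=4$ case of the first one). The point worth emphasizing is that this detour is \emph{necessary}: the functional $\mu\mapsto\int\mu|\nabla^2\log\mu|^2$ is not convex --- already in one dimension a short computation shows the Hessian of its integrand $\frac{1}{\rho}\big(h-\frac{p^2}{\rho}\big)^2$ (in the variables $\rho=\mu,\ p=\mu',\ h=\mu''$) becomes indefinite once $h>p^2/\rho$ --- so Jensen cannot be applied to it directly, and one must route the Hessian estimate through the convex ``non-logarithmic'' Fisher-type functionals $\int|\nabla^2\mu|^2/\mu$ and $\int|\nabla\mu|^4/\mu^3$ as above. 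Everything else is routine bookkeeping; the only mild care needed is the measure-theoretic justification of interchanging $\E_{\b{\theta}}$ with the spatial integrals, which follows from the uniform bounds and Fatou's lemma.
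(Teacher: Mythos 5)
Your proof is correct and takes essentially the same route as the paper: expressing $f_m^{N,\theta}$ and $f^N$ as mixtures of the fixed-batch densities $\tilde f^{\b{\theta}}$, applying Jensen's inequality to the jointly convex (perspective) integrands $(\rho,\xi)\mapsto|\xi|^p\rho^{1-p}$ and $(\rho,M)\mapsto|M|^2\rho^{-1}$, and routing the logarithmic-Hessian bound through the identity $\nabla^2\log g=g^{-1}\nabla^2 g-\nabla\log g\otimes\nabla\log g$ in both directions with the aid of the $p=4$ Fisher bound. The only addition beyond what the paper records is your (correct and pleasant) remark that $\mu\mapsto\int\mu|\nabla^2\log\mu|^2$ is itself not convex, explaining why the detour is unavoidable.
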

\begin{proof}
The proofs for $f_m^{N,\theta}$ and $f^N$ are the same. We take $f_m^{N,\theta}$ as the example. First note that
\[
\int_{\R^{Nd}} |\nabla_{v_i}\log f_m^{N,\theta}|^p f_m^{N,\theta}
=\int_{\R^{Nd}} \frac{|\nabla_{v_i} f_m^{N,\theta}|^p }{|f_m^{N,\theta}|^{p-1}}.
\]
The mapping $(f, m)\mapsto \frac{|m|^p}{f^{p-1}}$ is convex for $p\ge 1$.
Then, as a consequence of $f_m^{N,\theta}=\mathbb{E}[\tilde{f}^{\b{\theta}}|\theta_m]$ and the Jensen's inequality, the first inequality holds by \eqref{eq:pfinsherboundN}.

For the second inequality, similar argument yields that
\[
\begin{split}
\sum_{i=1}^N \int_{\R^{Nd}} |\nabla_{v_i}^2\log f_m^{N,\theta}|^2 f_m^{N,\theta}
&\le \sum_{i=1}^N \int_{\R^{Nd}} \frac{|\nabla_{v_i}^2 f_m^{N,\theta}|^p }{|f_m^{N,\theta}|^{p-1}}
+\sum_{i=1}^N \int_{\R^{Nd}} \frac{|\nabla_{v_i} f_m^{N,\theta}|^{2p} }{|f_m^{N,\theta}|^{2p-1}}\\
& \le \sum_{i=1}^N \E\left( \int_{\R^{Nd}} \frac{|\nabla_{v_i}^2 f^{\b{\theta}}|^p }{|f^{\b{\theta}}|^{p-1}} | \theta_m \right)+CN.
\end{split}
\]
By \eqref{eq:HessianboundN} and \eqref{eq:pfinsherboundN},
the first term is also controlled by $CN$.
\end{proof}

Another consequence of Theorem \ref{thm:maincontrol} and the an intermediate step in the proof, one also has the following estimates.
\begin{corollary}\label{cor:diss_est}
There exists a constant $C=C(T)$ independent of $N$ and $\theta$ such that
\begin{gather}
\int_0^T\int_{\R^{Nd}}\sum_{i=1}^N
|\nabla_{v_i}\log \tilde{f}^{\b{\theta}}(t, v)|^{p-2}\left(\sum_{j=1}^N|\nabla_{v_i}\nabla_{v_j} \log \tilde{f}^{\b{\theta}}(t, v)|^2\right) 
\tilde{f}^{\b{\theta}}(t, v)\le CN,
\end{gather}
and that
\begin{gather}
%\sum_{\ell=1}^2 \sum_{j=1}^N\int e^h |\nabla_{v_j}h_{k_{\ell}k_{\ell}}|^2
\int_0^T\int_{\R^{Nd}}\sum_{i=1}^N\left(\sum_{j=1}^N|\nabla_{v_i}\nabla_{v_j}^2 \log \tilde{f}^{\b{\theta}}(t, v)|^2\right) 
\tilde{f}^{\b{\theta}}(t, v)\le CN.
\end{gather}

\end{corollary}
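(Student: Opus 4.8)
The plan is to read off the two quantities as the time-integrated Fisher/Hessian dissipation that is produced — and then discarded — in the proof of Lemma~\ref{lmm:onestepstability}, and to bound it by combining that differential inequality with the uniform-in-time controls of Theorem~\ref{thm:maincontrol}. Write $u=\log\tilde f^{\b{\theta}}$. On each interval $[t_m,t_{m+1}]$ the density $\tilde f^{\b{\theta}}$ solves the linear equation \eqref{eq:jointrbm}, whose generator $\mathcal L=\sum_k\bigl[K(v_k-v_{\theta^{(m)}(k)})\cdot\nabla_{v_k}+\tfrac12 A(v_k-v_{\theta^{(m)}(k)}):\nabla_{v_k}^2\bigr]$ has a purely diagonal second-order part. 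First I would differentiate the equation satisfied by $u$ in $v_i$ and carry out the usual Bochner-type computation for $\tfrac{d}{dt}I_p^i$; for $p\ge2$ every second-order block of $\mathcal L$ then contributes a manifestly nonnegative term, and retaining all of them while using $A\succeq\kappa I_d$ (Assumption~\ref{ass:KA1}) yields, on $[t_m,t_{m+1}]$, an inequality of the form
\begin{gather*}
\frac{d}{dt}I_p^i(t)\le -c_0\int_{\R^{Nd}}|\nabla_{v_i}u|^{p-2}\Bigl(\sum_{j=1}^N|\nabla_{v_i}\nabla_{v_j}u|^2\Bigr)\tilde f^{\b{\theta}}\,dv+C\bigl(1+I_p^i(t)+I_p^{\theta^{(m)}(i)}(t)\bigr),
\end{gather*}
with $C$ independent of $N$; the error terms coming from the derivatives of $K$ and $A$ are absorbed into the bracket via boundedness of the kernels and Young's inequality, exactly as in Lemma~\ref{lmm:onestepstability}, and discarding the first (good) term is precisely what that lemma does.

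Next I would sum over $i$ — using that $\theta^{(m)}$ is an involution, so $\sum_i I_p^{\theta^{(m)}(i)}=\sum_i I_p^i$ — integrate over $[t_m,t_{m+1}]$, exploit the continuity of $\tilde f^{\b{\theta}}$ at the grid points to telescope over $m$, and obtain
\begin{gather*}
c_0\int_0^T\!\!\int_{\R^{Nd}}\sum_{i=1}^N|\nabla_{v_i}u|^{p-2}\Bigl(\sum_{j=1}^N|\nabla_{v_i}\nabla_{v_j}u|^2\Bigr)\tilde f^{\b{\theta}}\,dv\,dt\le \sum_{i=1}^N I_p^i(0)+C\!\int_0^T\!\Bigl(N+\sum_{i=1}^N I_p^i(t)\Bigr)dt.
\end{gather*}
Here $\sum_i I_p^i(0)=N\int_{\R^d}|\nabla\log f_0|^p f_0\,dv=O(N)$ by the Gaussian bounds in Assumption~\ref{ass:initial}, while $\sum_i I_p^i(t)\le CN$ for all $t\le T$ by \eqref{eq:pfinsherboundN}; hence the right-hand side is $O(N)$, which is the first assertion.

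The second assertion is obtained in the same way starting from the differential inequality for $\tilde K^i=H_2^i+\gamma(I_2^i+I_4^i)$ behind the Hessian part of Lemma~\ref{lmm:onestepstability}: the Bochner computation for $\tfrac{d}{dt}\int|\nabla_{v_i}^2u|^2\tilde f^{\b{\theta}}$, again keeping every second-order block of $\mathcal L$ and using $A\succeq\kappa I_d$, produces the dissipation $c_0\int\sum_j|\nabla_{v_j}\nabla_{v_i}^2u|^2\tilde f^{\b{\theta}}$, and by relabelling $i\leftrightarrow j$ together with symmetry of mixed partials $\sum_{i,j}|\nabla_{v_j}\nabla_{v_i}^2u|^2=\sum_{i,j}|\nabla_{v_i}\nabla_{v_j}^2u|^2$, which is exactly the quantity in the statement; summing over $i$, integrating in time, and invoking \eqref{eq:HessianboundN} for the $H_2^i$ part (and \eqref{eq:pfinsherboundN} for the $\gamma(I_2^i+I_4^i)$ part) finishes the proof. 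I expect the only delicate point to be making sure the Bochner computation is run so as to retain the \emph{full} $\sum_{j=1}^N$ of cross derivatives in the dissipation — which is legitimate since for $p\ge2$ all those blocks are nonnegative — rather than a truncated version, and to keep the absorbed constants independent of $N$; both are already handled inside the proof of Lemma~\ref{lmm:onestepstability}, so the corollary adds nothing new beyond the observation that the dissipation sacrificed to close the Grönwall argument is itself integrable in time with the same $O(N)$ budget.
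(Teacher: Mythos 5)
Your proposal is correct and follows essentially the same route as the paper: the two quantities are precisely the time-integrated dissipation terms retained in \eqref{eq:inter1} and \eqref{eq:inter3} during the proof of Lemma~\ref{lmm:onestepstability}, and the corollary follows by summing those inequalities over all particles, integrating over $[0,T]$, and invoking the uniform-in-time bounds of Theorem~\ref{thm:maincontrol} together with the $O(N)$ initial Fisher information. One cosmetic remark: for the second assertion, passing from $\sum_{i,j}|\nabla_{v_j}\nabla_{v_i}^2 u|^2$ (the dissipation produced) to $\sum_{i,j}|\nabla_{v_i}\nabla_{v_j}^2 u|^2$ (the quantity in the statement) is a pure double-sum relabelling of dummy indices and does not actually require symmetry of mixed partials; your conclusion is right, but the invocation of mixed-partial symmetry is superfluous.
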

The proof of this corollary is deferred to the next Subsection~\ref{subsec:proofparticledensity}.

\subsection{The proofs for the estimates of the particle density}\label{subsec:proofparticledensity}

In this section, we give the proofs to the results in Subsection~\ref{sec:resultsdensity}.  We first recall \eqref{eq:jointrbm} and introduce some notation to simplify the presentation. 
We consider $t\in (t_m, t_{m+1}]$. During this interval, the $N$ particles are divided into $N/2$ groups, and
$i$ and $\theta^{(m)}(i)$ interact with each other without affecting other particles. Hence, we can order the particles
to be $\{\sigma(1), \sigma(2), \cdots, \sigma(N)\}$ such that $\sigma(2k-1)$ and $\sigma(2k)$ are in the same group.
Introduce 
\begin{gather}
x_k:=[v_{\sigma(2k-1)}, v_{\sigma(2k)}]^T,
\end{gather}
and
\begin{equation}
\begin{split}
& \tilde{a}(x_k):=
\frac{1}{2}\begin{bmatrix}
A(v_{\sigma(2k-1)}, v_{\sigma(2k)}) & \\
& A(v_{\sigma(2k)}, v_{\sigma(2k-1)})
\end{bmatrix}
\in \R^{2d\times 2d},  \\
&\tilde{b}(x_k):=
-\begin{bmatrix}
K(v_{\sigma(2k-1)}, v_{\sigma(2k)})\\
K(v_{\sigma(2k)}, v_{\sigma(2k-1)})
\end{bmatrix}
\in \R^{2d}.
\end{split}
\end{equation}
Then, \eqref{eq:jointrbm} is rewritten as
\begin{gather}
\partial_t \tilde{f}^{\b{\theta}}=\sum_{k=1}^{N/2}\nabla_{x_k}\cdot(\tilde{b}(x_k)\tilde{f}^{\b{\theta}})
+\sum_{k=1}^{N/2}\nabla_{x_k}^2:(\tilde{a}(x_k)\tilde{f}^{\b{\theta}}).
\end{gather}
Denote
\begin{gather}
h=\log \tilde{f}^{\b{\theta}}.
\end{gather}
Then, it holds that
\begin{multline}
\partial_t h=\sum_{k=1}^{N/2}\Big[\tilde{a}(x_k):\nabla_{x_k}^2h+\tilde{a}(x_k):\nabla_{x_k} h\otimes \nabla_{x_k} h\\
+(\tilde{b}(x_k)+2\nabla_{x_k}\cdot \tilde{a}(x_k))\cdot \nabla_{x_k}h+\nabla_{x_k}\cdot \tilde{b}(x_k)
+\nabla_{x_k}^2: \tilde{a}(x_k)\Big].
\end{multline}

We set $x^k:=(x_1^k, x_2^k)$ so that $x_1^k=v_{\sigma(2k-1)}$ and $x_2^k=v_{\sigma(2k)}$.
Denote
$ \tilde{a}^k:=\tilde{a}(x_k)$ and $\tilde{b}^k:=\tilde{b}(x_k)$.
Moreover, for a function $g$ and a vector field $w$, denote
\[
 g_k=\begin{pmatrix}\nabla_{v_{\sigma(2k-1)}}g\\
  \nabla_{v_{\sigma(2k)}}g
  \end{pmatrix}
  :=\nabla_{x_k}g, \quad w_{,k}:=\nabla_{x_k}\cdot w.
\]
Then, $\tilde{a}_{,kk}^k=\nabla_{x_k}^2: \tilde{a}(x_k)$.
We also introduce
\begin{gather}
a^k=\tilde{a}^k,\quad b^k=\tilde{b}^k+\tilde{a}^k_{,k},
\quad c^k=\tilde{b}^k_{,k}+\tilde{a}^k_{,kk}.
\end{gather}
Then, 
\begin{gather}
h_t=\sum_{i=1}^{N/2} (a^i:h_{ii}+a^i: h_i\otimes h_i+b^i\cdot h_i+c^i)
\end{gather}
In the proof below, we will also consider the derivatives on $v_{\alpha}$ for $1\le \alpha\le N$.
For the convenience, we will use Greek subindices $\alpha, \beta$ to represent such derivatives. For example, $h_{\alpha}$ means $\nabla_{v_{\alpha}}h$ and $h_{\alpha,\alpha}$ means $\nabla_{v_{\alpha}}\cdot h_{\alpha}=\Delta_{\alpha}h$.

\begin{proof}[Proof of Lemma \ref{lmm:onestepstability}]

(i) Consider the first order derivatives. 
Let $\mathcal{I}_{k}=\{\alpha, \tilde{\alpha}\}$ be the set of two indices $\sigma(2k-1)$ and $\sigma(2k)$ associated with $x_k$.
Define
\begin{gather}
u^k:=e^h \sum_{\alpha\in \cI_k}| h_{\alpha}|^p.
\end{gather}

It is straightforward to compute that (see Appendix \ref{app:missingdetails} for the details)
\begin{multline}\label{eq:eqnforu1}
u^k_t-\sum_{i=1}^{N/2} (\tilde{b}^i u^k)_i-\sum_i(\tilde{a}^i u^k)_{,ii}
=pe^h \sum_{\alpha\in \cI_k}|h_{\alpha}|^{p-2}h_{\alpha}\cdot \Big(a_{\alpha}^k:h_{kk}
+a_{\alpha}^k: h_k\otimes h_k+b_{\alpha}^k\cdot h_k
+c_{\alpha}\Big)\\
-pe^h\sum_{\alpha\in \cI_k} |h_{\alpha}|^{p-2}\sum_{i=1}^{N/2} a^i:_i [(I+(p-2)\frac{h_{\alpha}\otimes h_{\alpha}}{|h_{\alpha}|^2}):_{\alpha} h_{\alpha i}\otimes h_{\alpha i}].
\end{multline}

The last term is the good term which gives the dissipation.  In fact, using the fact that $a^i\succeq \kappa I_{2d}$, one has
\begin{gather*}
\begin{split}
a^i:_i [(I+(p-2)\frac{h_{\alpha}\otimes h_{\alpha}}{|h_{\alpha}|^2}):_{\alpha} h_{\alpha i}\otimes h_{\alpha i}]
&\ge \sum_{\lambda} \lambda a^i:[e_{\lambda}\cdot\nabla_{\alpha}h_i\otimes (e_{\lambda}\cdot\nabla_{\alpha}h_i)]\\
&\ge \sum_{\beta\in \cI_i}\sum_{\lambda} (p-1) \kappa  |e_{\lambda}\cdot h_{\alpha\beta}|^2\\
&=(p-1)\kappa \sum_{\beta\in \cI_i} |h_{\alpha \beta}|^2,
\end{split}
\end{gather*}
where $(\lambda, e_{\lambda})$ are the eigenpairs of the matrix 
$I+(p-2)\frac{h_{\alpha}\otimes h_{\alpha}}{|h_{\alpha}|^2}$ and $|M|=\sqrt{\sum_{i,j} M_{ij}^2}$ is the Frobenius norm of $M$.

However,  the nonlinear term $a^k_{\alpha}: h_k\otimes h_k$ is the trouble term because the power is more than we can control. 
Our observation here is that
\begin{gather}\label{eq:auxadvection}
(h_{\alpha} e^h |h_{\alpha}|^p)_{,\alpha}=e^h 
|h_{\alpha}|^{p+2}+e^h|h_{\alpha}|^p h_{\alpha, \alpha}+p e^h
|h_{\alpha}|^{p-2} h_{\alpha}\otimes h_{\alpha} : h_{\alpha \alpha}.
\end{gather}
Then, we add this transport term with a coefficient $\gamma$ small enough into the inequality. Then, using Young's inequality, one has
\begin{gather}\label{eq:keyeqn1}
\begin{split}
& u^k_t-\gamma \sum_{\alpha\in \cI_k}(h_{\alpha} e^h |h_{\alpha}|^p)_{,\alpha}-\sum_i (\tilde{b}^i u^k)_i-\sum_i(\tilde{a}^i u^k)_{,ii}\\
& \le -\tilde{\kappa} e^h\sum_{\alpha\in \cI_k} |h_{\alpha}|^{p-2}\sum_{\beta=1}^{N}  |h_{\alpha\beta}|^2
-\tilde{\kappa}e^h \sum_{\alpha\in \cI_k} |h_{\alpha}|^{p+2}
+Ce^h\sum_{\alpha\in \cI_k} (|h_{\alpha}|^{p}+1),
\end{split}
\end{gather}
where $C$ is independent of $N$.
The details for \eqref{eq:eqnforu1} and \eqref{eq:keyeqn1} can  be found in Appendix~\ref{app:missingdetails}.

Taking the integral, one then finds that
\begin{gather}\label{eq:inter1}
\frac{d}{dt}\int e^h \sum_{\alpha\in \cI_k} |h_{\alpha}|^p+\tilde{\kappa} \int e^h\sum_{\alpha\in \cI_k} |h_{\alpha}|^{p-2}\sum_{\beta=1}^{N}  |h_{\alpha\beta}|^2
\le C\left(\int e^h\sum_{\alpha\in \cI_k} |h_{\alpha}|^p+1 \right).
\end{gather}
 Integrating over this inequality gives the first result.

(ii) We move to the second order derivatives. Define
\begin{gather}
u^k:=e^h \sum_{\alpha\in \cI_k} |h_{\alpha\alpha}|^{2}.
\end{gather}
By similar computation to \eqref{eq:keyeqn1}, one has
\begin{multline}\label{eq:hessianderivative}
u_t^k-\sum_{i=1}^{N/2}((\tilde{a}^i u^k )_{,ii}+(\tilde{b}^i u^k )_{,i})
=u_t^k-\sum_{i=1}^{N/2}(a^i:u^k_{ii}+b^i\cdot u^k_i+c^iu^k )\\
=2 e^h\sum_{\alpha\in \cI_k} 
\Big[ h_{\alpha\alpha}\cdot (\sum_i 2a^i: h_{i\alpha }\otimes h_{i\alpha})
-\sum_ia^i: h_{\alpha\alpha i}\otimes_i h_{\alpha\alpha i}+R_{k,\alpha} \Big],
\end{multline}
where
\begin{multline*}
R_{k,\alpha}
=h_{\alpha\alpha}
\cdot(a^k_{\alpha\alpha}:h_{kk}
+2a^k_{\alpha}:h_{kk \alpha}
+a^k_{\alpha\alpha}: h_k\otimes h_k\\
+4a^k_{\alpha}: h_k\otimes h_{k \alpha}
+b^k_{\alpha\alpha}\cdot h_k+2b^k_{\alpha}\cdot h_{k \alpha}
+c^k_{\alpha\alpha}).
\end{multline*}
Noting that $a^k$ is block-diagonal so that one has estimates like  and by Young's inequality, one finds that
\[
\sum_{\alpha\in \cI_k} R_{k,\alpha}
\le \sum_{\alpha, \beta \in \cI_k}(\delta |h_{\alpha \alpha \beta}|^2
+C( |h_{\alpha \beta}|^2
+ |h_{\alpha}|^2|h_{\alpha \beta}|^2+|h_\beta|^4+1)).
\]
Consider the first term on the right-hand side in \eqref{eq:hessianderivative}
\begin{multline}\label{eq:nonlinearaux1}
2e^h h_{\alpha\alpha }: (\sum_{i=1}^{N/2} 2a^i: h_{i \alpha}\otimes h_{i \alpha})
=4\{e^h h_{k_{\ell}}\cdot (\sum_{i=1}^{N/2} a^i: h_{i \alpha}\otimes h_{i \alpha})\}_{\alpha}\\
-4e^h h_{\alpha}\otimes h_{\alpha}:(\sum_{i=1}^{N/2} a^i: h_{i \alpha}\otimes h_{i \alpha})\\
-8 e^hh_{\alpha}
\cdot \sum_{i=1}^{N/2}a^i: h_{i \alpha}\otimes h_{i \alpha\alpha }
-4e^hh_{\alpha}\otimes a^k_{\alpha}:(h_{k \alpha}\otimes h_{k \alpha}).
\end{multline}
The first term will integrate to zero while the second term has a definite sign. 
For the third term, noting that $a^i$ is block-diagonal, one has
\[
-8 e^hh_{\alpha}
\cdot \sum_{i=1}^{N/2}a^i: h_{i \alpha}\otimes h_{i \alpha\alpha}
\le e^h\delta \sum_{i=1}^{N/2} a^i: h_{i \alpha\alpha} \otimes_i h_{i \alpha\alpha}
+Ce^h \sum_{j=1}^{N}|h_{\alpha}|^2 |\nabla_{v_j}h_{\alpha}|^2.
\]
The last term on the right-hand side of \eqref{eq:nonlinearaux1} is controlled by $\sum_{\alpha,\beta\in \cI_k}C( |h_{\alpha \beta}|^2
+ |h_{\alpha}|^2|h_{\alpha\beta}|^2)$.

We take the integral of \eqref{eq:hessianderivative}. Note that $a^i$ is block-diagonal so that 
 $\sum_{i=1}^{N/2}a^i: h_{\alpha\alpha i}\otimes h_{\alpha\alpha i} \ge \kappa \sum_{\alpha\in \cI_k} \sum_{j=1}^N |\nabla_{v_j}h_{\alpha\alpha}|^2$. Taking $\delta$ small enough, one finds that
\begin{multline}\label{eq:inter2}
\frac{d}{dt}\int e^h \sum_{\alpha\in \cI_k} |h_{\alpha\alpha}|^{2}
\le -(\kappa-2\delta)  \sum_{\alpha\in \cI_k} \sum_{j=1}^N\int e^h |\nabla_{v_j}h_{\alpha\alpha}|^2+ \\
C\int e^h\sum_{\alpha\in \cI_k} \sum_{j=1}^{N} |h_{\alpha}|^2 |\nabla_{v_j}h_{\alpha}|^2
+C(\int e^h \sum_{\alpha,\beta\in \cI_k} |h_{\alpha \beta}|^2+1)+\int e^h\sum_{\alpha\in \cI_k}|h_{\alpha}|^4
\end{multline}

Combining \eqref{eq:inter1} and \eqref{eq:inter2}, one finds for some $\gamma>0$ that
\begin{multline}\label{eq:inter3}
\frac{d}{dt}\left(\int e^h \sum_{\alpha\in \cI_k} |h_{\alpha\alpha}|^{2}+\gamma \int e^h(|h_k|^2+|h_k|^4)\right)\\
+\tilde{\kappa} \sum_{\alpha\in \cI_k} \sum_{j=1}^N\int e^h |\nabla_{v_j}h_{\alpha\alpha}|^2 \le C\left(\gamma \int e^h \sum_{\alpha\in \cI_k} |(h_{\alpha}|^2+|h_\alpha |^4)+1\right).
\end{multline}

Applying Gr\"onwall's inequality to \eqref{eq:inter1} and \eqref{eq:inter3} from $t_m$ to $t\in (t_m, t_{m+1}]$ gives the desired results.
\end{proof}

Next, we give the proof to Corollary \ref{cor:diss_est}.
\begin{proof}[Proof of Corollary \ref{cor:diss_est}]

For the first assertion, we use \eqref{eq:inter1}, which is equivalent to
\begin{multline*}
\frac{d}{dt}\int \tilde{f}^{\b{\theta}}\left(|\nabla_{v_i}\log \tilde{f}^{\b{\theta}}|^p
+|\nabla_{v_{\theta(i)}}\log \tilde{f}^{\b{\theta}}|^p\right)
+\tilde{\kappa}\int \tilde{f}^{\b{\theta}} \Big(|\nabla_{v_i}\log \tilde{f}^{\b{\theta}}|^{p-2}
\sum_{j=1}^N|\nabla_{v_j}\nabla_{v_i} \log \tilde{f}^{\b{\theta}}|^2\\
+|\nabla_{v_{\theta(i)}}\log \tilde{f}^{\b{\theta}}|^{p-2}\sum_{j=1}^N|\nabla_{v_j}\nabla_{v_{\theta(i)}} \log \tilde{f}^{\b{\theta}}|^2\Big)
\le C\left(\int \tilde{f}^{\b{\theta}}(|\nabla_{v_i}\log \tilde{f}^{\b{\theta}}|^p
+|\nabla_{v_{\theta(i)}}\log \tilde{f}^{\b{\theta}}|^p)+1\right).
\end{multline*}
Taking the sum of over $ 1\le i\le N/2$, one has
\begin{multline*}
\frac{d}{dt}\int_{\R^{Nd}} \tilde{f}^{\b{\theta}}\sum_{j=1}^N |\nabla_{v_j}\log \tilde{f}^{\b{\theta}}|^p
+\tilde{\kappa}\int_{\R^{Nd}} \tilde{f}^{\b{\theta}} \sum_{j=1}^N |\nabla_{v_j}\log \tilde{f}^{\b{\theta}}|^{p-2}
\sum_{j=1}^N|\nabla_{v_j}\nabla_{v_i} \log \tilde{f}^{\b{\theta}}|^2\\
\le C\left(\int_{\R^{Nd}} \tilde{f}^{\b{\theta}}\sum_{j=1}^N |\nabla_{v_j}\log \tilde{f}^{\b{\theta}}|^p+N\right).
\end{multline*}
This inequality holds for all $t\in (t_m, t_{m+1})$ and all $m$ so that one can take the integral from $0$
to $T$ to obtain the first desired result, using the control in Theorem \ref{thm:maincontrol}.

The second desired result can be obtained by \eqref{eq:inter3} and similar argument.
\end{proof}

\subsection{Estimates for the local error terms}\label{subsec:localerr}

In our problem, the diffusion coefficients for $f_m^{N,\theta}$ and $f_m^{N, \tilde{\theta}_i}$
are different so the usual $f$-divergence and the relative Fisher information would come across difficulty.
Instead, we adopt the symmetrized version for the local error.

\begin{proposition}\label{pro:localerrors}
For any $t_m \le t\le \min(t_{m+1}, T)$,  the following local estimates hold
\begin{subequations}
\begin{align}
&\E \sum_{i=1}^N \int_{\R^{Nd}}\frac{|f_m^{N,\theta}-f_m^{N, \tilde{\theta}_i}|^2}{f_m^{N,\theta}+f_m^{N, \tilde{\theta}_i}}
\le C N (t-t_m)^2,\\
&\E \sum_{i=1}^N \int_{\R^{Nd}}\frac{|\nabla_{v_i}f_m^{N,\theta}-\nabla_{v_i}f_m^{N, \tilde{\theta}_i}|^2}{f_m^{N,\theta}+f_m^{N, \tilde{\theta}_i}}
\le C N (t-t_m),
\end{align}
\end{subequations}
where $C$ is independent of $N$.
\end{proposition}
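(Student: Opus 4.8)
The plan is to track the evolution of $g := f_m^{N,\theta} - f_m^{N,\tilde\theta_i}$ on the interval $(t_m, t_{m+1})$ and to control the symmetrized quadratic functionals $\Phi(t) := \int |g|^2/(f_m^{N,\theta}+f_m^{N,\tilde\theta_i})$ and $\Psi_i(t) := \int |\nabla_{v_i}g|^2/(f_m^{N,\theta}+f_m^{N,\tilde\theta_i})$ by a Grönwall argument seeded by the initial value $g(t_m,\cdot)=0$. First I would note that, conditioned on $\theta$, the two densities $f_m^{N,\theta}$ and $f_m^{N,\tilde\theta_i}$ solve the linear Fokker--Planck equation \eqref{eq:jointrbm} with the \emph{same} initial datum $f^N(t_m,\cdot)$ but with coefficients differing only in the batch assignment; by the construction of $\tilde\theta_i$ in Lemma \ref{lmm:coupledbatch}, the coefficients agree except on the at most four coordinates $i,\theta(i),\tilde\theta_i(i),\theta(\tilde\theta_i(i))$, and they differ only when $\theta(i)\neq\tilde\theta_i(i)$, an event of probability $(N-2)/(N-1)$ over the coupling. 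Thus $g$ itself satisfies a linear Fokker--Planck-type equation with a source term supported on $O(1)$ coordinates, of the schematic form $\nabla\cdot(\text{bounded})(f_m^{N,\theta}-f_m^{N,\tilde\theta_i})$ plus an inhomogeneity $\nabla_{v_\ell}\cdot\big((K_\theta-K_{\tilde\theta})f_m^{N,\tilde\theta_i}\big) + \tfrac12\nabla_{v_\ell}^2:\big((A_\theta-A_{\tilde\theta})f_m^{N,\tilde\theta_i}\big)$ for $\ell$ among those $O(1)$ coordinates.

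Next I would differentiate $\Phi(t)$ in time. The ``diagonal'' part of the generator produces a nonpositive dissipation term $-\,c\int |\nabla g|^2/(f_m^{N,\theta}+f_m^{N,\tilde\theta_i})$ (using the uniform positive definiteness of $A$, Assumption \ref{ass:KA1}) plus lower-order terms bounded by $C\Phi(t)$, exactly as in the classical computation of the time derivative of an $f$-divergence. The inhomogeneous source, after integration by parts and Cauchy--Schwarz against the dissipation, contributes at most $\tfrac{c}{2}\int |\nabla g|^2/(\cdots) + C\int |\nabla_{v_\ell}f_m^{N,\tilde\theta_i}|^2/f_m^{N,\tilde\theta_i} + C\int |\nabla_{v_\ell}^2 f_m^{N,\tilde\theta_i}|^2/f_m^{N,\tilde\theta_i}$, where the last two integrals are controlled coordinate-wise by Corollary \ref{cor:expectedparticledensity} (which gives $O(1)$ per coordinate). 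Since the source is active only on $O(1)$ coordinates and only with coupling-probability $O(1/N)$ for a \emph{fixed} pair but summed over $i$ gives the right count, taking expectation over the coupling $\tilde\theta_i$ and then summing over $i=1,\dots,N$ yields $\frac{d}{dt}\,\E\sum_i \Phi_i \le C\,\E\sum_i \Phi_i + CN$; integrating from $t_m$ with $\Phi_i(t_m)=0$ gives the bound $\E\sum_i\Phi_i(t) \le CN(t-t_m)^2$ — the quadratic power coming precisely from one factor $(t-t_m)$ out of integrating the constant source and a second factor from the fact that $\Phi$ itself vanishes at $t_m$ so the $CN$ term is integrated twice. For $\Psi_i$, I would differentiate the equation once in $v_i$ and repeat the same scheme; now the source involves second derivatives of $f_m^{N,\tilde\theta_i}$ (hence \eqref{eq:integralsecondorder} of Corollary \ref{cor:expectedparticledensity}), and since $\Psi_i$ no longer vanishes as rapidly — the differentiated source is $O(1)$ immediately — one only gains a single power, $\E\sum_i\Psi_i(t)\le CN(t-t_m)$.

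The main obstacle I anticipate is making the differentiation of $\Phi$ and $\Psi_i$ rigorous and uniform in $N$: the functionals involve ratios that could be singular where the densities are small, so one needs the lower Gaussian bounds on $f^N(t_m,\cdot)$ (inherited from Assumption \ref{ass:initial} and Lemma \ref{lmm:taildensity}-type arguments applied to the linear Fokker--Planck flow on $(t_m,t_{m+1})$) together with the integral derivative bounds of Theorem \ref{thm:maincontrol} and Corollary \ref{cor:expectedparticledensity} to justify all integrations by parts and to ensure the boundary terms at spatial infinity vanish. A secondary subtlety is bookkeeping the coupling: one must verify that $\E_{\tilde\theta_i}\|K_\theta - K_{\tilde\theta_i}\|^2$ and $\E_{\tilde\theta_i}\|A_\theta - A_{\tilde\theta_i}\|^2$ are $O(1/N)$ per coordinate (since they vanish with probability $1/(N-1)$ and are bounded by the $L^\infty$-bounds on $K,A$), so that the sum over $i$ of the source contributions is genuinely $O(N)$ and not $O(N^2)$; this is where the batch size $p=2$ and the specific construction of $\tilde\theta_i$ in Lemma \ref{lmm:coupledbatch} enter decisively.
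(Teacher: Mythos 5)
Your overall scaffolding matches the paper: track $E_1(t)=\E\sum_i\int|\delta f|^2/(f+\tilde f)$ and $E_2(t)=\E\sum_i\int|\nabla_{v_i}\delta f|^2/(f+\tilde f)$ with $\delta f = f_m^{N,\theta}-f_m^{N,\tilde\theta_i}$, differentiate in time, use positive definiteness of $A$ to discard the dissipation, observe that the source supported on $O(1)$ coordinates per $i$ plus Corollary~\ref{cor:expectedparticledensity} gives an $O(N)$-size right-hand side, and Grönwall from the zero initial value. That part is sound.

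However, there is a genuine gap in how you obtain the quadratic rate for $E_1$. You close the source term by Cauchy--Schwarz against the dissipation and then Young's inequality, arriving at a differential inequality of the schematic form
\[
\frac{d}{dt}\,\E\sum_i \Phi_i \;\le\; C\,\E\sum_i \Phi_i \;+\; CN.
\]
With $\Phi_i(t_m)=0$ this only gives $\E\sum_i\Phi_i(t)\le CN(t-t_m)$, a linear — not quadratic — rate. Your explanation that ``the $CN$ term is integrated twice because $\Phi$ vanishes at $t_m$'' is not a valid step: Grönwall with a constant source and zero initial data produces exactly one power of $(t-t_m)$, regardless of the vanishing initial condition. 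The paper avoids this loss by \emph{not} absorbing into the dissipation via Young's inequality. Instead it keeps the bilinear structure and applies Hölder's inequality so that the source contribution to $\frac{d}{dt}E_1$ factorizes as $(E_1)^{1/2}$ times an integral that is bounded uniformly by Corollary~\ref{cor:expectedparticledensity}. This yields $\frac{d}{dt}E_1\lesssim\sqrt{E_1}\sqrt{N}$, i.e.\ $\frac{d}{dt}\sqrt{E_1}\lesssim\sqrt{N}$, and hence $E_1(t)\le CN(t-t_m)^2$. The square root is the essential mechanism for the quadratic rate, and your argument omits it.

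Related, your explanation of why $E_2$ only achieves a linear rate is also off. The reason is not ``the differentiated source is $O(1)$ immediately''; it is specifically that one term in the $E_2$ equation (the piece \eqref{eq:localaux2}, $\int\sum_k(\delta f_i/(f+\tilde f))\cdot(\delta a^k\tilde f)_{i,k}$) would require a third-derivative bound on $\log f_m^{N,\theta}$ if one wanted to keep the $\sqrt{E_2}$ factorization; since such a bound is not available, the paper integrates by parts and absorbs into the dissipation, producing a standalone $CN$ term and thereby downgrading the Grönwall input to $\frac{d}{dt}E_2\le C\sqrt{E_2}\sqrt{N}+CN$. That is the precise source of the $(t-t_m)$ instead of $(t-t_m)^2$.

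Finally, a minor point: the coupling count is not ``probability $O(1/N)$ per fixed pair summed over $i$.'' The source is nonzero for \emph{all but one} choice of $\tilde\theta_i(i)$ (probability $(N-2)/(N-1)\approx 1$, not $1/N$), but for each $i$ it only activates $O(1)$ coordinates ($k=i$ and $k=\tilde\theta_i(i)$), so summing over $i$ gives $O(N)$ by counting coordinates, not by a probabilistic factor. This is what the paper's computation with $k\in\{i,\tilde i\}$ captures.
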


\begin{proof}
Denote
\[
E_1(t):=\E \sum_{i=1}^N \int_{\R^{Nd}}\frac{|f_m^{N,\theta}-f_m^{N, \tilde{\theta}_i}|^2}{f_m^{N,\theta}+f_m^{N, \tilde{\theta}_i}},
\quad E_2(t):=\E \sum_{i=1}^N \int_{\R^{Nd}}\frac{|\nabla_{v_i}f_m^{N,\theta}-\nabla_{v_i}f_m^{N, \tilde{\theta}_i}|^2}{f_m^{N,\theta}+f_m^{N, \tilde{\theta}_i}}.
\]
Clearly, $E_1(t_m)=0$ and $E_2(t_m)=0$. Below, we will basically show that
\[
\frac{d}{dt}E_{\ell}(t)\lesssim \sqrt{E_{\ell}(t)}\sqrt{N},
\quad \ell=1,2.
\]
This will then give the result.

For the purpose, we first fix $i$ and denote
\[
f:= f_m^{N, \theta}, \quad \tilde{f}:=f_m^{N, \tilde{\theta}_i}.
\]
We use the same conventions as in Section~\ref{subsec:proofparticledensity} to simplify the notations for derivatives.
Also denote $\delta f:= f-\tilde{f}$ and $\delta a^i=a^i-\tilde{a}^i$. 
Then, it is straightforward to compute
\begin{multline*}
\frac{d}{dt}\int_{\R^{Nd}} \frac{|\delta f|^2}{f+\tilde{f}}
=-2\int_{\R^{Nd}} \sum_{k=1}^{N/2} (\frac{\delta f}{f+\tilde{f}})_k \otimes (\frac{\delta f}{f+\tilde{f}})_k: a^k (f+\tilde{f})\\
+4\int_{\R^{Nd}} \sum_{k=1}^{N/2} \frac{\delta f}{f+\tilde{f}}\big(\frac{f}{f+\tilde{f}} [(\delta a^k\tilde{f})_{,k}+\delta b^k f]\big)_{,k}.
\end{multline*}
The first term is negative. Then, we can take the summation over $i$ and take the expectation.
Applying the H\"older inequality, one obtains
\[
\frac{d}{dt}E_1(t)
\le 4(E_1(t))^{1/2}
\left(\E \sum_{i=1}^N \int_{\R^{Nd}} \frac{1}{f+\tilde{f}} \left(\sum_{k=1}^{N/2} \Big(\frac{f}{f+\tilde{f}} [(\delta a^k\tilde{f})_{,k}+\delta b^k f]\Big)_{,k}\right)^2 \right)^{1/2}.
\]
We now estimate the term in the parenthesis.
By the coupling between $\theta$ and $\tilde{\theta}_i$, $\delta a^k$
is nonzero only for $k=i$ and $k=\tilde{\theta}_i(i)=:\tilde{i}$ (if $\tilde{\theta}_i(i)=\theta(i)$, then $\delta a^k=0$ for all $k$). Hence,
\begin{multline*}
\left|\sum_{k=1}^{N/2} \big(\frac{f}{f+\tilde{f}} [(\delta a^k\tilde{f})_{,k}+\delta b^k f]\big)_{,k} \right|
\le C\Big(1+|\tilde{f}_{ii}|+|f_i|+|\tilde{f}_i|+\frac{|f_i|^2+|\tilde{f}_i|^2}{f+\tilde{f}}\\
+|\tilde{f}_{\tilde{i}\tilde{i}}|+|f_{\tilde{i}}|+|\tilde{f}_{\tilde{i}}|
+\frac{|f_{\tilde{i}}|^2+|\tilde{f}_{\tilde{i}}|^2}{f+\tilde{f}} \Big).
\end{multline*}
Hence,
\begin{multline*}
\E \sum_{i=1}^{N} \int_{\R^{Nd}} \frac{1}{f+\tilde{f}} (\sum_{k=1}^{N/2} \big(\frac{f}{f+\tilde{f}} [(\delta a^k\tilde{f})_{,k}+\delta b^k f]\big)_{,k})^2 \\
\le C\E \sum_{i=1}^N \int \frac{1+|\tilde{f}_{ii}|^2+|f_i|^4+|\tilde{f}_i|^4}{f+\tilde{f}}
+C\sum_{i=1}^N \E \int \frac{1+|\tilde{f}_{\tilde{i}\tilde{i}}|^2+|f_{\tilde{i}}|^4+|\tilde{f}_{\tilde{i}}|^4}{f+\tilde{f}}
\end{multline*}

By Lemma \ref{lmm:onestepstability}, one finds that
\[
\int_{\R^{Nd}} \frac{|\tilde{f}_{\tilde{i}\tilde{i}}|^2}{f+\tilde{f}}(t)
\le C\left(\int_{\R^{Nd}} |(\log \tilde{f})_{\tilde{i}\tilde{i}}|^2\tilde{f}
+\int_{\R^{Nd}}  |(\log \tilde{f})_{\tilde{i}}|^4\tilde{f}  +\Delta t\right)|_{t=t_m}
\]
The point is that 
$\tilde{f}(t_m, \cdot)=f^{N}(t_m, \cdot)$ by \eqref{eq:initialvalueoffm} and thus is independent of $\theta$ and $\tilde{\theta}_i$. The random index $\tilde{i}$ is decoupled from $\tilde{f}$. Then, one has, for example,
\[
\E \int_{\R^{Nd}} |(\log \tilde{f})_{\tilde{i}\tilde{i}}|^2\tilde{f}(t_m)
\le C\frac{1}{N-1}\sum_{j=1}^N \int_{\R^{Nd}}  |\nabla_{v_j}^2\log f^N (t_m)|^2 f^N(t_m).
\]
 The other terms can be treated similarly. Then, applying Corollary \ref{cor:expectedparticledensity},  one obtains
\[
\frac{d}{dt}\sqrt{E_1(t)}\le C\sqrt{N} \Rightarrow E_1(t)\le CN (t-t_m)^2.
\]

For $E_2(t)$, we again fix $i$ and compute that
\begin{multline}\label{eq:E2derivative}
\frac{d}{dt}\int_{\R^{Nd}} \frac{|\delta f_{i}|^2}{f+\tilde{f}}
=-2\int_{\R^{Nd}} \sum_{k=1}^{N/2} (\frac{\delta f_i}{f+\tilde{f}})_k\cdot (\frac{\delta f_i}{f+\tilde{f}})_k:a^k (f+\tilde{f})\\
+2\int_{\R^{Nd}} \sum_{k=1}^{N/2} (\frac{\delta f_i}{f+\tilde{f}}) \cdot ( [a_i^k \delta f]_{,k}
+b_i^k \delta f+(\delta a^k \tilde{f})_{i,k}+(\delta b^k \tilde{f})_i )_{,k}\\
+2\int_{\R^{Nd}} \frac{\delta f_i}{f+\tilde{f}}\sum_{k=1}^{N/2} (\frac{\delta f_i}{f+\tilde{f}})_k
\cdot[-(\delta a^k \tilde{f})_{,k} -\delta b^k \tilde{f} ].
\end{multline}
For the third term, by Young's inequality, one has
\begin{multline*}
\int_{\R^{Nd}} \frac{\delta f_i}{f+\tilde{f}}\sum_{k=1}^{N/2} (\frac{\delta f_i}{f+\tilde{f}})_k
\cdot[-(\delta a^k \tilde{f})_{,k} -\delta b^k \tilde{f} ]\\
\le \epsilon \int_{\R^{Nd}} \sum_{k=1}^{N/2} |(\frac{\delta f_i}{f+\tilde{f}})_k|^2 (f+\tilde{f})
+C\int_{\R^{Nd}} \sum_{k=1}^{N/2} \frac{|\delta f_i|^2}{|f+\tilde{f}|^2} 
\frac{|(\delta a^k \tilde{f})_{,k} +\delta b^k \tilde{f}|^2}{f+\tilde{f}}.
\end{multline*}
The first term can be absorbed.  Taking expectation and the sum over $i$, applying the H\"older's inequality,
one has
\begin{multline*}
\sum_{i=1}^N \E \int_{\R^{Nd}} \sum_{k=1}^{N/2} \frac{|\delta f_i|^2}{|f+\tilde{f}|^2} 
\frac{|(\delta a^k \tilde{f})_{,k} +\delta b^k \tilde{f}|^2}{f+\tilde{f}} \le\\
 \left(\sum_{i=1}^N \E \int_{\R^{Nd}} \frac{|\delta f_i|^2}{f+\tilde{f}}\right)^{1/2}
\left(\sum_{i=1}^N \E \int_{\R^{Nd}} \left(\frac{|\delta f_i|}{|f+\tilde{f}|}
\sum_k \frac{|(\delta a^k \tilde{f})_{,k} +\delta b^k \tilde{f}|^2}{|f+\tilde{f}|^2}\right)^2(f+\tilde{f}) \right)^{1/2}.
\end{multline*}
The term in the second parenthesis can be controlled.
Here, we have split a portion of $|\delta f_i|$ into the second parenthesis, without using its smallness. This is fine since the third term on the right-hand side of \eqref{eq:E2derivative} is intuitively second order small, smaller than the main error term in the second term.

For the second term on the right-hand side of \eqref{eq:E2derivative}, the main term is
\begin{gather}\label{eq:localaux2}
\int_{\R^{Nd}} \sum_{k=1}^{N/2} (\frac{\delta f_i}{f+\tilde{f}}) \cdot (\delta a^k \tilde{f})_{i,k}
\end{gather}
Other terms can be estimated similarly as $E_1$.
For this term, since we do not have the third order derivative estimate, we perform integration by parts and 
obtain
\begin{multline*}
-\int_{\R^{Nd}} \sum_{k=1}^{N/2} (\frac{\delta f_i}{f+\tilde{f}})_k \cdot (\delta a^k \tilde{f})_{i}
\le \epsilon \int_{\R^{Nd}} \sum_{k=1}^{N/2} \left|(\frac{\delta f_i}{f+\tilde{f}})_k\right|^2(f+\tilde{f})
+C \int_{\R^{Nd}} \sum_{k=1}^{N/2} \frac{|(\delta a^k \tilde{f})_{i}|^2}{f+\tilde{f}}.
\end{multline*}
The first term is absorbed by the dissipation term. Taking the expectation and summation over $i$, the second term is controlled by 
$CN$. One finally obtains that
\[
\frac{d}{dt}E_2(t)\le C\sqrt{E_2}\sqrt{N}+CN.
\]
This implies that
\[
E_2(t)\le CN(t-t_m).
\]
\end{proof}

If we have an estimate for the third order derivatives similar as the second order derivative in \eqref{eq:integralsecondorder}, 
the term \eqref{eq:localaux2} may be controlled by $\sqrt{E_2}\sqrt{N}$. Using this, the local error bound could be improved
to $CN(t-t_m)^2$ which should be optimal. The estimate of the third order derivatives seems quite tricky and we would like to leave this for further study in the future.

\section{Closing the loose ends}\label{sec:looseends}

In this section, we provide the details and proofs that we skip in the previous sections. 
In particular, we provide the sketch of the proof of Lemma \ref{lmm:concentration}, and then fill in the details for the calculation of the mean field limit in the diffusion coefficient.

\subsection{The concentration estimate}\label{subsec:newlargedevi}

In this section, we outline the proof of Lemma \ref{lmm:concentration}. This is a consequence of the concentration of sub-Gaussians, and related to the large deviation estimate in \cite[Theorem 4]{jabinquantitative}.

%{\color{blue}
%\begin{lemma}
%Consider $\rho\in \mathcal{P}(E)$ and  $\psi(x)$ satisfying 
%$\int_{E} \psi(x)\rho(x)dx =0$ and
%\begin{gather*}
%\gamma=C\Big( \sup_{p\ge 1} \frac{1}{p}\|\psi^2\|_{L^p(\rho)}\Big)^2<1
%\end{gather*}
%with the same universal constant. 
%Then
%\begin{gather}
%\sup_{N\geq 3} \int_{E^N} \exp\bigg(\frac{1}{N} \Big|\sum_{i=1}^N \psi(x_i)\Big|^2\bigg)\rho^{\otimes N}\mathrm{d}\mathrm{x}\leq \frac{2}{1-\gamma}<\infty.
%\end{gather}
%\end{lemma}
%}

\begin{proof}[Proof of Lemma \ref{lmm:concentration}]

Let $\{X_i\}_{i=1}^N$ be i.i.d. drawn from $\bar{f}(t,\cdot)$. By the assumption on $\psi$, the standard Hoeffding's inequality
\cite{vershynin2018high} gives that
\begin{gather*}
\P\left(\Big|\sum_{i=1}^N \psi(X_i)\Big| \ge y\right)\le 2\exp\left(-\frac{c_* y^2}{\sum_{i=1}^N \|\psi(X_i)\|_{\psi_2}}\right),
\quad \forall y\ge 0.
\end{gather*}

Let
\[
Y=\Big|\sum_{i=1}^N \psi(X_i)\Big|.
\]
Then, we have
\begin{multline*}
\E \exp\left(\frac{1}{N}Y^2\right)= \int_0^{\infty}\P(Y\ge y) \frac{2y}{N}\exp\left(\frac{y^2}{N}\right) dy\\
\le \int_0^{\infty}2\exp\left(-\frac{c_* y^2}{N\|\psi(X_1)\|_{\psi_2}}\right) \frac{2y}{N}\exp\left(\frac{y^2}{N}\right) dy.
\end{multline*}
Since $c_*/\|\psi(X_1)\|_{\psi_2}>1$, the integral then converges and the value is independent of $N$.
\end{proof}

\subsection{Mean field approximation of the diffusion coefficient}\label{subsec:meandiffusion}

In this subsection, we focus on the term involving the mean field approximation of the diffusion coefficient in the large particle number limit. In particular,
\begin{gather*}
\begin{split}
I_{22}&=\sum_{i=1}^N \int_{\R^{Nd}} |\nabla_{v_i}\log\bar{f}^N|^2 \Big\|\frac{1}{N-1}\sum_{j\neq i}A(v_i-v_j)-A(v_i; \bar{f}) \Big\|^2 f^N\\
&=N \int_{\R^{Nd}} |\nabla_{v_1}\log\bar{f}(v_1)|^2 \Big\|\frac{1}{N-1}\sum_{j\neq 1}A(v_1-v_j)-A(v_1; \bar{f}) \Big\|^2 f^N.
\end{split}
\end{gather*}
Our main goal is to prove the following.
\begin{proposition}\label{pro:I22}
Consider the quantity $I_{22}$. Suppose Assumptions \ref{ass:KA1}, \ref{ass:A2} and \ref{ass:initial} hold. Then, for fixed $T>0$,
there exists $\eta_*>0$ such that for all $\eta<\eta_*$, it holds that for all $t\le T$ that
\begin{gather}
I_{22}\le CH(f^N | \bar{f}^N)+C,
\end{gather}
where $C$ is independent of $N$ and $t$ (it may depend on $T$).
\end{proposition}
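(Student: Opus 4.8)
The plan is to reduce the claim to a uniform exponential-moment bound by means of the change-of-base-measure inequality of Lemma~\ref{lmm:youngIneq}. Apply it with $\mu=f^N$, $\mu'=\bar f^N=\bar f^{\otimes N}$ and the test function
\[
\Phi_N(v):=N\,|\nabla_{v_1}\log\bar f(t,v_1)|^2\,\Big\|\frac{1}{N-1}\sum_{j\neq1}A(v_1-v_j)-A(v_1;[\bar f])\Big\|^2 ,
\]
so that $I_{22}=f^N(\Phi_N)$ and hence, for every $\eta>0$,
\[
I_{22}\le \frac{1}{\eta}\,H(f^N\,|\,\bar f^N)+\frac{1}{\eta}\log\int_{\R^{Nd}}e^{\eta\Phi_N}\,\bar f^{\otimes N}.
\]
It then suffices to prove that for all $\eta$ below a threshold $\eta_*=\eta_*(T)$ the last exponential moment is bounded by a constant depending only on $T$, uniformly in $N$ and in $t\le T$.

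To bound that exponential moment I would condition on $v_1$: under $\bar f^{\otimes N}$ the variables $v_2,\dots,v_N$ are i.i.d.\ with law $\bar f(t,\cdot)$, and I set $\psi_{v_1}(v):=A(v_1-v)-A(v_1;[\bar f])$, which is centered under $\bar f$ since $A(v_1;[\bar f])=\int A(v_1-y)\bar f(t,y)\,dy$, and satisfies $\|\psi_{v_1}(v)\|\le 2\|A\|_\infty$. The decisive point is a decay estimate in $v_1$: using Assumption~\ref{ass:A2} (so $\|\nabla A(\xi)\|\le C(1+|\xi|)^{-1}$) together with the Gaussian tail of $\bar f$ from Lemma~\ref{lmm:taildensity} (which makes $\bar f$ concentrate near the origin, so that $\|A(v_1;[\bar f])-A(v_1)\|\le C(1+|v_1|)^{-1}$, the $\bar f$-rare event $|y|\gtrsim|v_1|$ being absorbed into $C$), one obtains
\[
\|\psi_{v_1}(v)\|\le \frac{C}{1+|v_1|}\,(1+|v|)\qquad\text{for all }v,v_1 .
\]
Since $1+|v|$ is sub-Gaussian under $\bar f$ by the same tail bound, this yields $\|\psi_{v_1}\|_{\psi_2,\bar f}\le C(1+|v_1|)^{-1}$. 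On the other hand Theorem~\ref{pro:gradlogdensity}(ii), valid precisely under Assumptions~\ref{ass:KA1} and~\ref{ass:A2}, gives $|\nabla_{v_1}\log\bar f(t,v_1)|\le C(1+|v_1|)$ with $C=C(T)$. Multiplying the two bounds, the unbounded weight in $\Phi_N$ is tamed:
\[
|\nabla_{v_1}\log\bar f(t,v_1)|^2\,\|\psi_{v_1}\|_{\psi_2,\bar f}^2\le C_0,
\]
with $C_0=C_0(T)$ independent of $v_1$, $N$ and $t\le T$.

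With this uniform control I would fix $v_1$ and apply the concentration estimate of Lemma~\ref{lmm:concentration} to the $\bar f$-centered function $\phi_{v_1}(v):=\sqrt{\eta N/(N-1)}\;|\nabla_{v_1}\log\bar f(t,v_1)|\;\psi_{v_1}(v)$, whose Orlicz norm obeys $\|\phi_{v_1}\|_{\psi_2,\bar f}\le\sqrt{2\eta C_0}$ for $N\ge2$. Choosing $\eta_*:=c_*^2/(2C_0)$ with $c_*$ the Hoeffding/Talagrand constant recalled before Lemma~\ref{lmm:concentration}, every $\eta<\eta_*$ gives $\|\phi_{v_1}\|_{\psi_2,\bar f}<c_*$ uniformly in $v_1$; since the bound in Lemma~\ref{lmm:concentration} depends on the summand only through an upper bound on its Orlicz norm, the resulting constant is uniform in $v_1$ and in the sample size $N-1$. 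A direct computation gives $\frac{1}{N-1}\big|\sum_{j=2}^{N}\phi_{v_1}(v_j)\big|^2=\eta\,\Phi_N$, hence
\[
\int_{\R^{(N-1)d}}e^{\eta\Phi_N}\,\bar f^{\otimes(N-1)}(dv_2\cdots dv_N)\le C'(T)
\]
for all $v_1$ and $N$; integrating against $\bar f(t,v_1)\,dv_1$ yields $\int e^{\eta\Phi_N}\bar f^{\otimes N}\le C'(T)$, and substituting back gives $I_{22}\le \eta^{-1}H(f^N|\bar f^N)+\eta^{-1}\log C'(T)$, which is the claim.

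I expect the genuinely delicate step to be the decay estimate $\|\psi_{v_1}(v)\|\lesssim(1+|v_1|)^{-1}(1+|v|)$ and, more conceptually, the fact that it is indispensable: the weight $|\nabla_{v_1}\log\bar f|^2$ really grows like $|v_1|^2$, so a naive application of the concentration lemma conditioned on $v_1$ would break down for large $|v_1|$ (the conditional exponential moment stays finite but blows up as $N\to\infty$). What rescues the argument is the exact cancellation between the $(1+|v_1|)$ growth of $|\nabla_{v_1}\log\bar f|$ produced by the Li--Yau estimate of Theorem~\ref{pro:gradlogdensity}(ii) and the $(1+|v_1|)^{-1}$ decay of $\|A(v_1-\cdot)-A(v_1;[\bar f])\|_{\psi_2,\bar f}$ produced by Assumption~\ref{ass:A2}; this is exactly why both assumptions are needed here.
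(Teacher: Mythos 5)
Your proof is correct and follows essentially the same route as the paper: change of base measure via Lemma~\ref{lmm:youngIneq}, the Li--Yau bound from Theorem~\ref{pro:gradlogdensity}(ii) to control $|\nabla_{v_1}\log\bar f|$, the decay estimate on $A(v_1-\cdot)-A(v_1;[\bar f])$ coming from Assumption~\ref{ass:A2} (together with boundedness of $A$ for the range $|v|\gtrsim|v_1|$), the Gaussian tail from Lemma~\ref{lmm:taildensity}, and finally the concentration estimate of Lemma~\ref{lmm:concentration}. The only difference is presentational (you make the conditioning on $v_1$ and the $(1+|v_1|)\cdot(1+|v_1|)^{-1}$ cancellation explicit, whereas the paper folds the gradient weight into $\psi$ and checks $\sup_{v_1}|\psi(v_j;v_1)|\le C(1+|v_j|)$ directly via a case split).
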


\begin{proof}

By Theorem~\ref{pro:gradlogdensity}, we find that if $A$ satisfies Assumptions \ref{ass:KA1} and \ref{ass:A2} and the initial density has the Gaussian tail as in Assumption \ref{ass:initial}, then
\begin{gather}\label{eq:sec6aux1}
|\nabla_{v_1}\log\bar{f}(v_1)| \le C(1+|v_1|).
\end{gather}
Then, our goal is thus to control
\begin{multline*}
\int_{\R^{Nd}} |\nabla_{v_1}\log\bar{f}(v_1)|^2 \Big\|\frac{1}{N-1}\sum_{j\neq 1}A(v_1-v_j)-A(v_1; \bar{f}) \Big\|^2 f^N \\
= \int_{\R^{Nd}} \Big\|\frac{1}{N-1}\sum_{j\neq 1}\psi(v_j; v_1) \Big\|^2 f^N=:A.
\end{multline*}
with
\begin{gather}
\psi(v_j; v_1):=|\nabla_{v_1}\log\bar{f}(v_1)|(A(v_1-v_j)-A(v_1; \bar{f})).
\end{gather}
By Lemma \ref{lmm:youngIneq}, one has
\begin{gather*}
A\le \frac{1}{(N-1)\eta}H(f^N |\bar{f}^N)
+\frac{1}{(N-1)\eta}\log \int_{\R^{Nd}} \exp\left(\frac{\eta}{N-1} \Big|\sum_{j\neq 1}\psi(v_j; v_1) \Big|^2\right)\bar{f}^N.
\end{gather*}

Next, we estimate $\psi(\cdot ; v_1)$. In particular, we show that it has a $\|\cdot\|_{\psi_2}$ norm that is uniform with respect to 
$v_1$. In fact, one has
\begin{gather}\label{eq:sec6aux2}
\sup_{v_1}|\psi(v_j; v_1)|\le C(1+|v_j|).
\end{gather}
To see this, we first split
\begin{gather*}
A(v_1-v_j)-A(v_1; \bar{f})=A(v_1-v_j)-A(v_1)-\left(\int_{\R^d} (A(v_1-v_j')-A(v_1))\bar{f}(v_j')dv_j'\right).
\end{gather*}
Using \eqref{eq:sec6aux1}, one finds that
\begin{multline*}
(1+|v_1|) |A(v_1-v_j)-A(v_1)|\\
\le \begin{cases}
C(1+|v_j|), & |v_j|\ge |v_1|/2,\\
C (1+|v_1|)\sup_{s\in [0, 1]}|\nabla A(v_1-s v_j)| |v_j|\le C|v_j|, & |v_j|<|v_1|/2.
\end{cases}
\end{multline*}
The second case follows from Assumption \ref{ass:A2}.
Hence, \eqref{eq:sec6aux2} holds.

By Lemma \ref{lmm:taildensity}, we can find $\eta_*>0$ such that for all $\eta<\eta_*$, one has
\begin{gather}
\sup_{t\le T}\|\sqrt{\eta}\psi(\cdot; v_1)\|_{\bar{f}(t), \psi_2}<c_*.
\end{gather}
An application of Lemma \ref{lmm:concentration} gives the result.
\end{proof}

\section{Conclusions}\label{sec:conclusion}

In this work, we have proposed a new particle system for Landau-type equations, inspired by a microscopic collision model, which maintains a strong connection to the underlying physics. This new particle system appears as a random batch system with interaction in the diffusion coefficient. Although our particle system is not merely a numerical method, it offers the advantage of computational complexity scaling as $O(N)$ when employed as such.

We have provided a quantitative analysis of the approximation of the particle system to Landau-type equations using the tool of relative entropy. The main technical challenge in our proof involves estimating the derivatives of logarithimic densities for the particle system, particularly ensuring that these estimates scale linearly with the particle number. The estimates for the first derivatives have been addressed by introducing a new advection term. A tail estimate for the Hessian of the logarithmic density of the mean field Landau like model is also needed for the law of large numbers in the diffusion coefficient. Our results indicate that as the frequency of collisions increases and the particle number grows, the system exhibits more chaos, thereby justifying the molecular chaos in a certain sense and the mean-field limit of the system.

For future work, we plan to explore the numerical properties of this new particle system when used as a numerical method for solving the actual Landau models. Moreover, investigating the approximation error of this new particle system when applied to true Landau models is an intriguing question. Furthermore, the collision model at random times could be considered. These studies may offer deeper insights into the molecular chaos of microscopic models for Landau equations.

\section*{Acknowledgement}

This work was financially supported by the National Key R\&D Program of China, Project Number 2021YFA1002800.
The work of K. Du was partially supported by the National Natural Science Foundation of China (12222103) and by the National Science and Technology Major Project (2022ZD0116401).
The work of L. Li was partially supported by NSFC 12371400 and 12031013,  Shanghai Municipal Science and Technology Major Project 2021SHZDZX0102, the Strategic Priority Research Program of Chinese Academy of Sciences, Grant No. XDA25010403.

\appendix

\section{Some detailed computation}\label{app:missingdetails}

The objective function can be written as
\begin{gather}
u^k= e^h \sum_{\alpha\in \mathcal{I}_k} \varphi(h_{\alpha})
\end{gather}
where $\varphi(x)=|x|^p$.

Direct computation gives 
\begin{multline*}
u_t^k-\sum_{i=1}^{N/2}((\tilde{a}^i u^k )_{,ii}+(\tilde{b}^i u^k )_{,i})
=u_t^k-\sum_{i=1}^{N/2}(a^i:u^k_{ii}+b^i\cdot u^k_i+c^iu^k )\\
=(\sum_{\alpha\in \cI_k}\varphi(h_{\alpha}))(\partial_t(e^h)-\sum_i a^i:(e^h)_{ii}-\sum_i b^i\cdot(e^h)_i-\sum_i c^i e^h)\\
+\sum_{\alpha\in \cI_k} \nabla\varphi(h_{\alpha})\cdot (\partial_t h_{\alpha}-2\sum_i a^i: h_i\otimes h_{\alpha i}
-\sum_i a^i h_{\alpha ii}-\sum_i b^i\cdot h_{\alpha i})\\
-\sum_{\alpha\in \cI_k} \nabla^2\varphi(h_{\alpha}):\sum_i a^i: h_{i\alpha}\otimes h_{i\alpha}.
\end{multline*}
It is clear that 
\[
\partial_t(e^h)-\sum_i a^i:(e^h)_{ii}-\sum_i b^i\cdot(e^h)_i-\sum_i c^i e^h=0
\]
by the equation of $h$. In fact, this is just the Fokker-Planck equation for $e^h$.
Moreover, 
\begin{multline*}
\partial_t h_{\alpha}-2\sum_i a^i: h_i\otimes h_{\alpha i}
-\sum_i a^i h_{\alpha ii}-\sum_i b^i\cdot h_{\alpha i} \\
=(\partial_t h-\sum_i a^i:(h_{ii}+h_i\otimes h_i)-\sum_i b^i h_i-\sum_i c^i)_{\alpha}\\
+a^k_{\alpha}:(h_{kk}+h_k\otimes h_k)+b^k_{\alpha}\cdot h_k+c^k_{\alpha}.
\end{multline*}
Hence, one has
\begin{multline}\label{eq:appderivative1}
u_t^k-\sum_{i=1}^{N/2}((\tilde{a}^i u^k )_{,ii}+(\tilde{b}^i u^k )_{,i})
=
\sum_{\alpha\in \cI_k} \nabla\varphi(h_{\alpha})\cdot (a^k_{\alpha}:(h_{kk}+h_k\otimes h_k)+b^k_{\alpha}\cdot h_k+c^k_{\alpha})\\
-\sum_{\alpha\in \cI_k} \nabla^2\varphi(h_{\alpha}):\sum_i a^i: h_{i\alpha}\otimes h_{i\alpha}.
\end{multline}
With the concrete expression of $\varphi$, \eqref{eq:eqnforu1} is obtained.

With \eqref{eq:auxadvection}, one finds that
\begin{gather*}
\begin{split}
& u^k_t-\gamma \sum_{\alpha\in \cI_k} (e^h h_{\alpha} |h_{\alpha}|^p)_{,\alpha}-\sum_i (b^i u^k)_i-\sum_i(a^i u^k)_{,ii}\\
& \le -\kappa p(p-1) e^h\sum_{\alpha\in \cI_k} |h_{\alpha}|^{p-2}\sum_i |h_{\alpha i}|^2
-\gamma e^h \sum_{\alpha\in \cI_k} |h_{\alpha}|^{p+2}+\gamma C e^h\sum_{\beta\in \cI_k} |h_{\alpha}|^p |h_{\alpha\alpha}|\\
&+C p e^h(\sum_{\alpha\in \cI_k} |h_{\alpha }|^{p-1})\sum_{\beta\in \cI_k}(|h_{\beta\beta}|+|h_{\beta}|^2+|h_{\beta}|+1),
\end{split}
\end{gather*}
where $C$ is the norm given by the coefficients like $a_{\alpha}^k$, which is independent of $N$.

Using the Young's inequality, one has the controls like
\[
\begin{split}
& \gamma e^h |h_{\alpha}|^p |h_{\alpha\alpha}|
\le \frac{1}{2}\gamma e^h|h_{\alpha}|^{p+2}
+\frac{1}{2}\gamma e^h  |h_{\alpha}|^{p-2}|h_{\alpha\alpha}|^2,\\
& pe^h (\sum_{\alpha\in \cI_k} |h_{\alpha}|^{p-1})(\sum_{\beta\in \cI_k} |h_{\beta}|^2)\le 
2 pe^h (\sum_{\alpha\in \cI_k} |h_{\alpha}|^{p+1})\le \frac{\gamma}{4}e^h\sum_{\alpha}|h_{\alpha}|^{p+2}
+C(\gamma, p) e^h\sum_{\alpha}|h_{\alpha}|^{p}.\\
\end{split}
\]
Then, \eqref{eq:keyeqn1}  is obtained.

\bibliographystyle{plain}
\bibliography{landau}

\begin{thebibliography}{10}

\bibitem{bird1976molecular}
Graeme~Austin Bird.
\newblock Molecular gas dynamics.
\newblock {\em NASA STI/Recon Technical Report A}, 76:40225, 1976.

\bibitem{bobkov2001hypercontractivity}
Sergey~G. Bobkov, Ivan Gentil, and Michel Ledoux.
\newblock Hypercontractivity of {Hamilton--Jacobi} equations.
\newblock {\em Journal de Math{\'e}matiques Pures et Appliqu{\'e}es},
  80(7):669--696, 2001.

\bibitem{bolley2005weighted}
Fran{\c{c}}ois Bolley and C{\'e}dric Villani.
\newblock Weighted {Csisz{\'a}r-Kullback-Pinsker} inequalities and applications
  to transportation inequalities.
\newblock {\em Annales de la Facult{\'e} des sciences de Toulouse:
  Math{\'e}matiques}, 14(3):331--352, 2005.

\bibitem{carrapatoso2016propagation}
Kleber Carrapatoso.
\newblock {Propagation of chaos for the spatially homogeneous Landau equation
  for Maxwellian molecules}.
\newblock {\em Kinet. Relat. Models}, 9:1--49, 2016.

\bibitem{carrillo2024entropy}
J.~A. Carrillo, X.~Feng, S.~Guo, P.-E. Jabin, and Z.~Wang.
\newblock {Relative entropy method for particle approximation of the Landau
  equation for Maxwellian molecules}.
\newblock {\em arXiv preprint arXiv:2408.15035}, 2024.

\bibitem{carrillo2023convergence}
Jos{\'e}~Antonio Carrillo, Matias~G Delgadino, and Jeremy~SH Wu.
\newblock Convergence of a particle method for a regularized spatially
  homogeneous {Landau} equation.
\newblock {\em Mathematical Models and Methods in Applied Sciences},
  33(05):971--1008, 2023.

\bibitem{carrillo2024mean}
Jos{\'e}~Antonio Carrillo, Shuchen Guo, and Pierre-Emmanuel Jabin.
\newblock Mean-field derivation of {Landau-like} equations.
\newblock {\em Applied Mathematics Letters}, page 109195, 2024.

\bibitem{carrillo2020particle}
Jos{\'e}~Antonio Carrillo, Jingwei Hu, Li~Wang, and Jeremy Wu.
\newblock A particle method for the homogeneous {L}andau equation.
\newblock {\em Journal of Computational Physics: X}, 7:100066, 2020.

\bibitem{carrillo2021random}
Jos{\'e}~Antonio Carrillo, Shi Jin, and Yijia Tang.
\newblock Random batch particle methods for the homogeneous {Landau} equation.
\newblock {\em Commun. Comput. Phys.}, 31(4):997--1019, 2022.

\bibitem{desvillettes1992asymptotics}
Laurent Desvillettes.
\newblock On asymptotics of the {Boltzmann} equation when the collisions become
  grazing.
\newblock {\em Transport Theory and Statistical Physics}, 21(3):259--276, 1992.

\bibitem{desvillettes2000spatially}
Laurent Desvillettes and C{\'e}dric Villani.
\newblock On the spatially homogeneous {L}andau equation for hard potentials
  part i: existence, uniqueness and smoothness.
\newblock {\em Communications in Partial Differential Equations},
  25(1-2):179--259, 2000.

\bibitem{fontbona2009measurability}
Joaquin Fontbona, H{\'e}l{\`e}ne Gu{\'e}rin, and Sylvie M{\'e}l{\'e}ard.
\newblock Measurability of optimal transportation and convergence rate for
  {Landau} type interacting particle systems.
\newblock {\em Probability theory and related fields}, 143(3):329--351, 2009.

\bibitem{fournier2000existence}
Nicolas Fournier.
\newblock Existence and regularity study for two-dimensional {Kac} equation
  without cutoff by a probabilistic approach.
\newblock {\em the Annals of applied probability}, 10(2):434--462, 2000.

\bibitem{fournier2009well}
Nicolas Fournier and H{\'e}lene Gu{\'e}rin.
\newblock Well-posedness of the spatially homogeneous landau equation for soft
  potentials.
\newblock {\em Journal of Functional Analysis}, 256(8):2542--2560, 2009.

\bibitem{fournier2017kac}
Nicolas Fournier and Arnaud Guillin.
\newblock {From a Kac-like particle system to the Landau equation for hard
  potentials and Maxwell molecules}.
\newblock {\em Annales Scientifiques de l'{\'E}cole Normale Sup{\'e}rieure},
  50(1):157--199, 2017.

\bibitem{fournier2016propagation}
Nicolas Fournier and Maxime Hauray.
\newblock Propagation of chaos for the {L}andau equation with moderately soft
  potentials.
\newblock {\em The Annals of Probability}, 44(6):3581--3660, 2016.

\bibitem{fournier2021stability}
Nicolas Fournier and Daniel Heydecker.
\newblock Stability, well-posedness and regularity of the homogeneous {Landau}
  equation for hard potentials.
\newblock {\em Annales de l'Institut Henri Poincar{\'e} C, Analyse non
  lin{\'e}aire}, 38(6):1961--1987, 2021.

\bibitem{guerin2004pointwise}
H{\'e}l{\`e}ne Gu{\'e}rin.
\newblock {Pointwise convergence of Boltzmann solutions for grazing collisions
  in a Maxwell gas via a probabilitistic interpretation}.
\newblock {\em ESAIM: Probability and Statistics}, 8:36--55, 2004.

\bibitem{guerin2003convergence}
H{\'e}l{\`e}ne Gu{\'e}rin and Sylvie M{\'e}l{\'e}ard.
\newblock {Convergence from Boltzmann to Landau processes with soft potential
  and particle approximations}.
\newblock {\em Journal of statistical physics}, 111(3):931--966, 2003.

\bibitem{guillen2023landau}
Nestor Guillen and Luis Silvestre.
\newblock The {Landau} equation does not blow up.
\newblock {\em arXiv preprint arXiv:2311.09420}, 2023.

\bibitem{guo2024convergence}
Qian Guo, Jie He, and Lei Li.
\newblock {Convergence analysis of an explicit method and its random batch
  approximation for the McKean--Vlasov equations with non-globally Lipschitz
  conditions}.
\newblock {\em ESAIM: Mathematical Modelling and Numerical Analysis},
  58(2):639--671, 2024.

\bibitem{huang2024mean}
Zhenyu Huang, Shi Jin, and Lei Li.
\newblock Mean field error estimate of the random batch method for large
  interacting particle system.
\newblock {\em arXiv preprint arXiv:2403.08336}, 2024.

\bibitem{jabin2014}
Pierre-Emmanuel Jabin.
\newblock A review of the mean field limits for {Vlasov equations}.
\newblock {\em Kinetic \& Related Models}, 7(4):661--711, 2014.

\bibitem{jabin2016mean}
Pierre-Emmanuel Jabin and Zhenfu Wang.
\newblock Mean field limit and propagation of chaos for {Vlasov} systems with
  bounded forces.
\newblock {\em Journal of Functional Analysis}, 271(12):3588--3627, 2016.

\bibitem{jabinquantitative}
Pierre-Emmanuel Jabin and Zhenfu Wang.
\newblock Quantitative estimates of propagation of chaos for stochastic systems
  with {$W^{-1,\infty}$} kernels.
\newblock {\em Inventiones mathematicae}, 214(1):523--591, 2018.

\bibitem{jin2022}
Shi Jin and Lei Li.
\newblock On the mean field limit of the {Random Batch Method} for interacting
  particle systems.
\newblock {\em Sci. China Math.}, 65(1):169--202, 2022.

\bibitem{jin2022random}
Shi Jin and Lei Li.
\newblock Random batch methods for classical and quantum interacting particle
  systems and statistical samplings.
\newblock In {\em Active Particles, Volume 3}, pages 153--200. Springer, 2022.

\bibitem{jin2020random}
Shi Jin, Lei Li, and Jian-Guo Liu.
\newblock Random batch methods {(RBM)} for interacting particle systems.
\newblock {\em Journal of Computational Physics}, 400:108877, 2020.

\bibitem{jin2021convergence}
Shi Jin, Lei Li, and Jian-Guo Liu.
\newblock Convergence of the random batch method for interacting particles with
  disparate species and weights.
\newblock {\em SIAM J. Numer. Anal.}, 59(2):746--768, 2021.

\bibitem{li1986parabolic}
Peter Li and Shing-Tung Yau.
\newblock On the parabolic kernel of the {S}chr{\"o}dinger operator.
\newblock {\em Acta Mathematica}, 156:153--201, 1986.

\bibitem{pinsker1964information}
Mark~S Pinsker.
\newblock Information and information stability of random variables and
  processes.
\newblock {\em Holden-Day}, 1964.

\bibitem{sheu1991some}
Shuenn-Jyi Sheu.
\newblock Some estimates of the transition density of a nondegenerate diffusion
  {Markov} process.
\newblock {\em The Annals of Probability}, pages 538--561, 1991.

\bibitem{sznitman1991}
A.~S. Sznitman.
\newblock Topics in propagation of chaos.
\newblock {\em Ecole d'Ete de Probabilites de Saint-Flour XIX - 1989},
  1464:165--251, 1991.

\bibitem{talagrand1989isoperimetry}
Michel Talagrand.
\newblock Isoperimetry and integrability of the sum of independent banach-space
  valued random variables.
\newblock {\em The Annals of Probability}, pages 1546--1570, 1989.

\bibitem{talagrand1996transportation}
Michel Talagrand.
\newblock Transportation cost for {G}aussian and other product measures.
\newblock {\em Geometric \& Functional Analysis GAFA}, 6(3):587--600, 1996.

\bibitem{vershynin2018high}
Roman Vershynin.
\newblock {\em High-Dimensional Probability}.
\newblock Cambridge university press, 2018.

\bibitem{villani1998new}
C{\'e}dric Villani.
\newblock On a new class of weak solutions to the spatially homogeneous
  {Boltzmann and Landau} equations.
\newblock {\em Archive for rational mechanics and analysis}, 143:273--307,
  1998.

\bibitem{villani1998spatially}
C{\'e}dric Villani.
\newblock {On the spatially homogeneous Landau equation for Maxwellian
  molecules}.
\newblock {\em Mathematical Models and Methods in Applied Sciences},
  8(06):957--983, 1998.

\bibitem{villani2002review}
C{\'e}dric Villani.
\newblock A review of mathematical topics in collisional kinetic theory.
\newblock {\em Handbook of mathematical fluid dynamics}, 1(71-305):3--8, 2002.

\end{thebibliography}

\end{document}